\theoremstyle{plain}
\newtheorem{theorem}{Theorem}[section]
\newtheorem{thmx}{Theorem}
\newtheorem{proposition}[theorem]{Proposition}
\newtheorem{lemma}[theorem]{Lemma}
\newtheorem{corollary}[theorem]{Corollary}
\theoremstyle{definition}
\newtheorem{definition}[theorem]{Definition}
\newtheorem{example}[theorem]{Example}
\theoremstyle{remark}
\newtheorem{remark}[theorem]{Remark}
\newcommand{\C}{\mathbb{C}}
\newcommand{\N}{\mathbb{N}}
\newcommand{\R}{\mathbb{R}}
\newcommand{\Z}{\mathbb{Z}}
\newcommand{\fa}{\frak{a}}
\newcommand{\fb}{\frak{b}}
\newcommand{\rmso}{\mathrm{SO}}
\newcommand{\rmu}{\mathrm{U}}
\def\mcal{\mathcal}
\def\frak{\mathfrak}
\def\scr{\mathscr}
\numberwithin{equation}{section} \numberwithin{table}{section}
\begin{document}                                                                          

\title{Lagrangian fibers of Gelfand--Cetlin systems of $\mathrm{SO}(n)$-type}
\author{Yunhyung Cho}
\address{Department of Mathematics Education, Sungkyunkwan University, Seoul, Republic of Korea}
\email{yunhyung@skku.edu}

\author{Yoosik Kim}
\address{Department of Mathematics, Brandeis University, Waltham, MA, USA}
\email{kimyoosik27@gmail.com, yoosik@brandeis.edu}


\begin{abstract}
	In this paper, we study the Gelfand--Cetlin systems and polytopes of the co-adjoint $\mathrm{SO}(n)$-orbits.
	We describe the face structure of Gelfand--Cetlin polytopes and iterated bundle structure of Gelfand--Cetlin fibers in terms of combinatorics on the ladder diagrams. 
	Using this description, we classify all Lagrangian fibers.   
\end{abstract}
\maketitle
\setcounter{tocdepth}{1} 
\tableofcontents

\section{Introduction}
\label{secIntroduction}

Let $G$ be a compact connected Lie group. 
Guillemin and Sternberg \cite{GS_GC} observed that an integrable system of collective Hamiltonian functions constructed by Thimm's trick \cite{Thimm} on a Hamiltonian $G$-manifold $(M, \omega, \mu)$ generates a Hamiltonian torus action on an open dense subset of $M$.  
For instance, this construction produces integrable systems on co-adjoint $G$-orbits by applying the trick to a nested sequence of connected closed subgroups of $G$.
In particular, when $G$ is a unitary group or a special orthogonal group, these integrable systems are \emph{complete}. Guillemin and Sternberg called them \emph{Gelfand--Cetlin systems} (\emph{GC systems} for short) in connection with Gelfand--Cetlin bases for unitary and orthogonal irreducible representations \cite{GC}.
Moreover, the images of Gelfand--Cetlin systems were shown to be convex polytopes determined by inequalities from the so-called ``min-max principle", which are called the \emph{Gelfand--Cetlin polytope} (\emph{GC polytope} for short) \cite{GS_GC}. 

The GC systems and polytopes have been in the spotlight in several branches of mathematics. 
After the relation between the dimension of a weight subspace of the irreducible representation with the highest weight $\lambda$ and the number of integer lattice points of certain patterns in the polytope associated to $\lambda$ was discovered, the GC polytopes started to draw attention in representation theory, see \cite{GC, Zelo, Lit, DMc} for instance.
As toric degenerations of flag varieties and GC systems were constructed in algebraic and symplectic geometry, the GC systems and polytopes have been used to study Schubert calculus, mirror symmetry, and Floer theory of flag varieties in \cite{GL, BCKV, Cal, KM, NNU1} and many others. 
In the theory of Poisson--Lie groups, the GC systems on $\frak{u}(n)^*$ and the dual Poisson--Lie group $\mathrm{U}(n)^*$ have been applied to study Ginzburg--Weinstein diffeomorphisms (see \cite{AM, ALL, FR}).

The main purpose of this article is to understand the GC polytopes and systems in detail in the case where $G$ is a special orthogonal group. 
In the case of unitary groups, the first named author with B. An and J. Kim \cite{ACK} found the correspondence between the faces and certain subgraphs of the \emph{ladder diagram}. 
Using their description of GC polytopes, the authors with Y.-G. Oh  studied Hamiltonian (non-)displaceability of GC fibers of the co-adjoint $\rmu(n)$-orbits and proved that 
every GC fiber is isotropic and is the total space of a certain iterated bundle in \cite{CKO}. 
In particular, they provided a combinatorial dimension formula for the GC fibers and located all Lagrangian fibers over the GC polytopes. 

To extend the works of \cite{ACK, CKO} to the $\rmso(n)$-case, we introduce a certain partially ordered set of \emph{pairs} of subgraphs of a \emph{ladder diagram} of $\rmso(n)$-type and find an order-preserving one-to-one correspondence
between the set of faces of a GC polytope and the above pairs of the corresponding ladder diagram, see Theorem~\ref{thm_diacorrespo}. With the aid of the description, we then express a GC fiber as the total space of a certain 
iterated bundle.

\begin{thmx}[Theorem \ref{thm_fiber}]\label{thmx_main}
Let $\Phi_\lambda$ be the Gelfand--Cetlin system on the co-adjoint orbit $\mcal{O}_\lambda$ of $\lambda \in \frak{so}(n)^*$ equipped with a Kirillov--Kostant--Souriau symplectic form.  
For any point ${\bf{u}}$ in the Gelfand--Cetlin polytope $\Delta_\lambda$, the fiber $\Phi_\lambda^{-1}({\bf{u}})$ is an isotropic smooth submanifold of $\mcal{O}_\lambda$ and is the total space of an iterated bundle
\begin{equation}\label{equ_iteratedthma}
\Phi_\lambda^{-1}({\bf{u}}) = E_{n} \to E_{n-1} \to \cdots \to E_2  = \textup{point}
\end{equation}
such that the fiber at each stage is either a point or a product of spheres.
\end{thmx}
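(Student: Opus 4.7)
The plan is to follow the strategy of the $\mathrm{U}(n)$-case in \cite{CKO}, replacing the $\mathfrak{u}(k)\subset\mathfrak{u}(k+1)$ extension analysis with its $\mathfrak{so}$-analogue and using the new face correspondence of Theorem~\ref{thm_diacorrespo} to organize the combinatorics. Recall that by Thimm's construction $\Phi_\lambda$ arises from the chain $\mathrm{SO}(2)\subset\cdots\subset\mathrm{SO}(n)$ via the partial moment maps $\mu_k\colon\mathcal{O}_\lambda\to\mathfrak{so}(k)^*$, and the level-$k$ coordinates of $\mathbf{u}\in\Delta_\lambda$ single out a coadjoint orbit $\mathcal{O}^{(k)}\subset\mathfrak{so}(k)^*$. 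I would define $E_k$ to be the fiber at $\mathbf{u}|_{\le k}$ of the GC system on the smaller orbit $\mathcal{O}^{(k)}$, so that $E_n=\Phi_\lambda^{-1}(\mathbf{u})$, while $E_2=\mathcal{O}^{(2)}$ is a single point because the coadjoint action of $\mathrm{SO}(2)$ on $\mathfrak{so}(2)^*$ is trivial. The bundle map $p_k\colon E_k\to E_{k-1}$ is the block-restriction $X\mapsto X^{(k-1)}$ sending a skew-symmetric matrix to its upper-left $(k-1)\times(k-1)$ corner.

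The heart of the proof is the identification of $p_k^{-1}(Y)$ for each $Y\in E_{k-1}$. Parameterize a skew-symmetric extension $X\in\mathfrak{so}(k)^*$ of $Y$ by its new row and column---essentially a vector $v\in\mathbb{R}^{k-1}$, augmented by a scalar when $k$ is even---and impose the condition $X\in\mathcal{O}^{(k)}$. Matching the characteristic polynomial of $X$ to the prescribed one, the classical secular equation shows that this amounts to prescribing the squared length of the orthogonal projection of $v$ onto each eigenspace of $Y$. The only remaining freedom is the direction of each nonzero projection inside its eigenspace, which produces a factor $S^{d-1}$ per eigenspace of dimension $d$; hence $p_k^{-1}(Y)$ is a product of spheres, or a point when every direction is forced.

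It remains to check that the multiplicity pattern of $Y$ is locally constant as $Y$ varies over $E_{k-1}$, so that $p_k$ is a genuine fiber bundle, and to establish smoothness of each $E_k$. This is where Theorem~\ref{thm_diacorrespo} enters: the relative interior of the face of $\Delta_\lambda$ containing $\mathbf{u}$ is described by a pair of subgraphs of the ladder diagram, and the equalities recorded by these subgraphs encode exactly the eigenvalue coincidences that fix the relevant multiplicities. Consequently, the eigenspace dimensions are locally constant along $E_{k-1}$, the fiber dimensions of $p_k$ are independent of $Y$, and $E_k$ is smooth. Isotropy of $\Phi_\lambda^{-1}(\mathbf{u})$ is then immediate from the fact that the GC Hamiltonians Poisson-commute, so any common level set is isotropic wherever smooth.

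The main obstacle will be handling the parity dichotomy between $\mathrm{SO}(2m+1)$ and $\mathrm{SO}(2m)$ uniformly: the type-$\mathrm{B}$ and type-$\mathrm{D}$ pictures differ in the role of the forced zero eigenvalue at odd $k$ and in whether conjugate imaginary pairs must be bundled, so the secular equation takes slightly different shapes and certain directions collapse to circles while others rigidify to points. The face correspondence of Theorem~\ref{thm_diacorrespo} is designed precisely to package these cases uniformly; once the dictionary between subgraph pairs and sphere-product data is in place, the sphere dimensions of each fiber can be read directly from the ladder diagram.
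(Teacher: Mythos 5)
Your overall architecture tracks the paper closely: you build the tower by restricting the $m\times m$ matrix to its $(m-1)\times(m-1)$ corner, you identify the fiber of each projection by matching characteristic polynomials and noting that the residual freedom is a product of spheres parameterizing the directions of the eigenspace projections, and you invoke the face correspondence of Theorem~\ref{thm_diacorrespo} to control the multiplicity pattern along each stage. That is exactly the paper's plan (Propositions~\ref{proposition_mainiterated} and~\ref{prop_projbundle}, and the secular-equation analysis in Section~\ref{sectionproofoflemma}).

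The genuine gap is in your isotropy argument. You assert that isotropy is ``immediate from the fact that the GC Hamiltonians Poisson-commute, so any common level set is isotropic wherever smooth.'' This is not a correct general principle. If $f_1,\dots,f_k$ are smooth Poisson-commuting functions, a smooth level set at a regular point is \emph{coisotropic} (the kernel of $dF$ is the symplectic orthogonal of the span of the Hamiltonian vector fields $X_{f_i}$); it is Lagrangian, hence isotropic, only when $k$ equals half the dimension and the differentials are independent. At the singular fibers that are the whole point of the theorem, the GC components are not even smooth, so the Poisson-commutativity argument does not apply at all. The paper's proof of Proposition~\ref{prop_isotropic} is substantively different: it uses the fact that $\mathrm{SO}(m-1)$ acts transitively on each stage $\mcal{O}_{\fa,\fb}$ (Lemma~\ref{lemma_transitive}, coming from multiplicity-freeness) so that tangent vectors to the fiber are represented by fundamental vector fields, and then applies the Kirillov--Kostant--Souriau compatibility statement of Lemma~\ref{lemma_preserve} inductively down the tower until the base $\mcal{O}^{(2)}_{\mathbf{u}_2}$ is a point. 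You would need to import this mechanism, or show separately that each singular fiber is a limit of regular Lagrangian fibers, which is a nontrivial claim you do not make.

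A smaller point: asserting that the eigenvalue multiplicities are locally constant over $E_{k-1}$ does not by itself give a fiber bundle; constant fiber dimension is weaker than local triviality. The paper closes this by combining transitivity of the $\mathrm{SO}(m-1)$-action with equivariance of $\rho_{\fa,\fb}$ (Lemma~\ref{lemma_equivariant_map}) to get constant rank via the equivariant rank theorem, and then invokes Ehresmann's theorem. Your sketch could be repaired along the same lines, but as written the local triviality is not established.
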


The process in Theorem~\ref{thmx_main}, interacting with the face structure of the GC polytope, provides a concrete and detailed description for the GC fiber in response to the location of a fiber in the GC systems on co-adjoint $\mathrm{SO}(n)$-orbits.  
The fiber at each stage will be shown to be a homogeneous space in Section~\ref{secIteratedBundleStructureOfGelfandCetlinFibers} 
which partly explains why the list of diffeomorphism types of fibers at stage in~\eqref{equ_iteratedthma} is short. 

Note that an \emph{even} dimensional sphere factor can appear in a stage of~\eqref{equ_iteratedthma}, while it cannot for $\rmu(n)$-cases (cf. \cite[Theorem A]{CKO}). 
The underlying combinatorics in the ladder diagram for the $\mathrm{SO}(n)$-type is indeed more interesting than the $\mathrm{U}(n)$-type. 
For instance, even if the patterns of inequalities for two points in the polytope are same, whether a component of the GC system becomes zero or not \emph{does} make a distinction on the topology of the GC fiber. 
Also, the combinatorial type of GC polytopes of the $\mathrm{SO}(n)$-type is affected by both the inequality patterns on $\lambda$ and the zeros in $\lambda$, but in the $\mathrm{U}(n)$-case two GC polytopes are combinatorially same as long as their inequality patterns are same.  

For the $\mathrm{U}(n)$-type, the authors with Oh \cite{CKO} and Bouloc--Miranda--Zung \cite{BMZ} independently studied the GC fibers from different motivations. According to \cite[Problem 1.9]{BMMT}, peculiar properties of GC systems was predicted by N. T. Zung. Along the line, for the systems of bending flow on the moduli space of polygons and generalized GC systems on the Grassmannian of complex two planes, Bouloc \cite{Bou} proved that the fibers are smooth and isotropic. 
In this regard, Theorem~\ref{thmx_main} confirms the peculiarity of the GC systems on the $\mathrm{SO}(n)$-orbits.
Lane \cite{Lane1, Lane2} proved that each GC fiber is a smooth manifold and has an iterated bundle structure in a more general situation including the co-adjoint $\mathrm{SO}(n)$-orbits. 
We remark that the iterated bundle description in \cite{Lane2} is slightly different from~\eqref{equ_iteratedthma}.

To maximize efficiency, we enhance the combinatorial process for Theorem~\ref{thmx_main} that enables us to count the dimension of the fiber instantly, which can be used to classify the faces containing Lagrangian fibers in their relative interiors. Roughly speaking, once a face of a GC polytope and the corresponding pair of subgraphs of a ladder diagram are given, 
one can immediately count the dimension of the fiber over the interior of the face by doing certain Tetris game of ``L-blocks'' and ``I-blocks'', see Section \ref{secClassificationOfLagrangianFaces}. 

\begin{thmx}[Corollary \ref{cor_fillable}]\label{thmx_main2}
	Let $f$ be a face of $\Delta_\lambda$ and $\gamma_f$ be the corresponding face of $\Gamma_\lambda$. Then the GC fiber over any point ${\bf u}$ in the relative interior of $f$ 
	is Lagrangian if and only if $\gamma_f$ is fillable by $L$-blocks and $I$-blocks.
\end{thmx}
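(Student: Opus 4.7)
The plan is to reduce the Lagrangian condition to a purely combinatorial dimension count and then match that count against the Tetris description sketched in Section~\ref{secClassificationOfLagrangianFaces}. By Theorem~\ref{thmx_main} the fiber $\Phi_\lambda^{-1}(\mathbf{u})$ over any point in the relative interior of $f$ is isotropic, so it is Lagrangian if and only if
\[
\dim \Phi_\lambda^{-1}(\mathbf{u}) \;=\; \tfrac{1}{2}\dim \mcal{O}_\lambda.
\]
The iterated bundle~\eqref{equ_iteratedthma} expresses the left-hand side as a sum of fiber dimensions at each stage, where each such fiber is either a point or a product of spheres. So the task is to compute this sum in terms of the diagrammatic data $\gamma_f$ and compare it with $\tfrac12\dim\mcal{O}_\lambda$, which itself has a clean interpretation as (twice) the number of appropriate boxes in the ladder diagram $\Gamma_\lambda$ of $\mathrm{SO}(n)$-type.

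First, I would localize the analysis stage-by-stage: the step $E_k \to E_{k-1}$ in~\eqref{equ_iteratedthma} is controlled by the $k$-th ``row" of $\gamma_f$, and the diffeomorphism type of its fiber is read off from a local configuration of edges and vertices in the pair of subgraphs recorded by $\gamma_f$ (via Theorem~\ref{thm_diacorrespo}). The Tetris rules will be designed so that an $L$-block placed in this local configuration corresponds to a sphere factor of one parity and an $I$-block to one of the other parity, with the area of each block equal to the dimension of the sphere it encodes. Consequently, the total number of boxes covered by a valid placement of $L$- and $I$-blocks in $\gamma_f$ equals twice the dimension of the iterated-bundle fiber at that stage; summing over stages, a full tiling of $\gamma_f$ corresponds to the fiber dimension attaining its maximum possible value.

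Second, I would show that this maximum is exactly $\tfrac12 \dim\mcal{O}_\lambda$. The dimension of $\mcal{O}_\lambda$ is encoded in $\Gamma_\lambda$ as twice the number of ``positive" boxes, and the ladder-diagrammatic face correspondence from Theorem~\ref{thm_diacorrespo} cuts out of $\Gamma_\lambda$ precisely the region of $\gamma_f$ in which $L$- and $I$-blocks compete for room. The Tetris procedure produces a packing that is maximal in the strong sense that it is obstructed only by intrinsic shape constraints of $\gamma_f$; thus the fiber dimension equals $\tfrac12\dim\mcal{O}_\lambda$ precisely when these obstructions vanish, i.e.\ when a full tiling exists. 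The ``if'' direction is then a direct computation from a given tiling, and the ``only if'' direction follows by contrapositive from the maximality of the Tetris packing.

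The main obstacle I anticipate is the precise bookkeeping that identifies the fiber at stage $k$ of~\eqref{equ_iteratedthma} with the exact block content assigned to the $k$-th strip of $\gamma_f$. The $\rmso(n)$-setting is strictly more subtle than the $\rmu(n)$-setting of \cite{CKO} for two reasons already flagged in the introduction: even-dimensional spheres can genuinely occur, forcing the introduction of a second block type ($I$-blocks), and the presence of zeros in $\lambda$ can change which block fits where even when the inequality pattern is unchanged. Keeping the dictionary between edge/vertex configurations in $\gamma_f$, sphere factors in the stage fiber, and the allowed $L$/$I$ placements fully consistent in all these cases is where the real work sits; once established, Theorem~\ref{thmx_main} together with the ladder-diagram formula for $\dim\mcal{O}_\lambda$ closes the argument.
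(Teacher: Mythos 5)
Your high-level strategy matches the paper's: reduce the Lagrangian condition to the dimension count $\dim \Phi_\lambda^{-1}({\bf u}) = \tfrac{1}{2}\dim \mcal{O}_\lambda$ (legitimate since Theorem~\ref{thmx_main} gives isotropy), express the left-hand side as a sum over stages of the iterated bundle, and match block areas to sphere dimensions. But several concrete steps are off. There is an internal inconsistency in the bookkeeping: you correctly assert that the area of each block equals the dimension of the sphere it encodes, yet two sentences later claim the total number of boxes covered equals \emph{twice} the fiber dimension at the stage. The correct identity, which is exactly equation~\eqref{equ_dimension} in the paper, is that the total area of the $L$- and $I$-blocks equals the fiber dimension, while $\tfrac{1}{2}\dim_\R \mcal{O}_\lambda^{(n)}$ equals the area of the whole ladder diagram $\Gamma_\lambda^{(n)}$; the extra factor of $2$ breaks the final equality chain. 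Your parity claim is also wrong: an $L_k$-block always detects an odd sphere $\mathbb{S}^{2k-1}$ (via an $M$-block), whereas an $I_\ell$-block detects $\mathbb{S}^\ell$, which is even when $\ell$ is even (via an $N$-block) and odd when $\ell$ is odd. So $I$-blocks are not ``the other parity''; they exist to capture the special behavior of constant strings through zero, and they produce spheres of either parity. Finally, the filling happens in the region $\Gamma_\lambda^{(n)}$ subject to constraints tied to $\gamma_f$ and its base (Definition~\ref{def_LIfillable}); $\gamma_f$ itself is a graph, not a region that can be tiled.

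The more substantial issue is that the step you yourself flag as ``where the real work sits'' is exactly the content of the proof, and it is not done. The dictionary between a valid placement of $L$- and $I$-blocks and the sphere factors of $S_k(f)$ is not a formal consequence of Theorem~\ref{thmx_main}; it requires matching each $L_{k_i}(a_i,b_i)$ with a unique $M_{k_i}$-block in the $W$-block cutting, each $I_{\ell_i}(c_i,d_i)$ with a unique $M$- or $N$-block depending on parity, and then invoking Lemmas~\ref{lemma_levisubgroup}, \ref{lemma_case0}, \ref{lemma_case_odd}, \ref{lemma_case_even} together with the characteristic-polynomial analysis to identify those cut regions with sphere factors of the homogeneous fiber $\scr{L}_\fb/\scr{S}_{\fa,\fb}$. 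Only then does the equality $\dim\Phi_\lambda^{-1}({\bf u}) = \sum_i\operatorname{area}(L_{k_i}) + \sum_j\operatorname{area}(I_{\ell_j})$ hold, from which both directions of the equivalence drop out by comparing with $\operatorname{area}(\Gamma_\lambda^{(n)}) = \tfrac12\dim_\R\mcal{O}_\lambda^{(n)}$. Deferring this dictionary leaves both the ``if'' computation and the ``only if'' maximality argument unsupported.
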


The organization of the paper is in order. In Section~\ref{secGelfandCetlinSystems}, we briefly review to GC systems of the $\mathrm{SO}(n)$-type. In Section \ref{secTheFaceStructureOfGelfandCetlinPolytopes}, we describe the face structures of GC polytopes in terms of combinatorics of the corresponding ladder diagrams. 
After introducing underlying combinatorics in Section~\ref{secTheTopologyOfGelfandCetlinFibers},
we discuss the iterated bundle structure of the GC fibers and the diffeomorphism type of fibers at stages in Section~\ref{secIteratedBundleStructureOfGelfandCetlinFibers}
and~\ref{sectionproofoflemma}.
Finally, Theorem \ref{thmx_main2} and the dimension formula will be derived in Section~\ref{secClassificationOfLagrangianFaces}.

\subsection*{Acknowledgement} 
The authors express our deep gratitude to Yong-Geun Oh for drawing their attention to Gelfand--Cetlin systems. 
The authors would like to thank the referees for their careful, constructive, and detailed comments to improve the quality of the
manuscript greatly. 
This work was initiated when the second named author was affiliated to IBS-CGP and was supported by IBS-R003-D1. The first named author is supported by the National Research Foundation of Korea(NRF) grant funded by the Korea government(MSIP; Ministry of Science, ICT \& Future Planning) (NRF-2017R1C1B5018168).

\section{Gelfand--Cetlin systems}
\label{secGelfandCetlinSystems}

Let $G$ be a compact connected Lie group. 
The Lie algebra of $G$ and its dual are denoted by $\frak{g}$ and $\frak{g}^*$ respectively.
A $G$-action on symplectic manifold $(M,\omega)$ is called {\em Hamiltonian} if
\begin{enumerate}
\item
 there is a smooth map $\mu 
\colon M \rightarrow \frak{g}^*$ such that 
\[
	\omega(\underline{X}, \cdot) = d\mu_X(\cdot) \quad \quad \left(\mu_X := \langle \mu, X \rangle \colon M \rightarrow \R\right)
\]
for each $X \in \frak{g}$ where $\underline{X}$ is the fundamental vector field on $M$ generated by $X$. 
\item $\mu$ is required to be a $G$-equivariant map with respect to the co-adjoint $G$-action on $\frak{g}^*$
\end{enumerate}
Such a function $\mu$ is called a \emph{moment map} (for the $G$-action) and $(M, \omega, \mu)$ is called a \emph{Hamiltonian $G$-manifold}. 

In this section, we review the construction of integrable systems on a Hamiltonian $G$-manifold in \cite{Thimm, GS_GC}, focusing on the \emph{(classical) Gelfand--Cetlin system} on a co-adjoint $\mathrm{SO}(n)$-orbit.

\subsection{Collective periodic Hamiltonians}
\label{ssecCollectivePeriodicHamiltonians}

Let $(M,\omega, \mu)$ be a Hamiltonian $G$-manifold. 
We denote by $C^\infty(M)$ the set of smooth functions on $M$. 
The symplectic structure $\omega$ induces the Poisson structure $\{ \cdot, \cdot \}_M$ defined by 
	\[
		\{ f, g \}_M := \omega(H_f, H_g), \quad \quad f,g \in C^\infty(M)
	\]
	where $H_f$ and $H_g$ denote the Hamiltonian vector fields generated by $f$ and $g$ respectively. On the other hand, the Lie algebra $\frak{g}^*$ also admits the Poisson structure $\{ \cdot, \cdot \}_{\frak{g}^*}$ given by 
	\[
		\{h, k\}_{\frak{g}^*}(\xi) := \langle \xi, [dh_\xi, dk_\xi] \rangle, \quad \quad h, k \in C^\infty(\frak{g}^*)
	\]
	where $dh_\xi \colon T_\xi \frak{g}^* \cong \frak{g}^* \rightarrow \R$ is regarded as an element of $\frak{g} = \mathrm{Hom}(\frak{g}^*, \R)$.
	The moment map $\mu$ is Poisson, i.e., 
	\begin{equation}\label{equation_Poisson}
		\{h, k\}_{\frak{g}^*}(\mu(x)) = \{h \circ \mu, k \circ \mu\}_M(x), \quad \quad x \in M
	\end{equation}
	for every $h, k \in C^\infty(\frak{g}^*)$, see \cite[Proposition III.1.3]{Au} for instance.
	Thus, $h \circ \mu$ and $k \circ \mu$ commute with respect to $\{ \cdot, \cdot \}_M$ if $h$ and $k$ commute with respect to $\{ \cdot, \cdot \}_{\frak{g}^*}$. 
	Recall the following identitiy that will be used in Section \ref{secIteratedBundleStructureOfGelfandCetlinFibers}.

	\begin{lemma}\label{lemma_preserve}
		For any $X, Y \in \frak{g}$ and $x \in M$, we have 
		\begin{equation}\label{equation_multiplicity_free_lemma}
			\omega_x(\underline{X}_x, \underline{Y}_x) = \omega_{\mu(x)}^{\mathrm{KKS}} (d\mu_x(\underline{X}_x), d\mu_x(\underline{Y}_x))
		\end{equation}
		where $\omega^{\mathrm{KKS}}_\xi$ denotes the Kirillov--Kostant--Souriau symplectic form on the co-adjoint orbit $\mcal{O}_\xi \subset \frak{g}^*$. That is, 
		$\omega^{\mathrm{KKS}}_\xi (\underline{X}, \underline{Y}) = \langle \xi, [X,Y] \rangle$ for any $X, Y \in \frak{g}^*$.
	\end{lemma}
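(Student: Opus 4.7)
The plan is to compute both sides of~\eqref{equation_multiplicity_free_lemma} independently and verify that each equals the common value $\langle \mu(x), [X, Y]\rangle$; this is a routine exercise in the moment map formalism, but it is worth laying out cleanly because the identity is used repeatedly later.

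For the left-hand side, I would invoke the defining property of the moment map to rewrite $\omega_x(\underline{X}_x, \underline{Y}_x) = d\mu_X|_x(\underline{Y}_x)$, which is the derivative of the function $\mu_X = \langle \mu, X\rangle$ in the direction $\underline{Y}_x$. The $G$-equivariance of $\mu$ then lets me evaluate this derivative: differentiating the identity $\mu(\exp(tY) \cdot x) = \mathrm{Ad}^*_{\exp(tY)} \mu(x)$ at $t = 0$ and pairing the result with $X$ produces $\langle \mu(x), [X, Y]\rangle$, once the sign convention for the infinitesimal coadjoint action is unwound.

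For the right-hand side, the same equivariance identity, differentiated at $t = 0$ before pairing with anything, shows that $d\mu_x(\underline{Z}_x)$ coincides with the fundamental vector field of $Z$ for the coadjoint $G$-action on $\mathfrak{g}^*$ evaluated at $\mu(x)$, for any $Z \in \mathfrak{g}$. Applying this identification to $Z = X$ and $Z = Y$ and then invoking the definition $\omega^{\mathrm{KKS}}_{\mu(x)}(\underline{X}, \underline{Y}) = \langle \mu(x), [X, Y]\rangle$ supplied in the statement yields the same expression. The only subtle point is the bookkeeping of signs in the infinitesimal coadjoint action and in the KKS convention; once these are pinned down, the identity is forced by the moment map axioms with no further geometric input needed.
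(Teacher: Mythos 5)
Your proof is correct and hinges on the same geometric fact as the paper's: the $G$-equivariance of $\mu$ identifies $d\mu_x(\underline{X}_x)$ with the fundamental vector field $\underline{X}_{\mu(x)}$ for the coadjoint action. The difference is one of bookkeeping. You evaluate both sides of~\eqref{equation_multiplicity_free_lemma} directly down to the common value $\langle \mu(x), [X,Y] \rangle$ --- the left-hand side via $\omega_x(\underline{X}_x, \underline{Y}_x) = d\mu_X(\underline{Y}_x)$ and differentiation of the equivariance identity, the right-hand side via the pushforward identification and the KKS definition. The paper instead reduces both sides to $\{\mu_X, \mu_Y\}_M(x)$: the left-hand side is this Poisson bracket by definition of Hamiltonian vector fields, and the right-hand side is chased through $\langle \mu(x), [X,Y] \rangle = \{f,g\}_{\mathfrak{g}^*}(\mu(x)) = \{f \circ \mu, g \circ \mu\}_M(x)$ by invoking the already-recorded Poisson-map property~\eqref{equation_Poisson}. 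Your route is self-contained and re-derives inline the special case of~\eqref{equation_Poisson} for linear $f, g$; the paper's is marginally shorter because it cites that fact. Either way the sign conventions for $\mathrm{Ad}^*$ and for the KKS form have to be fixed consistently, as you note, but once that is done the two computations agree and both are sound.
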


	\begin{proof}
		Observe that (LHS) of \eqref{equation_multiplicity_free_lemma}
		equals $\{\mu_X, \mu_Y\}_M(x)$. On the other hand, (RHS) of \eqref{equation_multiplicity_free_lemma} is
		\[
			\begin{array}{ccl}
				\mathrm{(RHS)} & = & \omega_{\mu(x)}^{\mathrm{KKS}}( \underline{X}_{\mu(x)}, \underline{Y}_{\mu(x)}) \quad \quad (\text{since $\mu$ is an equivariant map}) \\
						& = & \langle \mu(x), [X,Y] \rangle \\ 
						& = & \langle \mu(x), [df, dg] \rangle \quad \quad (f = \langle \cdot , X \rangle, ~g = \langle \cdot, Y \rangle : \frak{g}^* \rightarrow \R)\\ 
						& = & \{f, g\}_{\frak{g}^*} (\mu(x)) \\ 
						& = & \{ f\circ \mu, g \circ \mu \}_M (x) \quad \quad (\text{by ~\eqref{equation_Poisson}}) \\
						& = & \{\mu_X, \mu_Y \}_M(x).
			\end{array}
		\]
		This finishes the proof.
	\end{proof}
	
We recall the definition of a (completely) integrable system on $(M, \omega)$. 

\begin{definition}\label{definition_cis}
An {\em integrable system} on an $2n$-dimensional symplectic manifold $(M,\omega)$ is a continuous function $(f_1, \cdots, f_k) \colon M \rightarrow \R^k$ $(k \leq n)$ such that 
\begin{itemize}
	\item each $f_i$ is smooth, 
	\item $\{f_i, f_j \}_M = 0$ for every pair $(i,j)$, 
	\item $\{df_1, \cdots, df_k\}$ are linearly independent 
\end{itemize}
on some open dense subset $\mcal{U}$ of $M$. 
An integrable system $(f_1, \cdots, f_k)$ is called {\em completely integrable} if $k = n$. 
\end{definition}

	 A {\em collective Hamiltonian function} is defined as a function on $M$ of the form $h \circ \mu$ for some real-valued smooth function $h$ on $\frak{g}^*$.
Let $C^\infty(\frak{g}^*)^G$ be the set of $G$-invariant smooth functions (under the co-adjoint $G$-action). 
Any $G$-invariant function $h \in C^\infty(\frak{g}^*)^G$ Poisson commutes with any functions in $C^\infty(\frak{g}^*)$.
If $h_1, \cdots, h_k \in C^\infty(\frak{g}^*)^G$, then the corresponding collective Hamiltonian functions $h_1 \circ \mu, \dots, h_k \circ \mu$ are commutative with respect to $\{ \cdot, \cdot \}_M$. 

In order to construct $G$-invariant functions on $\frak{g}^*$, let us fix a Cartan subgroup $T$ of $G$. 
Let $\frak{t}$ and $\frak{t}^*$ be the Lie algebra of $T$ and its dual, respectively. 
Also, we choose a positive Weyl chamber $\frak{t}^*_+ \subset \frak{t}^*_{\vphantom{+}}$. 
Every co-adjoint $G$-orbit intersects 
$\frak{t}_+^*$\footnote{Note that $\frak{t}_+^* \cong \frak{t} / W   \cong \frak{g}^* / \text{Ad}^*(G)$
where $\text{Ad}^*$ denotes the co-adjoint action of $G$ on $\frak{g}^*$ and $W$ denotes the Weyl group of $G$.}
at a single point, see \cite[Lemma 3]{Ki}. Thus, the following map is well-defined$\colon$
\begin{equation}\label{equ_pipi}
	\begin{array}{cccl}
		\Pi \colon & \frak{g}^* & \rightarrow & \frak{t}_+^* \\
			  &   \lambda   &  \mapsto   &\Pi(\lambda) := \left(G\cdot \lambda\right) \cap \frak{t}_+^*.
	\end{array}
\end{equation} 
The map $\Pi$ is surjective, continuous on $\frak{g}^*$, and smooth on $\Pi^{-1}\left(\text{int}(\frak{t}^*_+)\right)$. 
For any $\xi \in \frak{t}$, 
let
\begin{equation}\label{equ_linearfunctionall}
	l_\xi \colon \frak{t}^* \rightarrow \R, \quad l_\xi(\lambda) := \langle \lambda, \xi \rangle.
\end{equation}
The composition $h_\xi := \l_\xi \circ \Pi \colon \frak{g}^* \rightarrow \R$ is then a $G$-invariant smooth function on $\Pi^{-1}\left(\text{int}(\frak{t}^*_+)\right)$, 
which leads to the collective Hamiltonian function $h_\xi \circ \mu$.
In general, the set of those independent collective Hamiltonian functions is \emph{not} enough to form a \emph{completely} integrable system on $(M,\omega)$.

\subsection{Thimm's trick}
\label{ssecRmsoNCases}

To obtain additional independent commuting functions on $\mathfrak{g}^*$, we apply Thimm's trick \cite{Thimm} as follows.
Consider a nested sequence of connected closed Lie subgroups of $G$$\colon$
\begin{equation}\label{equation_nested_sequence}
	G_\bullet := \left\{\langle e \rangle =: G_0 \subset G_{1} \subset \cdots \subset G_{k-1} \subset G_k := G \right\}.
\end{equation}
For each $i=1,\cdots, k$, let $\frak{g}^{\vphantom{*}}_i$ and $\frak{g}_i^*$ be the Lie algebra of $G_i$ and its dual, respectively. 
Choose a Cartan subgroup $T_i$ of $G_i$ and denote by $\frak{t}^{\vphantom{*}}_i$, $\frak{t}_i^*$, and $\frak{t}_{i,+}^*$ the 
Lie algebra of $T_i$, its dual, and a positive Weyl chamber, respectively. 

The Hamiltonian $G$-action on $M$ induces a Hamiltonian $G_i$-action  on $M$ where the composition
\[
	\Phi_i := \kappa_i \circ \mu \colon M \rightarrow \frak{g}_i^* 
\]
is a moment map of the induced $G_i$-action. Here, $\kappa_i \colon \frak{g}^* \rightarrow \frak{g}_i^*$ 
is the transpose of the linear embedding $\frak{g}_i \hookrightarrow \frak{g}$.
As in~\eqref{equ_pipi}, we can also define the map 
$$
\Pi_i \colon \frak{g}^*_i \to \frak{t}_{i,+}^* \quad \quad i=1,\cdots,k.
$$ 
Fix a basis $\{\xi_{ij} ~|~ j=1,\cdots,r_i \}$ (where $r_i$ is the rank of $T_i$)
of the lattice (i.e., the kernel of $\exp \colon \frak{t}_i \rightarrow T_i$) in $\frak{t}_i$, which gives rise to the linear functionals $\{l_{\xi_{ij}} ~|~ j=1,\cdots,r_i \}$ in~\eqref{equ_linearfunctionall}.
Then the composition
\begin{equation}\label{equ_compoGCsys}
		\Phi^{(i)}_j := l_{\xi_{ij}} \circ \Pi_i \circ \Phi_i = l_{\xi_{ij}} \circ \Pi_i \circ \kappa_i \circ \mu \colon M \to \frak{g}^*  \to  \frak{g}_i^*  \to  \frak{t}_{i,+}^*  \to  \R
\end{equation}
becomes a periodic Hamiltonian, i.e., each function $\Phi^{(i)}_j$ generates a Hamiltonian $S^1$-action, on the open dense subset $\mcal{U}_j^{(i)}$ on which $\Phi^{(i)}_j$ is smooth, see \cite[Theorem 3.4]{GS_GC}.

	\begin{definition}\label{definition_GC_Multiplicity_free}
		Let $(M,\omega, \mu \colon M \rightarrow \frak{g}^*)$ be a Hamiltonian $G$-manifold. 
		Let $\Psi$ be the set of non-constant collective Hamiltonian functions of the form $\Phi_j^{(i)} \circ \mu$ where $\Phi_j^{(i)}$ is in~\eqref{equ_compoGCsys}. 
		If $\Psi$ happens to be a completely integrable system, then we call $\Psi$ a \emph{Gelfand--Cetlin system on} $M$. 
	\end{definition}
			
	The following theorem was proved by Guillemin and Sternberg under the assumption that $\mu(M)$ is a submanifold of $\frak{g}^*$. Recently Lane proved this in a general setting.
	
	\begin{theorem}[p. 224 in \cite{GS_Th}, Proposition 8 in \cite{Lane1}]\label{theorem_multiplicityfree}
		If a Hamiltonian $G$-manifold $(M,\omega,\mu)$ admits a Gelfand--Cetlin system, then the $G$-action is multiplicity free\footnote{Recall that a Hamiltonian $G$-action
		is {\em multiplicity free} if the symplectic reduction at every value in the image of the moment map is a point. Equivalently, the $G$-action is multiplicity free 
		if the induced $G$-action on $\mu^{-1}(\mcal{O}_\xi)$ is transitive for every $\xi \in \frak{g}^*$ where $\mcal{O}_\xi$ denotes the co-adjoint $G$-orbit 
		of $\xi$. See \cite{GS_Th} for more details.}.
	\end{theorem}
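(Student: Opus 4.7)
The plan is to derive the assertion from the general principle that a Hamiltonian $G$-action on $(M,\omega,\mu)$ is multiplicity-free if and only if its principal $G$-orbits are coisotropic submanifolds of $M$. Two structural facts drive the argument. First, for any $h \in C^\infty(\frak{g}^*)$, the Hamiltonian vector field of the collective Hamiltonian $h \circ \mu$ at $x \in M$ equals the fundamental vector field $\underline{(dh)_{\mu(x)}}_x$, so it lies in $T_x(G \cdot x)$. Second, the moment-map identity
\[
T_x(G \cdot x)^{\omega} \;=\; \ker (d\mu_x)
\]
follows at once from the defining relation $\omega(\underline{X},\cdot) = d\mu_X$, via the same short computation used in the proof of Lemma~\ref{lemma_preserve}.

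I would then observe that each $\Phi^{(i)}_j \in \Psi$ has the collective form $h^{(i)}_j \circ \mu$, where $h^{(i)}_j := l_{\xi_{ij}} \circ \Pi_i \circ \kappa_i$ is smooth on an open dense subset of $\frak{g}^*$. Complete integrability of $\Psi$ lets me extract $n := \tfrac{1}{2}\dim M$ functions $f_1,\ldots,f_n \in \Psi$ whose differentials are pointwise linearly independent on a common open dense subset $\mcal{U} \subseteq M$ and which pairwise Poisson-commute. At each $x \in \mcal{U}$, the vectors $X_{f_1}(x),\ldots,X_{f_n}(x)$ are therefore linearly independent, span an isotropic (hence Lagrangian) subspace $L_x \subseteq T_x M$, and, by the first structural fact, all lie in $T_x(G \cdot x)$. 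The inclusion $L_x \subseteq T_x(G \cdot x)$ then gives
\[
T_x(G \cdot x)^{\omega} \;\subseteq\; L_x^{\omega} \;=\; L_x \;\subseteq\; T_x(G \cdot x),
\]
so $T_x(G \cdot x)$ is coisotropic and in particular $\dim(G \cdot x) \geq n$ on $\mcal{U}$.

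Combining this with $T_x(G \cdot x)^\omega = \ker(d\mu_x)$ yields $\ker(d\mu_x) \subseteq T_x(G \cdot x)$ on $\mcal{U}$; that is, the tangent space to each $\mu$-fiber at a principal point sits inside the tangent space of the $G$-orbit. Integrating this tangential inclusion along the principal stratum shows that every connected component of $\mu^{-1}(\mu(x))$ is contained in the single $G$-orbit $G \cdot x$, so the symplectic reduction at every generic value of $\mu$ is a point and the $G$-action is multiplicity-free. The main delicate step in this outline is the last integration: one has to control the connected components of the generic fibers of $\mu$, which is precisely the point where Guillemin--Sternberg originally worked under the additional assumption that $\mu(M)$ is a submanifold of $\frak{g}^*$ and where Lane's subsequent argument removes that restriction.
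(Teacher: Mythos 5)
Note first that the paper does not prove Theorem~\ref{theorem_multiplicityfree}: it is cited from \cite{GS_Th} (p.\ 224) and \cite{Lane1} (Proposition~8), and the surrounding text explicitly records that Guillemin--Sternberg proved it under the additional hypothesis that $\mu(M)$ is a submanifold of $\frak{g}^*$ while Lane removed that restriction. There is therefore no in-paper argument to compare your proposal against.

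That said, your outline correctly captures the infinitesimal half of the standard argument. Since each $\Phi^{(i)}_j$ has the collective form $h \circ \mu$, its Hamiltonian vector field at $x$ is the fundamental vector field of $dh_{\mu(x)} \in \frak{g}$ and hence tangent to $G \cdot x$; complete integrability produces, on the dense open set $\mcal{U}$ where the differentials are independent and the relevant $h$'s are smooth, a Lagrangian subspace $L_x \subseteq T_x(G \cdot x)$; and combining $L_x^\omega = L_x \subseteq T_x(G \cdot x)$ with the moment-map identity $T_x(G \cdot x)^\omega = \ker d\mu_x$ gives $\ker d\mu_x \subseteq T_x(G \cdot x)$, i.e., coisotropy of the orbits on $\mcal{U}$. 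All of that is sound.

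The genuine gap is exactly where you place it, and it is not cosmetic. Multiplicity-freeness as the footnote defines it (every reduced space is a point; equivalently, $G$ acts transitively on $\mu^{-1}(\mcal{O}_\xi)$ for \emph{every} $\xi$) requires more than coisotropy of principal orbits. One must (a) control the connected components of $\mu^{-1}(\mcal{O}_\xi)$ so that the pointwise tangential inclusion $\ker d\mu_x \subseteq T_x(G\cdot x)$ integrates to a single $G$-orbit, and (b) handle non-generic values of $\xi$, where the orbit dimension can drop and where the dense open set $\mcal{U}$ may fail to meet $\mu^{-1}(\mcal{O}_\xi)$ at all. These are precisely the points resolved differently by the two cited sources. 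Since your proposal acknowledges but does not close that gap, it is a faithful sketch of the known proof rather than a complete proof---which is appropriate, given that the paper itself treats this theorem as something to be cited rather than reproved.
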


\subsection{Co-adjoint $\rmso(n)$-orbits}
\label{ssec_soNCases}

Let $G$ be a real special orthogonal group $\rmso(n)$ where $n = 2\nu+1$. Take the following maximal torus $T$$\colon$ 
\[
	T := \left\{ \begin{pmatrix}
		R(\theta_1) & \cdots & 0 & 0\\
		\vdots & \ddots & \vdots & \vdots\\
		0  & \cdots & R(\theta_\nu) & 0 \\
		0         & \cdots & 0 & 1
	\end{pmatrix} \in \rmso(n) ~\colon~ (\theta_1, \cdots, \theta_{\nu}) \in \R^{\nu} \right\} 
	\,\, \mbox{where }
	R(\theta) = 
	\begin{pmatrix} \cos \theta & -\sin \theta \\ \sin \theta & \cos \theta \end{pmatrix}.
\]
The Lie algebra $\frak{g}$ is the set $\mcal{S}_{n}$ of real skew-symmetric $(n \times n)$ matrices. 
Equip $\frak{g}$ with the inner product given by
\begin{equation}\label{equation_metric}
	\langle X, Y \rangle := - \frac{1}{2} \mathrm{Tr}(XY), \quad \mbox{for $X,Y \in \mcal{S}_{n}$},
\end{equation}  
which defines an $\mathrm{Ad}(G)$-invariant metric. Take an orthonormal basis $\{ e_{ij} \}_{1 \leq i < j \leq 2\nu+1}$ for $\frak{g}$ with respect to the bilinear form~\eqref{equation_metric} where $e_{ij} \in \mcal{S}_n$ is defined by 
\begin{equation}\label{equ_choiceoforthonomal}
	(e_{ij})_{pq} = \begin{cases} 
		 {1}  \quad &\text{if}~ (i,j) = (p,q) \\
		 -{1} \quad &\text{if} ~(i,j) = (q,p) \\ 
		 0 \quad & \text{otherwise.}
		\end{cases}
\end{equation}

For $i=1,\cdots, \nu$, we particularly set $h_i := e_{2i-1,2i}$. Note that the form $\langle \cdot, \cdot \rangle$ induces a natural identification of $\frak{g}$ with $\frak{g}^*$. In terms of the orthonormal basis $\{e_{ij}\}$ in~\eqref{equ_choiceoforthonomal}, the identification can be described as its dual basis$\colon$ 
\begin{equation}\label{equ_dualformidentification}
	\begin{array}{ccl}
		\frak{g}^{\vphantom{*}} & \rightarrow & \frak{g}^* \\
			e^{\vphantom{*}}_{ij} & \mapsto & e_{ij}^*
	\end{array}
	\quad \quad e_{ij}^*(e^{\vphantom{*}}_{pq}) = \begin{cases} 1 \hspace{0.5cm} \text{if}~(i,j) = (p,q) \\ 0 \hspace{0.5cm} \text{otherwise.} \end{cases}
\end{equation}
Also, the Lie algebra $\frak{t} \subset \frak{g}$ of $T$ consists of elements of the form
\[
\frak{t} = \left\{ \, \begin{pmatrix}
		B({\lambda_1}) & \cdots & 0 & 0\\
		\vdots & \ddots & \vdots & \vdots  \\
		0  & \cdots & B({\lambda_\nu}) & 0 \\
		0         & \cdots & 0 & 0
	\end{pmatrix} \in \mcal{S}_{2\nu+1} ~:~ (\lambda_1, \cdots, \lambda_\nu) \in \R^\nu
	\right\}
\,\, \mbox{where }
B(\lambda) := \begin{pmatrix} 0 & \lambda \\ - \lambda & 0 \end{pmatrix}.
\]
Moreover, the Lie algebra $\frak{t}$ can be identified with $\R^\nu$ by the map
\begin{equation}\label{equ_dualformidentification2}
\sum_{i=1}^\nu  \lambda_i \, h_i \in \frak{t} \mapsto (\lambda_1,\cdots,\lambda_\nu) \in \R^\nu.
\end{equation}

Under the identifications~\eqref{equ_dualformidentification} and~\eqref{equ_dualformidentification2}, the linear functional $h_i^*$ on $\frak{t}$ becomes the linear functional on $\R^\nu$ defined by $h_i^*(\lambda_1,\cdots, \lambda_\nu) = \lambda_i$.
The root of $\frak{g}$ consists of $\{\pm h_i^*\}_{i=1,\cdots,\nu}  \cup \{ \pm(h_i^* \pm h_j^*) \}_{i \neq j}$. A base can be chosen as
\[
	\Delta = \left\{h_1^* - h_2^*, \cdots, h_{\nu-1}^* - h_\nu^*, h_\nu^* \right\} \subset \frak{t}^*.
\]
Then the corresponding positive Weyl chamber is given by
\begin{equation}\label{equ_positiveweylodd}
	\frak{t}_+^* = \{(\lambda_1, \cdots, \lambda_\nu) \in \frak{t}^* ~|~ \lambda_1 \geq \cdots \geq \lambda_\nu \geq 0\},\,\, \mbox{where $(\lambda_1, \cdots, \lambda_\nu) := \lambda^{\vphantom{*}}_1h_1^*+ \cdots +  \lambda^{\vphantom{*}}_\nu h_\nu^*.$}
\end{equation}
Let $I^{(n)}_\lambda = I^{(2\nu + 1)}_\lambda$ be the skew-symmetric matrix in $\mcal{S}_n$ corresponding to $(\lambda_1, \cdots, \lambda_\nu)$, that is,
\begin{equation}\label{equ_Ilambdaodd}
I^{(n)}_\lambda :=
\begin{pmatrix}
		B({\lambda_1}) & \cdots & 0 & 0\\
		\vdots & \ddots & \vdots & \vdots  \\
		0  & \cdots & B({\lambda_\nu}) & 0 \\
		0         & \cdots & 0 & 0
	\end{pmatrix} \in \mcal{S}_{2\nu+1}
\end{equation}
where $\lambda = (\lambda_1, \cdots, \lambda_\nu) \in \frak{t}^*_+$. The following lemma is well-known.

\begin{lemma}\label{lemma_spectralodd}
For a real skew-symmetric matrix $A \in \mcal{S}_{n}$ where $n = 2\nu+1$,  there exists $Q \in \rmso(n)$ such that
\[
Q^T A Q = I^{(n)}_\lambda  
\]
for some $\lambda_1 \geq \lambda_2 \geq \cdots \geq \lambda_\nu \geq 0$ where $I^{(n)}_\lambda$ is in~\eqref{equ_Ilambdaodd}.
\end{lemma}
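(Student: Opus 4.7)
The plan is to reduce this to the classical spectral theorem for Hermitian operators after complexification, and then descend back to a real orthonormal frame compatible with $\mathrm{SO}(n)$.

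First, I would observe that $iA$ is Hermitian, since $(iA)^* = -iA^{T} = iA$. By the complex spectral theorem, $iA$ is diagonalizable with real eigenvalues, so the eigenvalues of $A$ itself lie on the imaginary axis. Since $A$ is real, its characteristic polynomial has real coefficients, hence the non-real eigenvalues of $A$ come in complex conjugate pairs $\pm i\mu$ with $\mu \in \R_{>0}$. Because $n = 2\nu+1$ is odd, the multiset of eigenvalues of $A$ must contain zero with odd multiplicity; in particular the kernel $\ker A$ is nontrivial.

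Next, for each conjugate pair of eigenvalues $\pm i\mu_k$ with $\mu_k > 0$, pick a unit complex eigenvector $v_k \in \C^n$ with $Av_k = i\mu_k v_k$, and write $v_k = \tfrac{1}{\sqrt{2}}(u_k - i w_k)$ with $u_k, w_k \in \R^n$. A direct computation using $A v_k = i\mu_k v_k$ and $A \bar v_k = -i\mu_k \bar v_k$ shows $A u_k = -\mu_k w_k$ and $A w_k = \mu_k u_k$, and the orthonormality of the complex eigenvectors of the Hermitian operator $iA$ translates into the statement that $\{u_k, w_k\}$ forms a real orthonormal pair and different pairs (and the kernel) are mutually orthogonal in $\R^n$. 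Thus with respect to the real orthonormal basis $(u_1, w_1, u_2, w_2, \ldots, u_\nu, w_\nu, e)$, where $e$ is a unit vector spanning (a one-dimensional subspace of) $\ker A$, the matrix $A$ takes the block diagonal form with diagonal blocks $B(\mu_1), \ldots, B(\mu_\nu), (0)$.

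Now I would arrange the signs and the ordering. Permuting the blocks by a suitable permutation of the pairs $(u_k, w_k)$ we may assume $\mu_1 \geq \mu_2 \geq \cdots \geq \mu_\nu \geq 0$. If some $\mu_k$ were negative (if one chose a different sign convention for $v_k$) one could swap $u_k \leftrightarrow w_k$, which flips $B(\mu_k)$ to $B(-\mu_k)$ via conjugation by the orthogonal $2\times 2$ swap, so we may take all $\mu_k \geq 0$. This produces an orthogonal matrix $Q_0 \in \mathrm{O}(n)$ with $Q_0^T A Q_0 = I^{(n)}_\lambda$ where $\lambda = (\mu_1, \ldots, \mu_\nu) \in \frak{t}^*_+$ as in~\eqref{equ_positiveweylodd}.

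Finally, to upgrade $Q_0 \in \mathrm{O}(n)$ to $Q \in \mathrm{SO}(n)$, I would note that if $\det Q_0 = -1$ one can simply replace the last basis vector $e$ by $-e$; this flips the sign of $\det Q_0$ without affecting the conjugated matrix, since the trailing block $(0)$ is unchanged under sign reversal. The resulting $Q$ lies in $\mathrm{SO}(n)$ and satisfies $Q^T A Q = I^{(n)}_\lambda$. No step here should present serious obstacles; the only care needed is in the bookkeeping that turns the complex diagonalization of $iA$ into a genuinely orthonormal real frame, which is standard linear algebra.
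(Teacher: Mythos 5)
The paper does not actually prove Lemma~\ref{lemma_spectralodd}; it states it as well-known and gives no argument, so there is nothing in the text to compare against. Your proof is the standard complexification argument, and the strategy is sound: pass to $iA$ Hermitian, diagonalize over $\C$, split conjugate pairs of eigenvectors into real and imaginary parts, and restore the determinant sign at the end.

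Two small issues are worth noting. First, with the convention $v_k = \tfrac{1}{\sqrt{2}}(u_k - i w_k)$ and $A v_k = i\mu_k v_k$, separating real and imaginary parts actually gives $A u_k = \mu_k w_k$ and $A w_k = -\mu_k u_k$, which is the opposite of what you wrote; this is harmless because, as you observe, swapping $u_k$ and $w_k$ flips the sign of the block, so either sign convention leads to $B(\mu_k)$. Second, and more substantively, the construction as written produces pairs $(u_k, w_k)$ only for eigenvalues $\pm i\mu_k$ with $\mu_k > 0$. If $\ker A$ has real dimension $2m+1 > 1$, this yields only $\nu - m$ such pairs, and the claimed basis $(u_1, w_1, \ldots, u_\nu, w_\nu, e)$ is short. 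One must additionally choose a real orthonormal basis $f_1, \ldots, f_{2m+1}$ of $\ker A$ and set $(u_{\nu - m + j}, w_{\nu - m + j}) := (f_{2j-1}, f_{2j})$ with $\mu_{\nu - m + j} := 0$ for $j = 1, \ldots, m$, and $e := f_{2m+1}$; these give the trailing $B(0)$ blocks and the final zero entry. With that addendum, the argument is complete, and the determinant fix by flipping $e \mapsto -e$ is correct.
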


Lemma~\ref{lemma_spectralodd} tells us that the orbit of any skew-symmetric matrix under the conjugate $\mathrm{SO}(n)$-action intersects the positive Weyl chamber at 
exactly one point $(\lambda_1, \cdots, \lambda_\nu) \in \frak{t}^*_+$ in~\eqref{equ_positiveweylodd}. 
For any $\lambda = (\lambda_1, \cdots, \lambda_\nu) \in \frak{t}_+^*$, the \emph{co-adjoint orbit of}  $\mathrm{SO(n)}$ \emph{associated to} $\lambda$ is expressed as 
\begin{equation}\label{equ_coadjointorbit2nu+1}
\mcal{O}_\lambda^{(n)} = \left\{ Q^T \cdot I^{(n)}_\lambda \cdot Q \in \mcal{S}_{n} \mid Q \in \mathrm{SO}(n) \right\} = \left\{ A \in \mcal{S}_{n} \mid \textup{spec}(A) \equiv \langle \pm \lambda_1 i, \cdots, \pm \lambda_\nu i, 0 \rangle \right\}
\end{equation}
where $\textup{spec}(A) \equiv \langle \pm \lambda_1 i, \cdots, \pm \lambda_\nu i, 0 \rangle$ is the complete list of eigenvalues for $A$ including all repeated eigenvalues.  
Let us equip the co-adjoint orbit $\mcal{O}_\lambda^{(2\nu+1)}$ with the Kirillov--Kostant--Souriau form $\omega_\lambda^{(2\nu+1)}$. 

On the other hand, for the case where $G = \rmso(n)$ with $n := 2 \nu$, we consider the maximal torus 
\[
	T := \left\{ \begin{pmatrix}
		R(\theta_1) & \cdots & 0 \\
		\vdots & \ddots & \vdots \\
		0  & \cdots & R(\theta_\nu)
	\end{pmatrix} \in \rmso(2 \nu) ~:~  (\theta_1, \cdots, \theta_\nu) \in \R^\nu \right\}.
\]
The set of roots of $\frak{g}$ is given by $\{ \pm(h_i^* \pm h_j^*) \}_{i \neq j}$ and a base is taken by $\Delta = \{h_1^* - h_2^*, \cdots, h_{\nu-1}^* \pm h_\nu^* \}.$
The positive Weyl chamber with respect to $\Delta$ is then 
\begin{equation}\label{equ_coadjointorbit2nu}
	\frak{t}_+^* = \left\{(\lambda_1, \cdots, \lambda_\nu) \in \frak{t}^* ~|~ \lambda_1 \geq \cdots \geq \lambda_{\nu-1} \geq |\lambda_\nu| \right\}, \mbox{where $(\lambda_1, \cdots, \lambda_\nu) = \lambda_1h_1^* + \cdots + \lambda_\nu h_\nu^*$.} 
\end{equation}
Let $I^{(n)}_\lambda = I^{(2\nu)}_\lambda$ be the skew-symmetric matrix in $\mcal{S}_n$ corresponding to $(\lambda_1, \cdots, \lambda_\nu)$.
\begin{lemma}\label{lemma_spectraleven}
For a real skew-symmetric matrix $A \in \mcal{S}_{n}$ where $n = 2\nu$, there exists $Q \in \mathrm{SO}(n)$ such that
\[
Q^T A Q = I^{(n)}_\lambda
\]
for some $\lambda_1 \geq \cdots \geq \lambda_{\nu-1} \geq |\lambda_\nu|$.  
\end{lemma}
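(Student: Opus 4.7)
The plan is to imitate the proof outline for the odd case (Lemma~\ref{lemma_spectralodd}) but keep careful track of the determinant, since for even $n$ the canonical form must land in $\rmso(n)$ rather than just $O(n)$, and this is precisely what forces the last entry $\lambda_\nu$ to be only constrained in absolute value.

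First, I would apply the real spectral theorem to the skew-symmetric matrix $A \in \mcal{S}_{2\nu}$. Because $A$ is normal with purely imaginary spectrum, its eigenvalues pair up as $\pm i\mu_k$, $k=1,\dots,\nu$, with $\mu_k \in \R_{\ge 0}$ (zero eigenvalues for even $n$ contribute an even-dimensional kernel, so no orphan real eigenvector arises). For each $k$ with $\mu_k > 0$, pick a Hermitian unit eigenvector $v_k = a_k + i b_k$ for the eigenvalue $i\mu_k$; Hermitian orthogonality of $v_k$ and $\bar v_k$ forces $\|a_k\|=\|b_k\|=1/\sqrt{2}$ and $a_k \perp b_k$, so the rescaled pair $(\sqrt{2}\,a_k,\sqrt{2}\,b_k)$ is orthonormal and $A$ restricted to its span is exactly $B(\mu_k)$. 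For each $k$ with $\mu_k=0$ choose an orthonormal pair inside $\ker A$. Assembling all these pairs as the columns of a matrix $O$ yields $O \in O(2\nu)$ with $O^T A\, O = \mathrm{diag}\bigl(B(\mu_1),\dots,B(\mu_\nu)\bigr)$.

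Second, I would reorder the blocks so that $|\mu_1|\ge |\mu_2|\ge\cdots\ge|\mu_\nu|$. Each swap of two adjacent $2\times 2$ blocks is realized by the permutation matrix interchanging the coordinate pairs $(2i-1,2i)$ and $(2i+1,2i+2)$; being a product of two transpositions, it has determinant $+1$, so the full sorting is carried out by an $\rmso(2\nu)$-conjugation. Third, I would correct the signs. The identity $D\, B(\mu)\, D = B(-\mu)$ with $D = \mathrm{diag}(1,-1)$ shows that flipping the sign of any $\mu_k$ is implemented by conjugation by an orthogonal matrix of determinant $-1$. I would flip the sign of each negative $\mu_k$ among $k=1,\dots,\nu-1$, after which the accumulated conjugation lies in $O(2\nu)$ with some determinant $\varepsilon \in \{\pm 1\}$; if $\varepsilon = -1$, I would finally flip the sign of $\mu_\nu$, which still respects $|\mu_\nu|\le |\mu_{\nu-1}|$ and restores the determinant to $+1$. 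Setting $\lambda_k$ equal to the resulting signed value of $\mu_k$ produces the desired $Q\in \rmso(2\nu)$ with $Q^T A Q = I^{(n)}_\lambda$ and $\lambda_1\ge\cdots\ge\lambda_{\nu-1}\ge|\lambda_\nu|$.

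The only real obstacle is bookkeeping of the determinant, and this is also where the even case genuinely differs from the odd case. In Lemma~\ref{lemma_spectralodd} the trailing zero row and column give a cost-free sign-flip: conjugation by $\mathrm{diag}(I_{n-1},-1)$ has determinant $-1$ but leaves every $B(\mu_k)$ block unchanged, so any sign discrepancy can be absorbed without touching the $\mu_k$'s, and one obtains $\lambda_\nu \ge 0$. In the present even case no such free sign-flip exists, so one sign flip must be spent on an actual block, and the natural choice is the block of smallest absolute value; this is exactly the content of the relaxed condition $\lambda_{\nu-1}\ge|\lambda_\nu|$ in the statement.
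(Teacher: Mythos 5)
Your proof is correct. The paper does not supply a proof of this lemma (nor of Lemma~\ref{lemma_spectralodd}); both are stated as standard facts. Your argument---block-diagonalize $A$ over $\R$ using the spectral theorem for skew-symmetric matrices, sort the $2\times 2$ blocks by permutations that cost no determinant, then absorb a possible $\det = -1$ by conjugating the last block by $\mathrm{diag}(1,-1)$---is the standard route, and your observation about why the odd case allows a cost-free sign flip (the trailing zero coordinate) while the even case must spend it on $\lambda_\nu$ is exactly the right explanation for the asymmetry between the two lemmas. One minor wrinkle: in your first step you fix $\mu_k \ge 0$, so the phrase in Step~3 about ``flipping the sign of each negative $\mu_k$'' is vacuous; the only sign flip ever actually performed is the one that compensates a possible $\det = -1$. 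This does not affect the validity of the argument.
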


By Lemma~\ref{lemma_spectraleven}, the orbit of any skew-symmetric matrix under the conjugate $\mathrm{SO}(n)$-action intersects the positive Weyl chamber at 
exactly one point $(\lambda_1, \dots, \lambda_\nu) \in \frak{t}^*_+$ in~\eqref{equ_coadjointorbit2nu}. 
For any $\lambda = (\lambda_1, \dots, \lambda_\nu) \in \frak{t}_+^*$, the \emph{co-adjoint orbit of $\mathrm{SO}(n)$ associated to} $\lambda$ can be expressed as 
\begin{equation}\label{equ_coadjointorbit2nu}
\begin{split}
\mcal{O}_\lambda^{(n)} &= \left\{ Q^T \cdot I^{(n)}_\lambda \cdot Q \in \mcal{S}_{n} \mid Q \in \mathrm{SO}(n) \right\} \\
&= \left\{ A \in \mcal{S}_{n} \mid  \mathrm{pf}(A) =  \mathrm{pf}(I^{(n)}_\lambda) = \lambda_1 \dots \lambda_\nu, \, \textup{spec}(A) \equiv \langle \pm \lambda_1 \sqrt{-1}, \dots, \pm \lambda_\nu \sqrt{-1} \rangle \right\}
\end{split}
\end{equation}
where $\mathrm{pf}(A)$ is the Pfaffian of $A$ and $\textup{spec}(A) \equiv \langle \pm \lambda_1 i, \cdots, \pm \lambda_\nu i \rangle$ is the complete list of eigenvalues for $A$ including all repeated eigenvalues.
Let us equip $\mcal{O}_\lambda^{(2\nu)}$ with the Kirillov--Kostant--Souriau form $\omega^{(2\nu)}_\lambda$.

\subsection{Classical Gelfand--Cetlin systems on co-adjoint $\rmso(n)$-orbits}

Let $G = \rmso(n)$. For any $\lambda \in \frak{t}^*_+$, consider the co-adjoint orbit $(\mcal{O}^{(n)}_\lambda, \omega^{(n)}_\lambda )$ defined in~\eqref{equ_coadjointorbit2nu+1} and~\eqref{equ_coadjointorbit2nu}.
The inclusion $\mu \colon \mcal{O}^{(n)}_\lambda \to \frak{g}^*$ can be taken as a moment map of the co-adjoint $G$-action.
Thimm's trick from the sequence $G_\bullet$ of subgroups of $G$ in~\eqref{equation_nested_sequence} consisting of 
\[
G_{i-1} = \left\{
	\begin{pmatrix}
		B & 0 \\ 0 & I_{n-i} 
	\end{pmatrix} \mid B \in \rmso(i)
\right \}
\quad
\mbox{for $i=1,\cdots,n$}
\]
yields the set of collective Hamiltonian functions on (an open dense subset of) the Hamiltonian $G$-manifold $(\mcal{O}^{(n)}_\lambda, \omega_\lambda, \mu)$. 
We denote by $\Phi_{\lambda}^{(n)}$ the set of all non-constant such collective Hamiltonian functions of the form~\eqref{equ_compoGCsys} on $(\mcal{O}^{(n)}_\lambda, \omega^{(n)})$.

\begin{proposition}[cf. Section 4 in \cite{GS_Th}]\label{proposition_cisgc}
The system $\Phi_{\lambda}^{(n)}$ forms a completely integrable system (Definition~\ref{definition_cis}).
\end{proposition}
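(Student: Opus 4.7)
The plan is to verify the three conditions of Definition~\ref{definition_cis} for $\Phi_\lambda^{(n)}$: smoothness on a common open dense subset $\mcal{U} \subset \mcal{O}_\lambda^{(n)}$, pairwise Poisson commutativity on $\mcal{U}$, and linear independence of the differentials on $\mcal{U}$, together with a count matching $|\Phi_\lambda^{(n)}|$ to $\tfrac{1}{2} \dim \mcal{O}_\lambda^{(n)}$. Smoothness is immediate from the construction of Section~\ref{ssecCollectivePeriodicHamiltonians}: each $\Phi_j^{(i)} = l_{\xi_{ij}} \circ \Pi_i \circ \kappa_i \circ \mu$ is smooth on $\Phi_i^{-1}(\textup{int}(\frak{t}_{i,+}^*))$, and the intersection over the finitely many $(i,j)$ remains open and dense.

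For pairwise Poisson commutativity, I would separate the same-level and cross-level cases. At a fixed level $i$, the generators $l_{\xi_{ij}} \circ \Pi_i$ on $\frak{g}_i^*$ are $\mathrm{Ad}^*(G_i)$-invariant, hence Casimirs for the Lie--Poisson bracket; applying \eqref{equation_Poisson} to the $G_i$-moment map $\Phi_i$ yields $\{\Phi_j^{(i)}, \Phi_{j'}^{(i)}\}_M = 0$. For $i<j$, I would use the factorization $\kappa_i = \kappa_{ij} \circ \kappa_j$, where $\kappa_{ij} \colon \frak{g}_j^* \to \frak{g}_i^*$ is the transpose of $\frak{g}_i \hookrightarrow \frak{g}_j$. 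A level-$i$ function then rewrites as $(l_{\xi_{ij'}} \circ \Pi_i \circ \kappa_{ij}) \circ \Phi_j$, and the level-$j$ function, being a $G_j$-Casimir on $\frak{g}_j^*$, Poisson-commutes with it there; a second use of \eqref{equation_Poisson} for $\Phi_j$ transfers the vanishing to $M$.

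For the count I would use $\mathrm{rank}(G_j) = \lfloor(j+1)/2\rfloor$ since $G_j \cong \mathrm{SO}(j+1)$, contributing that many candidate functions per level. Summing over $j = 0, \ldots, n-1$ yields $\nu(\nu+1)$ when $n = 2\nu+1$ and $\nu^2$ when $n = 2\nu$. At the top level $j = n-1$ one has $G_{n-1} = G$, so those functions are $\mathrm{Ad}^*(G)$-invariant and constant on $\mcal{O}_\lambda^{(n)}$; they are excluded from $\Phi_\lambda^{(n)}$ by definition. Subtracting $\mathrm{rank}(G) = \nu$ leaves $\nu^2$ and $\nu(\nu-1)$ respectively, which matches $\tfrac{1}{2}\dim \mcal{O}_\lambda^{(n)}$ for generic $\lambda$ read off from \eqref{equ_coadjointorbit2nu+1} and \eqref{equ_coadjointorbit2nu}.

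The main obstacle will be linear independence of the differentials on a common open dense subset. I would proceed by induction on $n$ via the classical min-max interlacing principle: on an open dense subset $\mcal{U} \subset \mcal{O}_\lambda^{(n)}$, the spectra of the successive upper-left principal submatrices of $A \in \mcal{O}_\lambda^{(n)}$ strictly interlace in the pattern dictated by the branching $\mathrm{SO}(i+1) \supset \mathrm{SO}(i)$. Restricted to $\mcal{U}$, the $\Phi_j^{(i)}$ generate an effective Hamiltonian torus action of rank $|\Phi_\lambda^{(n)}|$, which forces pointwise linear independence of the $d\Phi_j^{(i)}$. Equivalently, Theorem~\ref{theorem_multiplicityfree} applied stagewise along the chain $G_\bullet$ supplies a multiplicity-free structure saturating the integrability bound, completing the verification of Definition~\ref{definition_cis}.
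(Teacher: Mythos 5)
Your treatment of Poisson commutativity is correct and amounts to a more explicit spelling-out of what the paper delegates to Section~\ref{ssecCollectivePeriodicHamiltonians} and \cite[Proposition 3.1]{GS_GC}: the same-level case is the Casimir observation, and the cross-level case via $\kappa_i = \kappa_{ij}\circ\kappa_j$ together with \eqref{equation_Poisson} is the standard Thimm argument. The smoothness/domain discussion is also fine. Where the paper and your proposal really diverge is on functional independence, and here you have two genuine gaps.

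First, the route through Theorem~\ref{theorem_multiplicityfree} is circular. That theorem takes as hypothesis that the collective system \emph{already is} a Gelfand--Cetlin system in the sense of Definition~\ref{definition_GC_Multiplicity_free}, i.e.\ that it is a completely integrable system; its conclusion (multiplicity freeness) is a consequence of what you are trying to prove, not a tool for proving it. Indeed, in the paper the multiplicity-free statement (Corollary~\ref{proposition_multiplicity_free}) is itself derived \emph{from} Proposition~\ref{proposition_cisgc} plus Theorem~\ref{theorem_multiplicityfree}, not the other way around, and Lemma~\ref{lemma_transitive} carries this out stagewise only after integrability is in hand. So the ``equivalently, Theorem~\ref{theorem_multiplicityfree} applied stagewise'' clause cannot stand. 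Your other route --- strict min-max interlacing on an open dense set forcing an effective torus action of full rank --- is the right idea and could be made to work, but as written it asserts effectiveness rather than proving it, and for non-generic $\lambda$ the interlacing cannot be strict in the repeated/zero directions, so one must first restrict to the non-constant coordinates and argue independence among those only. Second, your count of $\lvert\Phi_\lambda^{(n)}\rvert$ via $\sum_j \mathrm{rank}(G_j) - \mathrm{rank}(G)$ is only carried out for generic $\lambda$; for $\lambda$ with repeated entries or with $\lambda_\nu = 0$ there are additional constant functions excluded from $\Phi_\lambda^{(n)}$, and you would need to show that both $\lvert\Phi_\lambda^{(n)}\rvert$ and $\tfrac{1}{2}\dim_\R\mcal{O}_\lambda^{(n)}$ drop by the same amount.

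The paper sidesteps both issues at once by invoking $\dim\Delta_\lambda^{(n)} = \tfrac{1}{2}\dim_\R\mcal{O}_\lambda^{(n)}$, which it justifies through the explicit description of the GC polytope (Theorem~\ref{theorem_GCpolytopeineq} and the surjectivity cited from \cite{Pab2}): once you know the image of $\Phi_\lambda^{(n)}$ is a polytope of dimension equal to the number of component functions, the differential must attain full rank on an open (hence dense) set, and the count is automatic. If you want to avoid citing the image description, you should replace the circular step by an actual verification that the strict-interlacing locus is non-empty and that on it the map is a submersion onto the interior of $\Delta_\lambda^{(n)}$, taking care to do this uniformly in $\lambda$.
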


\begin{definition}
The system $\Phi_{\lambda}^{(n)}$ is called the \emph{(classical) Gelfand-Cetlin system} on $\mcal{O}^{(n)}_\lambda$. 
\end{definition}

The following proposition follows from the complete integrability of the (classical) Gelfand-Cetlin system. 

\begin{corollary}[cf. Section 4 in \cite{GS_Th}, Lemma 24 in \cite{Lane2}]\label{proposition_multiplicity_free}
	Let $G = \mathrm{SO}(n)$. Any co-adjoint $G$-orbit is a multiplicity free $\mathrm{SO}(n-1)$ space.
\end{corollary}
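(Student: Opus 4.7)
The plan is to apply Theorem~\ref{theorem_multiplicityfree} to the Hamiltonian $\mathrm{SO}(n-1)$-manifold $(\mcal{O}^{(n)}_\lambda, \omega^{(n)}_\lambda, \Phi_{n-1})$, where $\Phi_{n-1} = \kappa_{n-1} \circ \mu$ is the restriction of the inclusion $\mu \colon \mcal{O}^{(n)}_\lambda \hookrightarrow \frak{so}(n)^*$ along $\kappa_{n-1} \colon \frak{so}(n)^* \to \frak{so}(n-1)^*$. It therefore suffices to exhibit a Gelfand--Cetlin system in the sense of Definition~\ref{definition_GC_Multiplicity_free} for this restricted action; the theorem then forces multiplicity freeness.

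To produce such a system, I would apply Thimm's trick to the shorter chain $\langle e\rangle \subset \mathrm{SO}(1) \subset \cdots \subset \mathrm{SO}(n-1)$ using $\Phi_{n-1}$. This manufactures the collective Hamiltonians $l_{\xi_{ij}} \circ \Pi_i \circ \kappa_i^{n-1} \circ \Phi_{n-1}$ for $1 \le i \le n-1$, where $\kappa_i^{n-1} \colon \frak{so}(n-1)^* \to \frak{so}(i)^*$ is the dual of the block-diagonal embedding inside $\mathrm{SO}(n-1)$. Because the nested inclusions of Section~\ref{ssec_soNCases} yield $\kappa_i = \kappa_i^{n-1} \circ \kappa_{n-1}$, these functions are literally the level-$i$ components of $\Phi^{(n)}_\lambda$ for $i \le n-1$. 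Moreover, the level-$n$ components $l_{\xi_{nj}} \circ \Pi_n \circ \mu$ are the $\mathrm{SO}(n)$-Casimirs, which by Lemmas~\ref{lemma_spectralodd} and~\ref{lemma_spectraleven} are constant on $\mcal{O}^{(n)}_\lambda$ and hence already excluded from $\Phi^{(n)}_\lambda$ by the convention that only non-constant functions are retained. Consequently, the proposed $\mathrm{SO}(n-1)$-Gelfand--Cetlin subsystem coincides with all of $\Phi^{(n)}_\lambda$, which is completely integrable on $\mcal{O}^{(n)}_\lambda$ by Proposition~\ref{proposition_cisgc}.

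The step I expect to require the most care is confirming that this same collection, when reinterpreted as $\mathrm{SO}(n-1)$-collective data, still satisfies every clause of Definition~\ref{definition_cis}: smoothness and Poisson commutativity come along for free from the collective construction, but functional independence on an open dense subset of $\mcal{O}^{(n)}_\lambda$ must be inherited from the $\mathrm{SO}(n)$-integrability of $\Phi^{(n)}_\lambda$, which hinges on the observation that the discarded level-$n$ Casimirs contribute no independent differentials on the orbit, so that the $\tfrac{1}{2}\dim \mcal{O}^{(n)}_\lambda$ independent directions of $\Phi^{(n)}_\lambda$ are untouched by the passage to the shorter chain. Once this bookkeeping is in place, Theorem~\ref{theorem_multiplicityfree} applies verbatim and delivers the corollary.
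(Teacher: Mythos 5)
Your proof is correct and follows essentially the same route as the paper's one-line argument, which simply cites Theorem~\ref{theorem_multiplicityfree} and Proposition~\ref{proposition_cisgc}. You have just filled in the bookkeeping that the paper leaves implicit (and spells out later in the proof of Lemma~\ref{lemma_transitive}): namely, that $\Phi_\lambda^{(n)}$ is already the set of $\mathrm{SO}(n-1)$-collective Hamiltonians arising from the chain terminating at $\mathrm{SO}(n-1)$, since the level-$n$ Casimirs are constant on the orbit and hence discarded, so Theorem~\ref{theorem_multiplicityfree} applies with $G=\mathrm{SO}(n-1)$.
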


\begin{proof}
It follows from Theorem~\ref{theorem_multiplicityfree} and Proposition~\ref{proposition_cisgc}. 
\end{proof}

The remaining section is reserved for proving Proposition~\ref{proposition_cisgc}.
We shall describe the Gelfand--Cetlin system on $\mcal{O}^{(n)}_\lambda$ in terms of \emph{eigenvalues}. 
For any skew-symmetric matrix $A \in \mcal{S}_n$ in~\eqref{equ_coadjointorbit2nu+1} or~\eqref{equ_coadjointorbit2nu}, consider the $m$-th order leading principal submatrix $A^{(m)}$ of $A$ where $2 \leq m \leq n$.  
We then have the eigenvalues of $A^{(m)}$, which are all pure imaginary. Setting $\nu(m) :=  \lfloor m / 2 \rfloor$, we may arrange them in a \emph{descending} order as follow
\[
\begin{cases}
\left\langle \pm \lambda^{(m)}_1 \sqrt{-1},\,\, \pm \lambda^{(m)}_2 \sqrt{-1},\,\, \cdots,\,\, \pm \lambda^{(m)}_{\nu(m)} \sqrt{-1} \right\rangle, \quad &\mbox{if $m$ is even } \\
\left\langle \pm \lambda^{(m)}_1 \sqrt{-1},\,\, \pm \lambda^{(m)}_2 \sqrt{-1},\,\, \cdots,\,\, \pm \lambda^{(m)}_{\nu(m)} \sqrt{-1}, 0 \right\rangle,  \quad &\mbox{if $m$ is odd} 
\end{cases}
\]
where $\lambda^{(m)}_1 \geq \cdots \geq \lambda^{(m)}_{\nu(m) - 1} \geq \lambda^{(m)}_{\nu(m)} \geq 0$. 
For $1 \leq i \leq \nu(m)$, define 
\begin{equation}\label{equ_gccomponentmoded}
\Phi_\lambda^{i,m-i}\colon  \mcal{O}_\lambda^{(n)} \to \R 
\, \mbox{   by   } \,
A \mapsto \Phi_\lambda^{i,m-i} (A) := 
\begin{cases}
- \lambda_{\nu(m)}^{(m)} \quad &\mbox{if $i = \nu(m)$ and $\mathrm{pf}(A^{(m)}) < 0$} \\
+ \lambda_i^{(m)} \quad &\mbox{otherwise.}
\end{cases}
\end{equation}
By Lemma~\ref{lemma_spectralodd} and~\ref{lemma_spectraleven},
the collection of non-constant functions in $\left\{ \Phi_\lambda^{i, m-i} \mid 2 \leq m < n, 1 \leq i \leq \nu(m) \right\}$ is exactly the GC system $\Phi_{\lambda}^{(n)}$ on $(\mcal{O}^{(n)}_\lambda, \omega_\lambda, \mu)$  under the identification~\eqref{equ_dualformidentification}. 
This collection will be denoted by
\begin{equation}\label{equ_GCsystemsolambdabd}
\Phi_\lambda^{(n)} \colon \mcal{O}_\lambda^{(n)} \to \R^{\dim_\R \mcal{O}_\lambda^{(n)}/2}.
\end{equation}

In the case where $G =\rmso(n)$, the image of a Gelfand--Cetlin system $\Phi_\lambda^{(n)}$ in~\eqref{equ_GCsystemsolambdabd} is a convex polytope, called a {\em Gelfand--Cetlin polytope} or simply a {\em GC polytope} which we denote by $\Delta_\lambda^{(n)}$. 
The polytope is indeed defined by intersecting the inequalities from the min-max principle of skew-symmetric matrices. Let $(u_{i,j})$ be the coordinates of $\R^{\dim_\R \mcal{O}_\lambda^{(n)}}$ recording the output of $\Phi_\lambda^{i,j}$ in~\eqref{equ_gccomponentmoded}. In terms of the coordinates $(u_{i,j})$, the polytope $\Delta_\lambda^{(n)}$ is described as follows.

\begin{theorem}[\cite{Lit, Pab}]\label{theorem_GCpolytopeineq}
	For $G = \rmso(n)$ and $\lambda = (\lambda_1, \cdots, \lambda_\nu) \in \frak{t}_+^*$, setting $u_{j,n-j} := \lambda_j$ for $j = 1, \cdots, \nu =  \lfloor n / 2 \rfloor$,
	the Gelfand--Cetlin polytope $\Delta_\lambda^{(n)}$ in $\R^{\dim_\R \mcal{O}_\lambda^{(n)}/2}$
	consists of points $(u_{i,j})$ such that 
	\begin{equation}\label{equ_minmax_son}
		\begin{cases}
			u_{i, j+1} \geq u_{i,j}  \quad \text{if} \quad i < j, &\quad u_{i, j+1} \geq |u_{i,j}| \quad \text{if} \quad i = j \\
			u_{i, j} \geq u_{i+1,j}  \quad \text{if} \quad i+1 < j, &\quad u_{i, j} \geq |u_{i+1,j}| \quad \text{if} \quad i+1 = j 
		\end{cases}
	\end{equation}
	for $(i,j) \in \mathbb{N} \times \mathbb{N}$ with $2 \leq i+j \leq n$ and $i \leq j$.
\end{theorem}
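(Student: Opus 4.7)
The plan is to verify the two inclusions $\Phi_\lambda^{(n)}(\mcal{O}_\lambda^{(n)}) \subseteq \Delta_\lambda^{(n)}$ and $\Delta_\lambda^{(n)} \subseteq \Phi_\lambda^{(n)}(\mcal{O}_\lambda^{(n)})$ separately, where $\Delta_\lambda^{(n)}$ denotes the polytope cut out by the inequalities~\eqref{equ_minmax_son}. The key analytic input is Cauchy's interlacing theorem applied not directly to $A \in \mcal{O}_\lambda^{(n)}$ (which is skew-symmetric) but to the associated Hermitian matrix $\sqrt{-1}\,A$: its real eigenvalues are $\{\pm \lambda_i^{(n)}\}$ together with an extra $0$ when $n$ is odd, and every leading principal submatrix inherits the same property via $\sqrt{-1}\,A^{(m)}$.

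For the first (necessary) inclusion, I would fix $A \in \mcal{O}_\lambda^{(n)}$ and note that $\sqrt{-1}\,A^{(m)}$ is a principal submatrix of $\sqrt{-1}\,A^{(m+1)}$. Cauchy interlacing then yields
\[
\lambda_i^{(m+1)} \,\geq\, \lambda_i^{(m)} \,\geq\, \lambda_{i+1}^{(m+1)}
\]
for the appropriate range of $i$. Translating into the coordinates $u_{i,j} = \Phi_\lambda^{i,j}(A)$ of~\eqref{equ_gccomponentmoded}, each interior instance $i<j$ or $i+1<j$ reproduces the first inequality in each row of~\eqref{equ_minmax_son}. At the diagonal boundary $i=j$ (resp.\ $i+1=j$), the coordinate $u_{i,j}$ carries the Pfaffian sign of $A^{(2i)}$, so the corresponding interlacing bound is naturally read as $u_{i,j+1}\geq |u_{i,j}|$ (resp.\ $u_{i,j}\geq |u_{i+1,j}|$), exactly as stated.

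For the reverse (sufficient) inclusion I would argue by induction on $n$. The base $n=2$ is immediate. For the inductive step, given $(u_{i,j}) \in \Delta_\lambda^{(n)}$, the subtuple indexed by $2\leq i+j\leq n-1$ satisfies the defining inequalities of $\Delta_\mu^{(n-1)}$ for the tuple $\mu$ read off the row $i+j=n-1$, so by induction there exists $B\in \mcal{O}_\mu^{(n-1)}$ realizing it. After conjugation by an element of $\rmso(n-1)$ (which does not disturb the spectra of its own leading principal submatrices) we may assume $B = I^{(n-1)}_\mu$. It then suffices to produce $v \in \R^{n-1}$ (and a final diagonal $0$) so that
\[
A \;=\; \begin{pmatrix} B & v \\ -v^T & 0 \end{pmatrix}
\]
belongs to $\mcal{O}_\lambda^{(n)}$. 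Expanding $\det(tI-A)$ along the last row and column, using that $B$ is in block form, reduces the problem to a linear system in the squares $|v_j|^2$; the hypothesized interlacing between the $n$-th and $(n-1)$-st rows of $(u_{i,j})$ is exactly the condition for this system to admit a nonnegative solution (this is the classical inverse-eigenvalue construction behind the Gelfand--Cetlin basis). When $n$ is even, the residual sign freedom in the coordinates of $v$ is used to match $\mathrm{pf}(A)$ with the prescribed sign of $u_{\nu,\nu}= \lambda_\nu$.

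The main obstacle is the sufficiency direction, and within it the bookkeeping for the Pfaffian signs on the diagonal $i=j$. The dichotomy between odd and even $m$ forces one to treat the ``smallest'' coordinate $u_{\nu(m),\nu(m)}$ separately, and a careless application of Cauchy interlacing loses the sign information that distinguishes the two connected components of the locus $\{\det A^{(m)}\ne 0\}$ inside the family of $m\times m$ skew-symmetric matrices with fixed spectrum. A cleaner alternative I would fall back on if the explicit construction becomes cumbersome is to invoke convexity of the image: $\Phi_\lambda^{(n)}$ generates a Hamiltonian torus action on the open dense locus where it is smooth (Proposition~\ref{proposition_cisgc} together with~\cite[Theorem 3.4]{GS_GC}), so its image is a convex polytope, and it then remains only to identify its vertices with those of the polytope defined by~\eqref{equ_minmax_son} via the $T$-fixed-point data, as carried out in~\cite{Lit, Pab}.
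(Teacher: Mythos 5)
The paper does not prove Theorem~\ref{theorem_GCpolytopeineq}. It attributes the statement to the cited works, and the remark immediately following it gives only a one-line observation that the min-max principle yields the inclusion $\Phi_\lambda^{(n)}(\mcal{O}_\lambda^{(n)}) \subseteq \Delta_\lambda^{(n)}$, referring the reader to Lemma 4.1 and 4.2 of \cite{Pab2} for the reverse inclusion. Your proposal is therefore not comparable to any argument in the paper; you are supplying a self-contained proof where the paper offers a citation. Your necessary direction (Cauchy interlacing applied to the Hermitian $\sqrt{-1}\,A$) is exactly the min-max observation the paper makes, and your sufficient direction by bordered-matrix induction is the classical inverse-eigenvalue argument that \cite{Pab2} carries out. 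Your fallback via Guillemin--Sternberg convexity plus identification of vertices is a legitimate alternative, and it buys a cleaner high-level structure at the cost of needing to enumerate and match the $T$-fixed points.

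Two points in your inductive step are loose enough to flag. First, ``after conjugation by an element of $\rmso(n-1)$ \ldots\ we may assume $B = I^{(n-1)}_\mu$'' is not literally correct, because conjugating $B$ by $Q \in \rmso(n-1)$ changes its leading principal submatrices and hence destroys the GC pattern that the inductive hypothesis produced. The correct bookkeeping is: pick $Q$ with $Q^T B Q = I^{(n-1)}_\mu$, solve the characteristic-polynomial system for a border vector $w$ in the diagonalized frame (this is where the formula of Lemma~\ref{lemma_char} and the interlacing give nonnegativity of $|w_j|^2$), and then set $v := Qw$. The resulting $A$ has the original $B$ as its $(n-1)\times(n-1)$ leading block, so all lower rows of the GC pattern are inherited intact while the spectrum of $A$ is the prescribed one. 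Second, your Lagrange-interpolation solve tacitly assumes the entries of $\mu$ are distinct; when equalities occur among the $\mu_i$'s the formulas for $|w_j|^2$ have removable poles and the argument must be supplemented by a limiting or block-decomposition step. Neither issue is fatal, but both need to be filled in for the sufficiency direction to be complete.
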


\begin{example}
When $n = 4$ and $5$, 
\begin{itemize}
\item $\Delta_\lambda^{(4)}$ is determined by $\lambda_1 = u_{1,3} \geq u_{1,2} \geq | u_{1,1} |, \, u_{1,2} \geq |u_{2,2}| = \lambda_2$, 
\item $\Delta_\lambda^{(5)}$ is determined by $\lambda_1 = u_{1,4} \geq u_{1,3} \geq u_{1,2} \geq | u_{1,1} |, \, u_{1,3} \geq \lambda_2 = u_{2,3} \geq | u_{2,2} |,\, u_{1,2} \geq | u_{2,2} |$.
\end{itemize}
\end{example}

\begin{remark}
The min-max principle yields that the image of $\Phi_\lambda^{(n)}$ in~\eqref{equ_GCsystemsolambdabd} is contained in $\Delta_\lambda^{(n)}$. 
For the reverse inclusion, the reader is referred to Lemma 4.1 and 4.2 in \cite{Pab2}.
\end{remark}

\begin{proof}[Proof of Proposition~\ref{proposition_cisgc}]
As in Section~\ref{ssecCollectivePeriodicHamiltonians} or \cite[Proposition 3.1]{GS_GC}, any pair of collective Hamiltonian functions on $\mcal{O}^{(n)}_\lambda$ arising from  $G$-invariants functions on $\frak{g}^*$ is Poisson commutative with respect to $\{\cdot, \cdot \}_M$. The functional independence of $\Phi_\lambda^{(n)}$ follows from $\dim \Delta_\lambda^{(n)} = \dim_\R \mcal{O}_\lambda^{(n)}/2$.
\end{proof}

\vspace{0.1cm}
\section{The face structure of Gelfand--Cetlin polytopes}
\label{secTheFaceStructureOfGelfandCetlinPolytopes}

The goal of this section is to describe the face structure of the GC polytope $\Delta_\lambda^{(n)}$ in terms of a certain partially ordered set of pairs of subgraphs of the associated ladder diagram. 
 
For a positive integer $n \in \Z_{>0}$, set $\nu := \lfloor n / 2 \rfloor$. Let $\lambda = \{ \lambda_1, \cdots, \lambda_\nu \}$ be a $\nu$-tuple of real numbers such that 
\begin{equation}\label{equ_sequoflambbda}
\lambda_1 = \cdots = \lambda_{n_1} > \lambda_{n_1+1} = \cdots = \lambda_{n_2} > \cdots > \lambda_{n_r + 1} = \cdots = \lambda_{n_{r+1}} (=\lambda_\nu) \geq 0
\end{equation}
for some positive integers $n_1 < \cdots < n_r < n_{r+1} = \nu$. 
For a pair $(n, \lambda)$, 
we define the function $\iota_{n,\lambda}$ by
\[
\iota_{n,\lambda} \colon \left\{1, \cdots, \nu \right\} \to \left\{ 1, \cdots, r+1 \right\}, \quad
\iota_{n, \lambda} (j) := i
\] 
where $i$ is the index determined by $\lambda_j = \lambda_{n_i}$.
For simplicity, the sub-index $(n,\lambda)$ for the function $\iota_{n,\lambda}$ will be suppressed, i.e., 
$\iota(j) := \iota_{(n,\lambda)}(j)$ if there is no danger of confusion.
Let $n(\lambda)$ be the number of \emph{nonzero} elements of $\lambda$, that is 
\[
n(\lambda) :=
\begin{cases}
n_r \quad &\mbox{if $\lambda_\nu = 0$}\\
n_{r + 1} (= \nu) \quad &\mbox{if $\lambda_\nu > 0$}.
\end{cases}
\] 

We define the ladder diagram associated to the pair $(n, \lambda)$ as follows. 

\begin{definition}\label{def_ladder}
Let $\Gamma_{\Z^2}$ be the square grid graph, $\Gamma_{\Z^2} := (\Z \times \R) \cup (\R \times \Z).$
The \emph{ladder diagram $\Gamma^{(n)}_\lambda$ associated to $(n, \lambda)$} is defined by the induced subgraph of $\Gamma_{\Z^2}$ formed from the following set of vertices
\[
	V_{\Gamma^{(n)}_\lambda} := \bigcup_{j=0}^{n(\lambda) - 1}  \left\{ (a, b) \in \Z^2 \,~\Big{|}~\, j \leq a \leq j+1, j \leq b \leq n - 1 - n_{\iota(j+1)} \right\}.
\]
\end{definition}

The vertex located at $(0,0)$ is called the \emph{origin}. A vertex which is farthest from the origin with respect to the taxicap metric on $\Gamma_{\Z^2}$ is called a \emph{terminal vertex} of $\Gamma^\bullet_\lambda$. A \emph{lowest terminal vertex} of $\Gamma^\bullet_\lambda$ is the (unique) terminal vertex $(a_0,b_0)$ such that $b_0 \leq b$ for any terminal vertex $(a, b)$ of $\Gamma^\bullet_\lambda$. We denote by $\square^{(i,j)}$ the unit square whose top-right vertex is $(i,j)$ in $\Gamma_{\Z^2}$.

\begin{example}
Figure~\ref{Fig_Ladder_dia} illustrates the ladder diagrams $\Gamma^{(7)}_{3,2,1}, \Gamma^{(7)}_{2,2,0}, \Gamma^{(8)}_{5,3,3,1}$, and $\Gamma^{(8)}_{4,4,2,0}$. The purple points are the origins and the blue points are the lowest terminal vertices. The green points are the other terminal vertices. 
\begin{figure}[h]
	\scalebox{0.85}{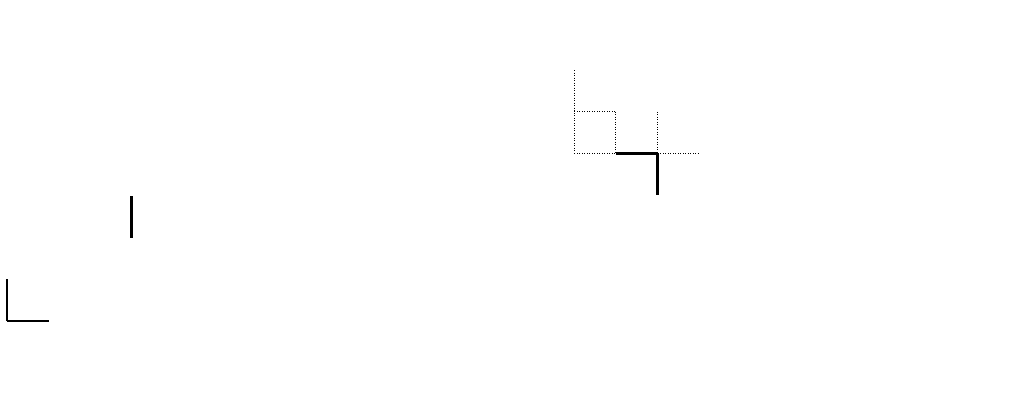}
	\caption{\label{Fig_Ladder_dia} Examples of Ladder diagrams.}	
\end{figure}
\end{example}

To describe the face structure of a GC polytope,  we need the following notions.

\begin{definition}\label{def_combinatorialobjects}
Let $\Gamma^{(n)}_\lambda$ be a ladder diagram.
\begin{itemize}
\item A \emph{positive path} on $\Gamma^{(n)}_\lambda$ is a shortest path from the origin to a terminal vertex in $\Gamma^{(n)}_\lambda$. 
\item An \emph{isogram} $\gamma$ is the subgraph of $\Gamma^{(n)}_\lambda$ obeying the following.
\begin{enumerate}
\item The graph $\gamma$ can be presented as a union of positive paths containing all terminal vertices of $\Gamma^{(n)}_\lambda$. 
\item 
Once $\gamma$ contains \emph{both} the top edge and the left edge of a  box $\square^{(j,j)}$ on the diagonal, 
 its bottom edge and right edge must be also contained in $\gamma$. 
Although the pattern in Figure~\ref{Fig_Cond_Isogram} can be a part of a union of positive paths at $\square^{(j,j)}$, it is \emph{not} allowed for the subgraph being an isogram.

\begin{figure}[h]
	\scalebox{1.2}{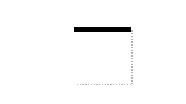}
		\vspace{-0.2cm}
	\caption{\label{Fig_Cond_Isogram} Forbidden pattern for isograms at the diagonal box $\square^{(j,j)}$.}	
\end{figure}
\end{enumerate}
\item For an isogram $\gamma$ and a non-negative integer $j$ satisfying $0 \leq j < n(\lambda)$, consider the horizontal segments of $\gamma$  over the line segment $\{ a \in \R ~|~ j \leq a \leq j+1\}$. 
Let $\beta_j(\gamma)$ be the \emph{lowest} horizontal line segment of $\gamma$ among the horizontal segments.
Then $\beta_j(\gamma)$ can be expressed as 
\begin{equation}\label{equ_beta2}
\beta_j(\gamma) = \{ (i, y(\beta_{j}(\gamma)) ) \mid j \leq i \leq j + 1 \}
\end{equation}
where $y(\beta_{j}(\gamma)) \in \mathbb{N} \cup \{ 0\}$ is the second component of $\beta_j(\gamma)$.
The \emph{base} $\beta(\gamma)$ of an isogram $\gamma$ is defined to be a (unique) positive path containing all $\beta_j(\gamma)$'s.
Thus, the base $\beta_j(\gamma)$ is the {\em lowest} positive path contained in $\gamma$.

\item For each non-negative integer $j$ with $0 \leq j < n(\lambda)$, we choose a line segment or a union of line segments (say $\delta_j(\gamma)$) obeying the following rule. 
\begin{enumerate}
\item 
For each $j$ with $y(\beta_{j}(\gamma)) > j+1$, we take 
\[
	\delta_j(\gamma) := \beta_j(\gamma).
\]
\item If $\gamma$ contains all four edges of $\square^{(j+1,j+1)}$,
we take two horizontal edges of $\square^{(j+1,j+1)}$ as $\delta_j(\gamma)$, i.e., 
\[
\delta_j(\gamma) := \beta_j(\gamma) \cup ( \beta_j (\gamma) + (0,1)).
\]
\item If we are not in the above two cases{\footnote{In other words, the case (3) happens when the second component $y(\beta_{j}(\gamma))$ is $\leq j +1$ and $\gamma$ does \emph{not} contain all four edges of $\square^{(j+1,j+1)}$.}},
we have two options to choose
\[
\delta_j(\gamma) := 
\begin{cases} 
\,\, \left\{ (a,b) \in \R^2 ~{|}~ j \leq a \leq j+1,  b = j \right\} \textup{ or } \\
\,\, \left\{ (a,b) \in \R^2 ~{|}~ j \leq a \leq j+1,  b = j+1 \right\}.
\end{cases}
\]
\end{enumerate}
A \emph{coastline} $\delta$ of $\gamma$ is the graph which can be expressed as the union of all positive paths whose horizontal edges are contained in the set of chosen horizontal segments $\delta_j (\gamma)$'s.
\end{itemize}
\end{definition}

\begin{remark}
Even though two points satisfy the same inequalities in~\eqref{equ_minmax_son}, the diffeomorphism types of the fibers over the two points might be different. The base lines play  important roles to distinguish their diffeomorphism types later on. 
Also, coastlines are necessary to obtain the one-to-one correspondence. There exist multiple faces corresponding to the same isogram and coastlines are used to distinguish the faces having the same isogram.
\end{remark}

\begin{definition}
A \emph{face} of $\Gamma^{(n)}_\lambda$ is a pair consisting of an isogram $\gamma$ and one single choice of its coastline $\delta(\gamma)$. The {\em dimension} of $\gamma$ is defined to be the number of closed regions bounded by $\gamma$, the number of minimal cycles in $\gamma$.
\end{definition}

\begin{example}\label{exa_isograms}
Let us reconsider the ladder diagram $\Gamma^{(7)}_{3,2,1}$ in Figure~\ref{Fig_Ladder_dia} as an example. The first two graphs in Figure~\ref{Fig_Isograms} are examples of isograms. The other graphs are \emph{not} isograms. 
For instance, the last graph has the forbidden pattern at $\square^{(1,1)}$ (see Figure~\ref{Fig_Cond_Isogram}) so that it is \emph{not} an isogram. 
The first isogram in Figure~\ref{Fig_Isograms} comes with eight possible choices of coastlines. The second isogram in Figure~\ref{Fig_Isograms} admits two possible coastlines, which are depicted in Figure~\ref{Fig_Faces}.
\end{example}

\begin{figure}[h]
		\vspace{-0.5cm}
	\scalebox{0.9}{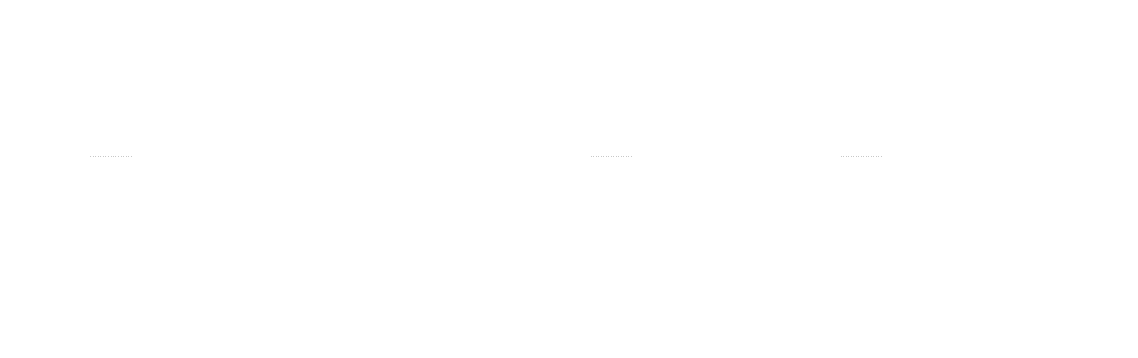}
		\vspace{-0.2cm}
	\caption{\label{Fig_Isograms} Examples of isograms.}	
\end{figure}

\begin{figure}[h]
		\vspace{-0.3cm}
	\scalebox{0.9}{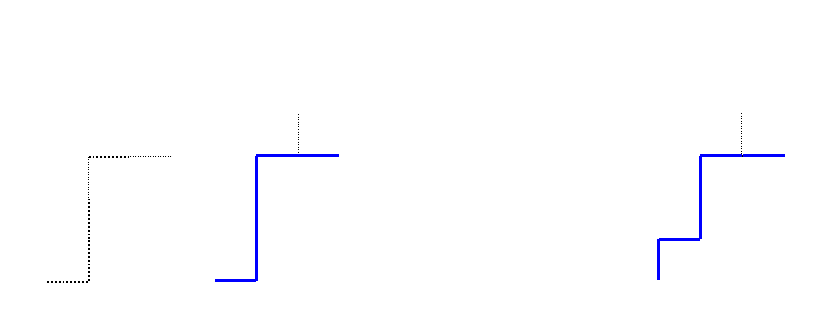}
			\vspace{-0.3cm}
	\caption{\label{Fig_Faces} Examples of faces.}	
\end{figure}

Define a partial ordering $\leq$ on the set of faces of $\Gamma^{(n)}_\lambda$ as follows. $(\gamma^\prime, \delta(\gamma^\prime)) \leq (\gamma, \delta(\gamma))$ if the following two conditions hold.
\begin{enumerate}
\item (Isogram) $\gamma^\prime \subseteq \gamma$
\item (Coastline) For each $j$ with $y(\beta_{j}(\gamma))  \leq j + 1$ in~\eqref{equ_beta2}, $\delta_j(\gamma^\prime) \subseteq \delta_j(\gamma).$
\end{enumerate}
Here, $\subseteq$ is meant to be a subset. 
Also, consider the partial ordering $\leq$ on the set of faces of $\Delta_\lambda^{(n)}$ defined by the set-theoretic inclusion. That is, for two faces $f_1$ and $f_2$ of $\Delta_\lambda^{(n)}$, $f_1 \leq f_2$ if and only if $f_1 \subseteq f_2$ as a set. 

Those faces of $\Gamma^{(n)}_\lambda$ can be used to understand the face structure of the GC polytope $\Delta_\lambda^{(n)}$.

\begin{theorem}\label{thm_diacorrespo}
Let $\Delta_\lambda^{(n)}$ be the Gelfand--Cetlin polytope and $\Gamma_\lambda^{(n)}$ the ladder diagram associated to $n$ and $\lambda$.
Then there exists a bijective map
	\[
			\Psi \colon \{~\text{faces of}~\Gamma_\lambda^{(n)} \} \longrightarrow \{~\text{faces of} ~\Delta_\lambda^{(n)} \}
	\]
such that for faces $(\gamma, \delta(\gamma))$ and $(\gamma', \delta(\gamma'))$ of $\Gamma_\lambda^{(n)}$,
\begin{itemize}
	\item (Order-preserving) $(\gamma, \delta(\gamma)) \leq (\gamma', \delta(\gamma'))$ if and only if $\Psi((\gamma, \delta(\gamma))) \leq \Psi((\gamma', \delta(\gamma')))$,
	\item (Dimension) $\dim \Psi((\gamma), \delta(\gamma)) = \dim \gamma$.
\end{itemize}
\end{theorem}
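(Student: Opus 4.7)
The plan is to define $\Psi$ explicitly by assigning each edge of $\Gamma_\lambda^{(n)}$ to one of the defining inequalities of $\Delta_\lambda^{(n)}$ in Theorem~\ref{theorem_GCpolytopeineq}. Specifically, I would let horizontal edges of the diagram encode the ``vertical'' inequalities $u_{i,j+1} \geq u_{i,j}$ (with absolute value when $i=j$) and vertical edges encode the ``horizontal'' inequalities $u_{i,j} \geq u_{i+1,j}$ (with absolute value when $i+1=j$). An isogram $\gamma$ then records which inequalities have become equalities, and the coastline $\delta(\gamma)$ records the sign choice $u_{j,j} \geq 0$ vs.\ $u_{j,j} \leq 0$ at each diagonal box $\square^{(j,j)}$ where an absolute value is involved. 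Accordingly, I define $\Psi(\gamma, \delta(\gamma))$ to be the intersection of $\Delta_\lambda^{(n)}$ with the affine subspace cut out by the equalities and sign conditions read off from $(\gamma, \delta(\gamma))$.

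Next, I verify well-definedness. The crucial consistency check is at diagonal boxes: if $\gamma$ contains both the top and left edges of $\square^{(j,j)}$, then the two imposed equalities $u_{j,j+1} = |u_{j-1,j}|$-type constraints together with the ambient inequalities force $u_{j,j}=0$, which in turn automatically promotes the bottom and right edges to equalities. This is precisely what condition~(2) of Definition~\ref{def_combinatorialobjects} and the forbidden-pattern exclusion in Figure~\ref{Fig_Cond_Isogram} encode at the combinatorial level, and it ensures $\Psi(\gamma,\delta(\gamma))$ is a nonempty affine subset of $\Delta_\lambda^{(n)}$. Furthermore, the three cases defining $\delta_j(\gamma)$ match exactly the three situations that can occur at $\square^{(j,j)}$: a forced positive sign when $y(\beta_j(\gamma))>j+1$, a forced zero when all four edges of $\square^{(j+1,j+1)}$ lie in $\gamma$, and a genuine binary sign choice otherwise.

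To establish bijectivity, I would construct $\Psi^{-1}$ directly. Given a face $f$ of $\Delta_\lambda^{(n)}$, I collect the edges corresponding to the inequalities that become equalities on $f$ to form a subgraph $\gamma_f \subseteq \Gamma_\lambda^{(n)}$, and then define $\delta_j(\gamma_f)$ by reading off the sign of $u_{j,j}$ on the relative interior of $f$, following the case analysis above in the ambiguous regime. I would check that $\gamma_f$ satisfies the axioms of an isogram: it is a union of positive paths passing through every terminal vertex, because each row and column of the ladder diagram corresponds to a chain of inequalities that must propagate to a terminal coordinate $\lambda_j$, and the forbidden pattern is excluded by the diagonal-box argument of the previous paragraph. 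Order preservation then becomes immediate, as ``smaller face'' means ``more tight inequalities'' which means ``larger isogram and coastline.''

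Finally, I would prove the dimension formula $\dim \Psi(\gamma,\delta(\gamma)) = \dim\gamma$ by induction on the size of $\Gamma_\lambda^{(n)}$, peeling off one unit cell at a time from an outer corner. Alternatively, one may argue linearly: among the coordinates $u_{i,j}$, each minimal cycle of $\gamma$ corresponds to exactly one degree of freedom that survives the imposed linear relations, while every non-cycle edge lowers the dimension by one. The main obstacle I expect is precisely the rigorous handling of the diagonal boxes: the interplay between absolute-value inequalities, the forbidden pattern, and the case split in the definition of $\delta_j(\gamma)$ must be tracked carefully to make sure the bijection is clean on every boundary case, especially when some $\lambda_\nu = 0$ shrinks the ladder diagram. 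Once that bookkeeping is in place, both the order-preservation and the dimension count become formal consequences of how $\Psi$ translates combinatorial edges into linear equalities.
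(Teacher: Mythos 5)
Your overall strategy---reading off equalities from the isogram and sign conditions from the coastline, defining $\Psi$ by intersecting $\Delta_\lambda^{(n)}$ with the resulting affine constraints, and then constructing the inverse by recording which inequalities saturate on a given face---is exactly the construction the paper sketches in the remarks following Theorem~\ref{thm_diacorrespo}, which in turn defers the technical verification to the analogous argument in~\cite[Theorem~1.11]{ACK}. So your plan is the right one in spirit.

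However, you appear to have the encoding convention reversed, and the reversal is not merely cosmetic---it contradicts both bullet points of the theorem. The paper's convention is that an edge belonging to $\gamma$ marks a \emph{strict} inequality, so that an \emph{absent} edge imposes the equality $v_{i,j}=v_{i,j+1}$ or $v_{i,j}=v_{i+1,j}$; thus a larger isogram means fewer imposed equalities, hence a larger face, which is precisely what makes $\Psi$ order-preserving and makes $\dim\Psi(\gamma,\delta)$ equal to the number of minimal cycles of $\gamma$. You say the isogram ``records which inequalities have become equalities'' and conclude that ``smaller face means more tight inequalities which means larger isogram''---this would make $\Psi$ order-\emph{reversing}, and it would also break the dimension count (more edges would then mean more equalities and a lower-dimensional face, while more edges tend to create more cycles in $\gamma$). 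You need to flip the convention throughout: $\gamma$ collects the edges where the corresponding min-max inequality stays strict. Relatedly, your well-definedness argument at $\square^{(j,j)}$ doesn't go through as stated: containing the top and left edges of a diagonal box (in either convention) does not force $u_{j,j}=0$; the constraint $(2)$ of Definition~\ref{def_combinatorialobjects} and the forbidden pattern in Figure~\ref{Fig_Cond_Isogram} are instead needed to guarantee that $\gamma$ can be realized as the locus of strict inequalities for a genuine point of the polytope (i.e., that the asserted face is nonempty with the right dimension), and this has to be argued by exhibiting such a point, not by deducing $u_{j,j}=0$. Fixing the convention and redoing the diagonal-box analysis are the two concrete repairs your proposal needs.
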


\begin{example}\label{exa_mcalobface} 
Let $n = 5$ and $\lambda = (3, 0)$. Consider the co-adjoint orbit $\mcal{O}^{(5)}_\lambda$, the orthogonal Grassmannian $\mathrm{OG}(1, \C^5)$. Then, the GC polytope $\Delta_\lambda^{(5)}$ is defined by the intersection of the four half-planes in $\R^4$ $\colon$ 
$$
3 - u_{1,3} \geq 0, \, u_{1,3} - u_{1,2} \geq 0, \, u_{1,2} - u_{1,1} \geq 0, \, u_{1,2} + u_{1,1} \geq 0, 
$$
which is a simplex as in Figure~\ref{Fig_OG15}. 

\begin{figure}[h]
	\scalebox{0.87}{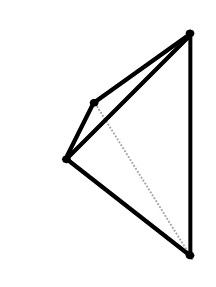}
	\caption{\label{Fig_OG15} The GC polytope $\Delta^{(5)}_{(3,0)}$}	
\end{figure}

It has four vertices $w_0 = (u_{1,3} = 0, u_{1,2} = 0, u_{1,1} = 0), w_1 = (3,0,0), w_2 = (3,3,3), w_3 = (3,3, -3)$ corresponding to the following pairs of an isogram and a coastline as in Figure~\ref{Fig_0OG15}.

\begin{figure}[h]
				\vspace{-0.1cm}
	\scalebox{0.6}{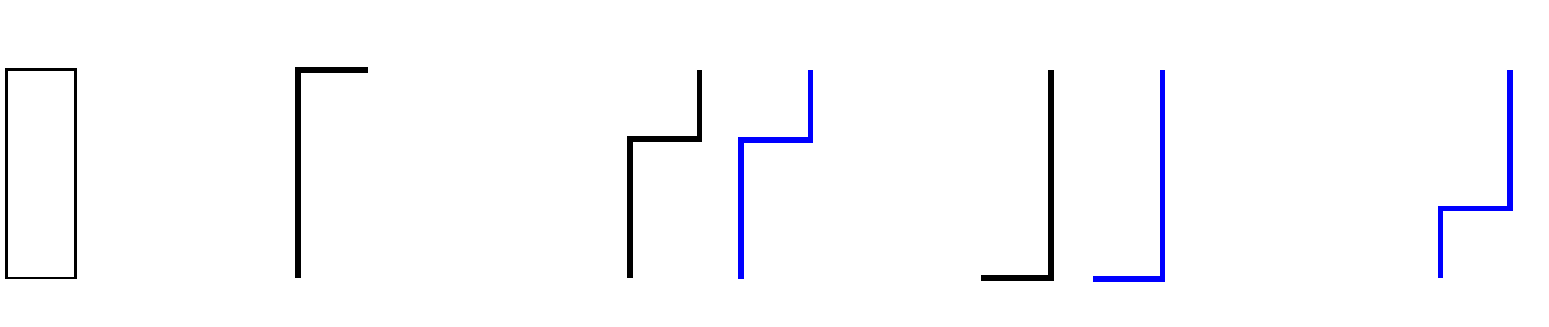}
				\vspace{-0.2cm}
	\caption{\label{Fig_0OG15} The zero dimensional faces of $\Delta^{(5)}_{(3,0)}$}	
				\vspace{-0.2cm}
\end{figure}

The polytope has six edges, four two dimensional faces, and one three dimensional face, whose corresponding faces in the diagram are in order. 

\begin{figure}[h]
	\scalebox{0.6}{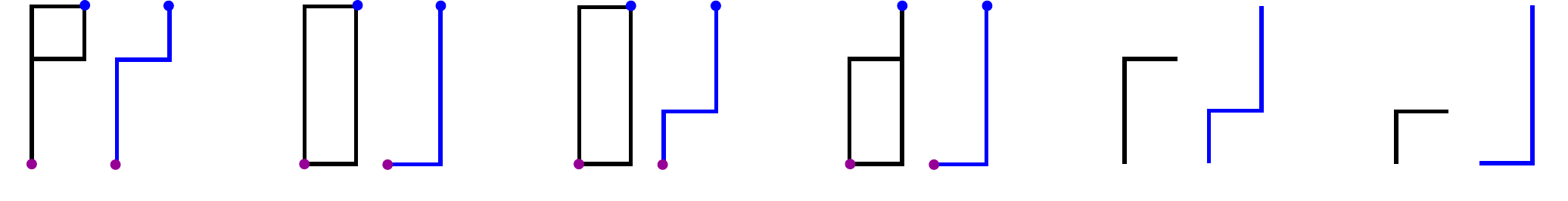}
					\vspace{-0.2cm}
	\caption{\label{Fig_1OG15} The one dimensional faces of $\Delta^{(5)}_{(3,0)}$}	
					\vspace{-0.2cm}
\end{figure}
\begin{figure}[h]
	\scalebox{0.6}{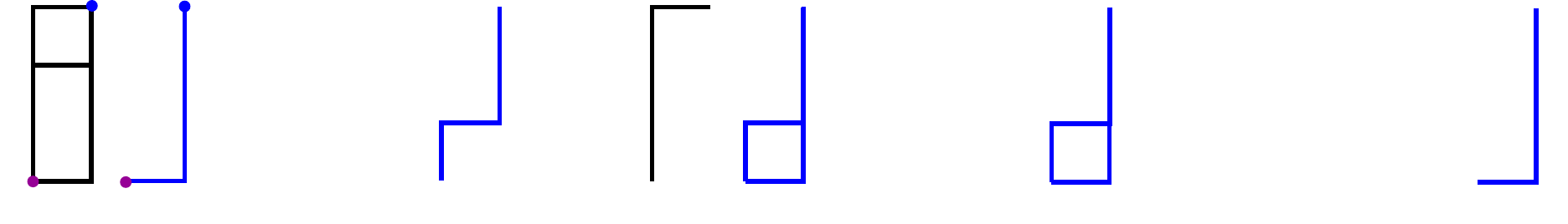}
					\vspace{-0.2cm}
	\caption{\label{Fig_23OG15} The two and three dimensional faces of $\Delta^{(5)}_{(3,0)}$}	
					\vspace{-0.2cm}
\end{figure}
\end{example}

Theorem \ref{thm_diacorrespo} is an $\rmso(n)$-type analog of \cite[Theorem 1.11]{ACK} and the idea of the proof is essentially the same as the one of
\cite[Theorem 1.11]{ACK}. Thus we leave the proof to the reader with some remarks as below. 

\begin{itemize}
	\item As mentioned in Theorem \ref{theorem_GCpolytopeineq}, the GC polytope $\Delta_\lambda^{(n)}$ can be described in terms of the coordinate system $\{u_{i,j} \}$ 
	         satisfying \eqref{equ_minmax_son} where each $u_{i,j}$ corresponds to the unit box $\square^{(i,j)}$ in $\Gamma_\lambda^{(n)}$. 
	\item We can describe each point in $\Delta_\lambda^{(n)}$ as the filling in the boxes of $\Gamma_\lambda^{(n)}$ with the components $v_{i,j}$ where
	\[
		v_{i,j} = 
			\begin{cases}
				u_{i,j} \quad &\mbox{if $i \neq j$} \\
				|u_{i,j}| \quad &\mbox{if $i = j$}
			\end{cases}
	\] 
	For any given face $\Gamma = (\gamma, \delta)$ in $\Gamma^{(n)}_\lambda$, the corresponding face is supported by the intersection of the following$\colon$ 
	\[
		\begin{cases}
			v_{i,j} = v_{i, j+1} \quad &\mbox{if the isogram $\gamma$ does not contain the edge connecting $(i-1,j)$ and $(i, j)$. }\\
			v_{i,j} = v_{i+1, j} \quad &\mbox{if the isogram $\gamma$ does not contain the edge connecting $(i,j)$ and $(i, j-1)$.} \\
			u_{i,j} \geq 0 \quad &\mbox{if $\square^{(i,j)}$ is above the highest positive path contained in the coastline $\delta$.} \\
			u_{i,j} \leq 0 \quad &\mbox{if $\square^{(i,j)}$ is below the lowest positive path contained in the coastline $\delta$.} \\
		\end{cases}
	\]
\end{itemize}

\vspace{0.1cm}
\section{The topology of Gelfand--Cetlin fibers}
\label{secTheTopologyOfGelfandCetlinFibers}

This section is devoted to describing a Gelfand--Cetlin fiber of a co-adjoint $\mathrm{SO}(n)$-orbit in terms of the total space of an iterated bundle described in Theorem \ref{thmx_main}.
For a point $\bf{u}$ in $\Delta_\lambda^{(n)}$, the isogram $\gamma$ corresponding to the face containing $\bf{u}$ in its relative interior is obtained by the correspondence in 
Theorem \ref{thm_diacorrespo}.
We will devise a constructive combinatorial method to decode the topology of fibers at stages of the iterated bundle from $\gamma$.

The main theorem is stated as follows.

\begin{theorem}\label{thm_fiber}
	Let $\Phi_\lambda^{(n)}$ be the Gelfand--Cetlin system on
	the co-adjoint $\mathrm{SO}(n)$-orbit $\mathcal{O}^{(n)}_\lambda$ for $\lambda \in \mathfrak{t}^*_+$ and $\Delta^{(n)}_\lambda$ the corresponding Gelfand--Cetlin
	polytope. For any point ${\bf{u}} \in \Delta_\lambda^{(n)}$, the fiber 
	$\left(\Phi_\lambda^{(n)} \right)^{-1}({\bf{{u}}})$ is an isotropic submanifold of $\left( \mcal{O}_\lambda^{(n)}, \omega_\lambda \right)$ and is the
	total space of an iterated bundle
	\begin{equation}\label{equ_iteratededed}
		\left(\Phi_\lambda^{(n)} \right)^{-1}({\bf{{u}}}) = E_{n} \stackrel{q_{n}} \longrightarrow E_{n-1} \stackrel{q_{n-1}} \longrightarrow \cdots \stackrel{q_4} \longrightarrow E_3
		\stackrel{q_3} \longrightarrow E_2= \{\mathrm{point}\}
	\end{equation}
	such that the fiber at each stage is either a point or a product of spheres. 
	Moreover, any two fibers $\left(\Phi_\lambda^{(n)} \right)^{-1}({\bf{{u}}}_1)$ and $\left(\Phi_\lambda^{(n)} \right)^{-1}({\bf{{u}}}_2)$ are diffeomorphic if $\bf{{u}}_1$ and $\bf{{u}}_2$ are contained in the relative interior of the same face of $\Delta_\lambda^{(n)}$.
\end{theorem}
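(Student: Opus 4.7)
The plan is to construct the iterated bundle by peeling off one subgroup level at a time, using the multiplicity-free property (Corollary~\ref{proposition_multiplicity_free}) to identify each intermediate fiber.

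For $\mathbf{u} \in \Delta^{(n)}_\lambda$, let $\lambda^{(m)} \in \frak{t}^*_{m,+}$ denote the spectrum at level $m$ prescribed by $\mathbf{u}$ (with $\lambda^{(n)} = \lambda$), and let $\mathbf{u}^{<m}$ denote the restriction of $\mathbf{u}$ to GC functions at levels $<m$. Set
$$
E_m := \bigl(\Phi^{(m)}_{\lambda^{(m)}}\bigr)^{-1}(\mathbf{u}^{<m}) \subset \mcal{O}^{(m)}_{\lambda^{(m)}},
$$
so that $E_n = (\Phi^{(n)}_\lambda)^{-1}(\mathbf{u})$ and $E_2 = \{I^{(2)}_{\lambda^{(2)}}\}$ is a point. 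Define $q_m : E_m \to E_{m-1}$ by $A \mapsto A^{(m-1)}$; the map lands in $E_{m-1}$ since the GC values of $A^{(m-1)}$ are precisely $\mathbf{u}^{<m-1}$, and its surjectivity follows from the eigenvalue interlacing encoded in Theorem~\ref{theorem_GCpolytopeineq}.

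The crucial step is to identify the fibers of $q_m$. The subgroup $\mathrm{SO}(m-1) \subset \mathrm{SO}(m)$ acts on $\mcal{O}^{(m)}_{\lambda^{(m)}}$ by block conjugation with moment map $\kappa_{m-1} : A \mapsto A^{(m-1)}$, and by Corollary~\ref{proposition_multiplicity_free} this action is multiplicity-free. Thus, for each $B \in E_{m-1}$, the isotropy subgroup $\mathrm{SO}(m-1)_B$ acts transitively on $q_m^{-1}(B)$, which is therefore a homogeneous space. To identify it concretely as a product of spheres, I parametrize an extension by a vector $v$ via
$$
A = \begin{pmatrix} B & v \\ -v^T & 0 \end{pmatrix}
$$
and compute $\mathrm{SO}(m-1)_B$ from the block structure of $B$ dictated by the multiplicity pattern of $\lambda^{(m-1)}$. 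The combinatorial face data (isogram and coastline from Theorem~\ref{thm_diacorrespo}) encodes which eigenvalues coincide between levels $m$ and $m-1$ and whether zeros appear; a case analysis then identifies the orbit of $\mathrm{SO}(m-1)_B$ on the admissible $v$'s as a product of spheres, with even-dimensional factors arising exactly in the $\mathrm{SO}(n)$-specific zero and repeated-eigenvalue configurations.

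Smoothness of $E_n$ then follows inductively from smoothness of $E_2$ and of each fiber. Isotropy comes from Lemma~\ref{lemma_preserve} together with the Poisson commutativity of the collective Hamiltonians constituting $\Phi^{(n)}_\lambda$ (Proposition~\ref{proposition_cisgc}): on the open dense locus where all GC functions are smooth, the tangent space to the fiber is spanned by the Hamiltonian vector fields $X_f$ of these functions, and $\omega(X_f, X_g) = \{f,g\}_M = 0$, with isotropy extending by continuity. The diffeomorphism invariance across a single face is then immediate: by Theorem~\ref{thm_diacorrespo} a fixed face fixes the isogram and coastline, hence the eigenvalue-multiplicity data at every level, hence produces isomorphic iterated bundles.

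The main obstacle will be the explicit identification of each fiber as a product of spheres rather than merely as an abstract homogeneous space. This requires careful combinatorial bookkeeping that matches the local structure of the isogram at each stage with the stabilizer block decomposition of $B$, and that correctly handles the exceptional patterns producing even-dimensional sphere factors that are absent in the $\mathrm{U}(n)$-case.
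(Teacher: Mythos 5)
Your overall strategy — peeling off $\mathrm{SO}(m-1)$ levels, using multiplicity-freeness to identify the intermediate fibers as homogeneous spaces, and then recognizing them as products of spheres — is the same as the paper's. But your isotropy argument has a genuine gap, and it is exactly the gap that makes the isotropy statement nontrivial here.

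You propose to show isotropy by arguing that, on the open dense locus $\mcal U \subset \mcal{O}_\lambda^{(n)}$ where the GC functions are smooth, the tangent space to the fiber is spanned by Hamiltonian vector fields $X_f$ satisfying $\omega(X_f,X_g)=\{f,g\}_M=0$, and then ``extending by continuity.'' This works only for fibers over interior points of $\Delta_\lambda^{(n)}$ — but those fibers are regular Lagrangian tori, and isotropy is not the issue for them. The interesting fibers are those over boundary points of the polytope, and for such a point $\mathbf u$ the \emph{entire} fiber $\left(\Phi_\lambda^{(n)}\right)^{-1}(\mathbf u)$ lies in the complement of $\mcal U$: if $\mathbf u$ satisfies some GC equalities, then every $A$ in the fiber has repeated or vanishing eigenvalues in the relevant leading principal submatrices, so the GC functions are not smooth at any point of the fiber and the Hamiltonian vector fields $X_f$ do not exist there. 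There is no dense subset of the fiber inside $\mcal U$ from which to extend, and the tangent distribution jumps dimension across the strata, so continuity cannot rescue the argument. The paper avoids this entirely: by Lemma~\ref{lemma_transitive}, the $\mathrm{SO}(m-1)$-action is transitive on $\mcal{O}_{\fa,\fb}$, so every tangent vector to the fiber is a \emph{fundamental} vector field of the group action (not a Hamiltonian vector field of a GC function), and Lemma~\ref{lemma_preserve} applied level by level pushes the symplectic pairing down to $\mcal{O}_{\mathbf u_2}^{(2)}=\{\mathrm{point}\}$, where it must vanish. You should replace your open-dense continuity step with an argument of this kind.

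Two smaller remarks. First, your claim that smoothness of $E_n$ ``follows inductively from smoothness of $E_2$ and of each fiber'' is too quick: you also need $q_m$ to be a locally trivial submersion, which the paper obtains from the equivariant rank theorem applied to the equivariant map $\rho_{\fa,\fb}$ and transitivity of the $\mathrm{SO}(m-1)$-action (Proposition~\ref{prop_projbundle}). Second, your diffeomorphism-invariance argument asserts that fixing the isogram ``produces isomorphic iterated bundles,'' but this requires constructing explicit $\mathrm{SO}(m-1)$-equivariant bundle isomorphisms intertwining the two sequences of projections (the $\eta^m_t$, $\eta^m_{m-1,t}$ of Lemma~\ref{lemma_fa}, \ref{lemma_fb}) and an isotopy argument (Lemma~\ref{lemma_isotopy}) to compare the pullback bundles; this is nontrivial bookkeeping, not an immediate consequence of having the same combinatorial data.
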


 The proof will be given throughout Section~\ref{secIteratedBundleStructureOfGelfandCetlinFibers} and~\ref{sectionproofoflemma}.

\begin{remark}
The authors with Oh in \cite[Theorem A]{CKO} proved that every GC fiber of type $A$ has an iterated bundle structure and is an isotropic submanifold. Lane \cite{Lane2} described GC fibers of Hamiltonian $\mathrm{U}(n)$-manifolds (Hamiltonian $\mathrm{SO}(n)$-manifolds as well) as the total space of an iterated bundle.
We remark that his description is slightly different from one in Theorem~\ref{thm_fiber}$\colon$ the iterated bundle in \cite{Lane2} has a torus base and does not have circle factors in any of the fibers at stages.
Also, Bouloc--Miranda--Zung in \cite{BMZ} proved that any GC fiber of type $A$ is an isotropic submanifold and expressed the fibers as a 2-stage quotient of some compact Lie group (which 
they call a {\em coarse symmetry group}). 
\end{remark}

\begin{remark}\label{rmk_fibertypeseven}
Note that the fiber in a stage of~\eqref{equ_iteratededed} for the $\mathrm{SO}(n)$-type can have an \emph{even}-dimensional spherical factor, while either a point or a product of odd-dimensional spheres can only occur in the stages of the iterated bundles for the GC fibers in the co-adjoint $\mathrm{U}(n)$-orbits (cf. \cite{CKO})
\end{remark}

We begin by introducing blocks which will be used for our combinatorial process.
For each lattice point $(a, b) \in \Z^2$, let $\blacksquare^{(a,b)}$ be the closed region bounded by $\square^{(a,b)}$, that is,
\[
\blacksquare^{(a,b)} := \{ (x,y) \in \R^2 ~|~ a-1 \leq x \leq a, ~b-1 \leq y \leq b \}.
\] 

\begin{definition}
For each $k \in \N_{\geq 3}$, the {\em $W_k$-block} is the closed region defined by 
\begin{equation}\label{eq_wkunion}
W_k =  \bigcup_{(a,b)} {\blacksquare}^{(a,b)}
\end{equation}
where the union is taken over all $(a,b) \in \mathbb{N}^2$ satisfying
$k - 1 \leq a+ b \leq k$ and $a \leq b$.

A lattice point closest from the origin in the block $W_k$ is called a \emph{bottom point}. 
\end{definition}

$W$-blocks are depicted in Figure~\ref{Fig_Wblocks} where the red points are the origin, the purple points are bottom points, and the blue points indicate the vertices over which the union is taken in~\eqref{eq_wkunion}. 

\begin{figure}[h]
	\scalebox{0.7}{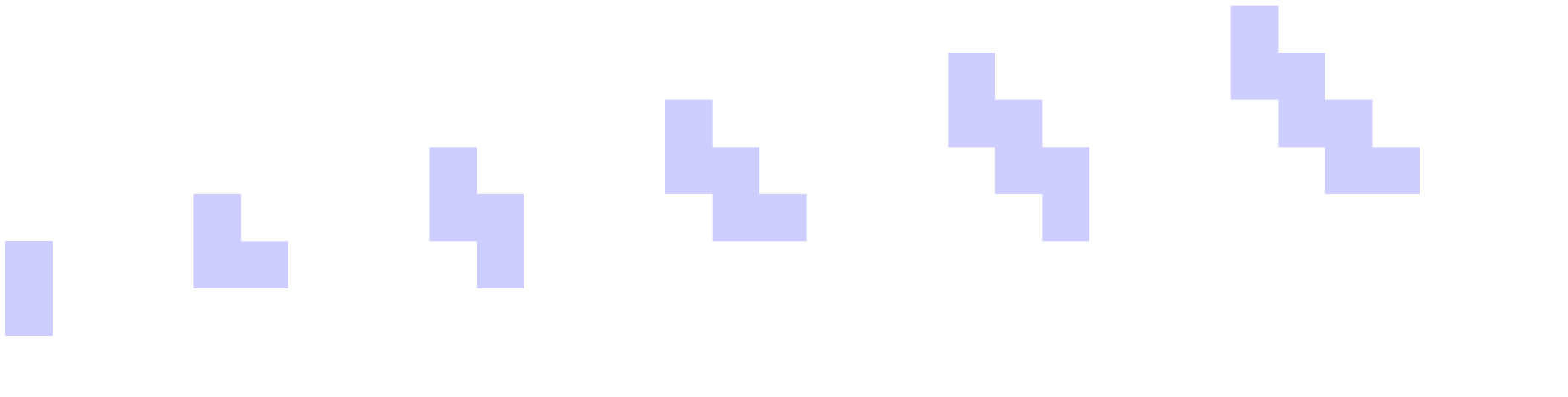}
	\vspace{-0.3cm}
	\caption{\label{Fig_Wblocks} $W$-blocks.}	
\end{figure}

\begin{definition} Let $k \in \N$ be a given positive integer.
\begin{enumerate}
\item (Definition 5.7. in \cite{CKO}) For each $(a_0, b_0) \in \Z^2$, the \emph{$M_k$-block} at $(a_0, b_0)$
is the closed region defined by
\begin{equation*}\label{eq_mkunion}
M_k (a_0, b_0) := \bigcup_{(a_0, b_0) + (a,b)} {\blacksquare}^{(a,b)}
\end{equation*}
for some $(a_0, b_0) \in \Z^2$ where the union is taken over all $(a,b) \in \mathbb{N}^2$ satisfying
$$k + 1 \leq a+ b \leq k + 2, \, (a,b) \neq (k+1, 1), \mbox{ and } (a,b) \neq (1, k+1).$$

\item An \emph{$N_k$-block} is the closed region defined by
\begin{equation*}\label{eq_mkunion}
N_k (a_0, b_0) :=  \bigcup_{(a_0, b_0) + (a,b)} {\blacksquare}^{(a,b)}
\end{equation*}
for some $(a_0, b_0) \in \Z^2$ where the union is taken over all $(a,b) \in \mathbb{N}^2$ satisfying
$$k + 1 \leq a+ b \leq k + 2 \mbox{ and } (a,b) \neq (1, k+1).$$
\end{enumerate}
\end{definition}

\begin{figure}[h]
\vspace{0.1cm}
	\scalebox{0.82}{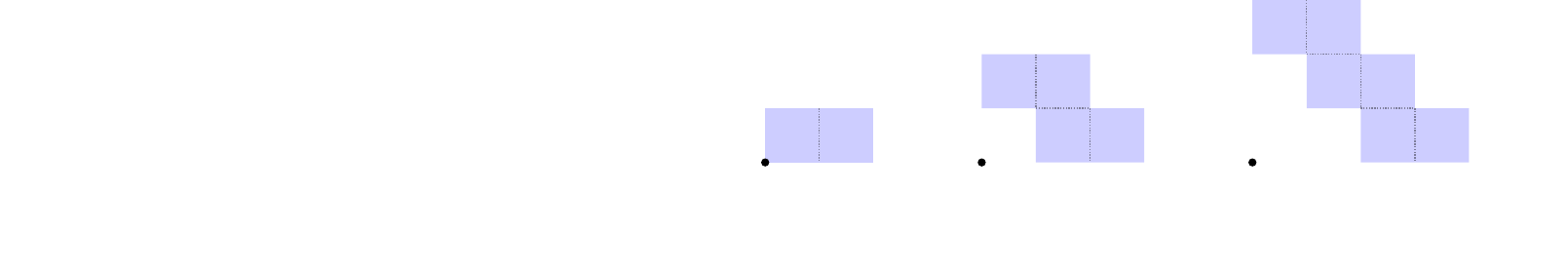}
	\vspace{-0.2cm}
	\caption{\label{Fig_MNblocks} $M$-blocks and $N$-blocks.}	
\end{figure}

For a given point ${\bf{u}} \in \Delta_\lambda^{(n)}$, let $f$ be the face of $\Delta_\lambda^{(n)}$ containing $\bf{u}$ in its relative interior and 
$(\gamma_f, \delta_f)$ the corresponding face of $\Gamma^{(n)}_\lambda$.  
Now we describe fibers at stages for an iterated bundle structure of $\left(\Phi_\lambda^{(n)}\right)^{-1}({\bf u})$ in the following steps. 
(Compare with the $\rmu(n)$-cases illustrated in \cite[Section 5]{CKO}.)
\begin{itemize}
\item \textbf{Step 1.} Overlapping the $W$-block $W_k$ and the isogram $\gamma_f$, cut $W_k$ along $\gamma_f$ into several closed regions.
\item \textbf{Step 2.} For each cut region $\mcal{D}$, assign a topological space $S_k(\mcal{D})$ as follows$\colon$
\begin{equation}\label{equ_fibersfromblocks}
S_k(\mcal{D}) := 
\begin{cases}
\mathbb{S}^{2\ell - 1} \quad &\mbox{ if $\mcal{D}$ is an $M_\ell$-block and $\mcal{D}$ contains a bottom point of $W_k$} \\
\mathbb{S}^{2\ell } \quad &\mbox{ if $\mcal{D}$ is an $N_\ell$-block and $\mcal{D}$ is below the base $\beta(\gamma_f)$.} \\
\textup{point} \quad &\mbox{otherwise.}
\end{cases}
\end{equation}
\item \textbf{Step 3.} Put
\begin{equation}\label{equ_fibersatkthstage}
S_k(f) := \prod_\mcal{D} S_k (\mcal{D})
\end{equation}
where the product is over all cut closed regions of $W_k$ along $\gamma_f$.
\end{itemize}
Following the three steps above, the total space of the bundle, the GC fiber over $\bf{{u}}$, can be described as follows.

\begin{proposition}\label{proposition_mainiterated}
Let $\bf{{u}}$ be a point contained in the relative interior of a face $f$ of $\Delta_\lambda^{(n)}$. Let $(\gamma_f, \delta_f)$ be the corresponding face to $f$ in $\Gamma_\lambda^{(n)}$. 
Then the fiber $\left(\Phi_\lambda^{(n)}\right)^{-1}(\bf{{u}})$ is diffeomorphic to the total space of an iterated bundle
\begin{equation}\label{equ_iteratedbundlestr}
\left(\Phi_\lambda^{(n)}\right)^{-1}({\bf{u}} ) = \overline{S_{n}}(f) \xrightarrow{q_{n}} \overline{S_{n-1}}(f) \xrightarrow{} \cdots \xrightarrow{q_{4}} \overline{S_{3}}(f) \xrightarrow{q_{3}} \overline{S_{2}}(f) := \{\textup{point}\}
\end{equation}
where $q_k \colon \overline{S_{k}}(f) \xrightarrow{} \overline{S_{k-1}}(f)$ is an $S_k(f)$-bundle over $\overline{S_{k-1}}(f)$. 
In particular, the dimension of the fiber $\left(\Phi_\lambda^{(n)} \right)^{-1}(\bf{{u}})$ can be computed as 
\begin{equation}\label{eq_dimformulaprop}
\dim \left(\Phi_\lambda^{(n)}\right)^{-1}({\bf{u}}) = \sum_{k=3}^{n} \dim S_k(f).
\end{equation}
\end{proposition}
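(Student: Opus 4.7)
The plan is to realise the tower~\eqref{equ_iteratedbundlestr} as the tower of principal-submatrix projections underlying Thimm's construction. For $2\leq k\leq n$, let $\pi_k\colon \mcal{S}_n\to\mcal{S}_k$ be the truncation $A\mapsto A^{(k)}$ and set $\overline{S_k}(f):=\pi_k\bigl((\Phi_\lambda^{(n)})^{-1}({\bf{u}})\bigr)$. Because the level-$k$ components of $\Phi_\lambda^{(n)}$ in~\eqref{equ_gccomponentmoded} factor through $\pi_k$, the image $\overline{S_k}(f)$ lies in a single co-adjoint $\rmso(k)$-orbit whose weight is determined by the $k$-th row of ${\bf{u}}$, and by Corollary~\ref{proposition_multiplicity_free} it coincides with the Gelfand--Cetlin fiber of that orbit over the induced point. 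Setting $q_k:=\pi_{k-1}|_{\overline{S_k}(f)}$ then realises the tower, and the task reduces to identifying the typical fibre of $q_k$ with $S_k(f)$ and showing that $q_k$ is a smooth bundle.

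Fix $C\in\overline{S_{k-1}}(f)$ and write $A^{(k)}=\bigl(\begin{smallmatrix} C & v \\ -v^T & 0\end{smallmatrix}\bigr)$ for some $v\in\R^{k-1}$. Block-diagonalising $C$ via Lemma~\ref{lemma_spectralodd} and Lemma~\ref{lemma_spectraleven} converts the spectrum condition on $A^{(k)}$ into a polynomial identity in $v$ that decouples across the distinct eigenvalue-groups of $C$. On a group of multiplicity $\ell$ which drops to multiplicity $\ell-1$ at level $k$ (strict interlacing in~\eqref{equ_minmax_son}), the corresponding components of $v$ lie on a sphere $\mathbb{S}^{2\ell-1}$ whose radius is controlled by the eigenvalue gap; on a group where both level-$(k-1)$ and level-$k$ multiplicities coincide, those components of $v$ vanish. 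When $k$ is even the Pfaffian sign in~\eqref{equ_coadjointorbit2nu} imposes an additional constraint on $v$, which may or may not halve a sphere factor depending on whether the zero-eigenvalue block of $C$ is present; this is the sole source of the even-dimensional spheres $\mathbb{S}^{2\ell}$ and is the distinguishing feature of the $\rmso(n)$-case. Smooth dependence of the fibre on $C$ then follows from the transitive action of the centraliser of $C$ in $\rmso(k-1)$, which is guaranteed by Corollary~\ref{proposition_multiplicity_free}.

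We finally match this spectral analysis with the combinatorial cut decomposition of $W_k$ by $\gamma_f$. By Theorem~\ref{thm_diacorrespo}, the restriction of $\gamma_f$ to $W_k$ records exactly the multiplicity-transition data between levels $k-1$ and $k$: an $M_\ell$-block containing a bottom point of $W_k$ corresponds to a multiplicity-$\ell$ eigenvalue-group of $C$ dropping to multiplicity $\ell-1$, contributing $\mathbb{S}^{2\ell-1}$; an $N_\ell$-block below the base $\beta(\gamma_f)$ corresponds to the same transition at the zero-eigenvalue end with unconstrained Pfaffian sign, contributing $\mathbb{S}^{2\ell}$; all other cut regions correspond to preserved multiplicities and contribute a point. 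Taking the product over cut regions yields $q_k^{-1}(C)\cong S_k(f)$, hence the iterated bundle~\eqref{equ_iteratedbundlestr} and the dimension formula~\eqref{eq_dimformulaprop}. Isotropy of the fibre follows from Lemma~\ref{lemma_preserve} together with the multiplicity-free property, since each sphere factor is tangent to an orbit of a subgroup generated by Poisson-commuting Hamiltonians. The main obstacle will be the careful Pfaffian case analysis for even $k$ and the verification that the ``below the base'' condition on $N_\ell$-blocks matches precisely the regime in which no sign constraint is imposed, since this is the genuinely new combinatorial phenomenon absent from the $\rmu(n)$-analysis in~\cite{CKO}.
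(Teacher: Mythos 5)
Your proposal follows essentially the same path as the paper. Realizing the tower via the principal-submatrix truncations $\pi_k$, using multiplicity-freeness (Corollary~\ref{proposition_multiplicity_free} and its reformulation Lemma~\ref{lemma_transitive}) together with $\rmso(k-1)$-equivariance to get a locally trivial bundle at each stage, and then reading off the typical fibre from the characteristic polynomial of the bordered block matrix with $C$ block-diagonalized --- these are exactly the ingredients of the paper's Proposition~\ref{prop_projbundle} (transitivity plus the equivariant rank theorem plus Ehresmann) and Lemma~\ref{lemma_descriptionofsab}. Your identification of $\overline{S_k}(f)$ with the truncated image $\pi_k\bigl((\Phi_\lambda^{(n)})^{-1}({\bf u})\bigr)$ rather than with the pullback tower is a cosmetic difference: the two agree because the equivariant surjectivity of $\rho_{\fa,\fb}$ lets you extend any admissible $B\in\mcal{S}_k$ upward one stage at a time.

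Two caveats. First, the heart of the matter --- matching each cut region of $W_k$ against a constant string in $\fa\cup\fb$ and verifying that its contribution is exactly the sphere prescribed by~\eqref{equ_fibersfromblocks} --- is left as an outline; this requires the case analysis across the eight types in~\eqref{equ_pattern_odd} and~\eqref{equ_pattern_even}, which occupies Lemmas~\ref{lemma_case0}--\ref{lemma_case_even}. You flag this yourself, so it is an acknowledged rather than a hidden gap, but it is where the bulk of the proof resides. Second, a small imprecision: the Pfaffian constraint does not ``halve a sphere factor.'' When $k=2\ell+2$ and $\prod_j b_j\neq 0$, Lemma~\ref{lemma_Pfaffian} shows that $x_{\ell+1}$ is \emph{uniquely determined}, so it contributes nothing; the even-dimensional sphere $\mathbb{S}^{2\ell}$ appears only when the zero eigenvalue group of $C$ is nonempty, making $x_{\ell+1}$ a genuinely free coordinate that joins the vanishing-eigenvalue block to cut out an odd-dimensional affine sphere. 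Stating it this way would match the ``$N$-block below the base'' condition precisely.
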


In particular, Proposition~\ref{proposition_mainiterated} describes the iterated bundle structure of a given GC fiber.
Proposition~\ref{proposition_mainiterated} will be proven in Section~\ref{subsection_fibersatstages}.
We close this section by giving an example demonstrating the above combinatorial process.

\begin{example}
	Let us revisit Example~\ref{exa_mcalobface}. Take the vertex $v = (0,0,0)$ for example. 
	Then the corresponding isogram $\gamma_v$ is the first graph in Figure~\ref{Fig_Fiberreading}. 
	We now follow the three steps to compute the fiber of $\Phi_\lambda^{(5)}$ at $v$.
	In the first and second steps, $W_3$ and $W_4$-blocks are \emph{not} cut into smaller pieces so that $S_3(f) \simeq S_4(f) \simeq \{ \mbox{point} \}$. 
	At the final step, $W_5$ is cut into two blocks, one is an $M_1$-block and the other is an $M_2$-block. 
	But, the $M_1$-block does not contain any bottom point of $W_5$ so that it does \emph{not} contribute to $S_5(f)$. We then have $S_5(f) \simeq \mathbb{S}^3$ 
	by \eqref{equ_fibersatkthstage}
	and
	\[
		\left( \Phi_\lambda^{(5)} \right)^{-1}(v) \cong \mathbb{S}^3.
	\] Moreover, the fiber is Lagrangian. 
	
\begin{figure}[h]
	\scalebox{0.7}{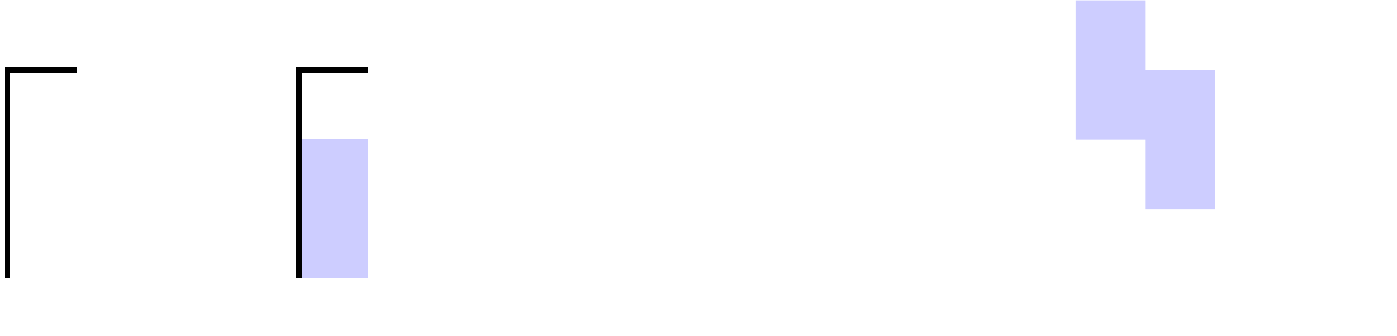}
	\vspace{-0.2cm}
	\caption{\label{Fig_Fiberreading} The GC fiber over $v = (0,0,0)$.}	
\end{figure}

Now, consider the edge determined by $u_{1,2} = u_{1,1} = 0$ of $\Delta_\lambda$ where the corresponding isogram is depicted in Figure \ref{Fig_Fiberreading2}.
Choose any point $w$ in the relative interior of the edge. 
The coordinate of $w$ is of the form $(u_{1,3}, u_{1,2}, u_{1,1}) = (a, 0, 0)$ for $a$ with $0 < a < 3$. 
Notice that the corresponding isogram $\gamma_w$ is given in Figure~\ref{Fig_Fiberreading2}.
Following the three steps above, we similarly obtain that
$S_3(f) \simeq  \{ \mbox{point} \}, \, S_4(f) \simeq \mathbb{S}^2,$ and  $\, S_5(f) \simeq \mathbb{S}^1.$
Thus, the GC fiber over $w$ is an $\mathbb{S}^1$-bundle over $\mathbb{S}^2$, which is also Lagrangian. 

\begin{figure}[h]
	\scalebox{0.7}{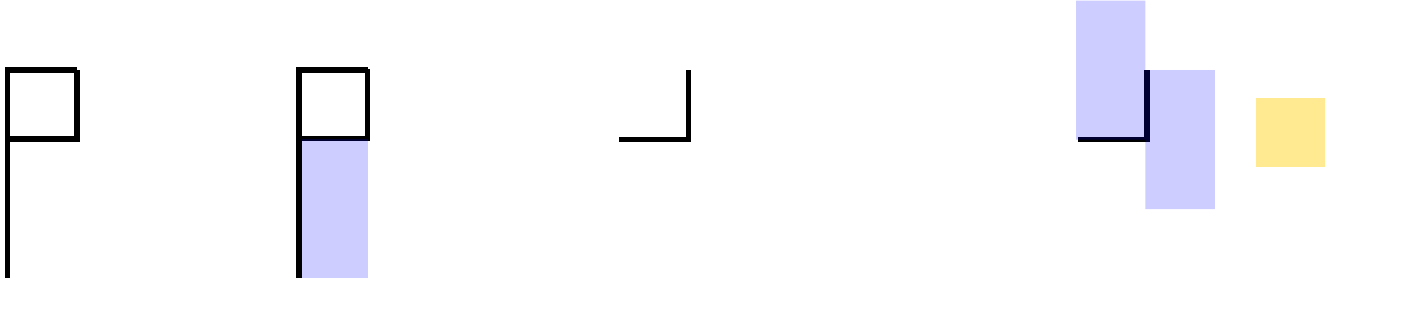}
	\vspace{-0.2cm}
	\caption{\label{Fig_Fiberreading2} The GC fiber over $w = (a,0,0)$ for some $a$ with $0 < a < 3$ .}	
\end{figure}
\end{example}

\section{Iterated bundle structure of Gelfand--Cetlin fibers}
\label{secIteratedBundleStructureOfGelfandCetlinFibers}

In this section, we will verify Proposition~\ref{proposition_mainiterated} and show that every GC fiber is isotropic.
Throughout this section, we denote by
\begin{itemize}
	\item $T(m)$ the maximal torus of $\rmso(m)$ defined in Section \ref{ssecRmsoNCases},
	\item $\frak{t}^*(m)$ the dual of the Lie algebra of $T(m)$,
	\item $\frak{t}^*_+(m)$ the positive Weyl chamber.
\end{itemize}	 

\subsection{Bundle structures at stages}\label{subsection_bundlestructureatstages}

Under the following identifications as in Section \ref{secGelfandCetlinSystems}
\begin{equation}\label{equ_t+*mt*m}
\begin{cases}
\frak{t}_{\vphantom{+}}^*(m) \cong \R^\ell, \,\, \frak{t}_+^*(m) \cong \left\{(\lambda_1, \cdots, \lambda_\ell) \in \R^\ell \mid \lambda_1 \geq \cdots \geq \lambda_\ell \geq 0 \right\} \quad &\mbox{if $m = 2\ell + 1$,} \\
\frak{t}_{\vphantom{+}}^*(m) \cong \R^{\ell+1}, \,\, \frak{t}_+^*(m) \cong \left\{(\lambda_1, \cdots, \lambda_{\ell+1}) \in \R^{\ell+1} \mid \lambda_1 \geq \cdots \geq \lambda_{\ell} \geq | \lambda_{\ell+1}| \right\} \quad &\mbox{if $m = 2\ell + 2$,} \\
\end{cases}
\end{equation}
let us choose two elements $\frak{a}$ and $\frak{b}$:
\begin{itemize}
\item \textbf{(Case 1)} when $m = 2 \ell + 1$, let
\[
	\begin{cases}
		\frak{a} := (a_1, \cdots, a_{\ell}) \in \frak{t}_+^*(m) \subset \frak{t}^*(m)  \cong \R^\ell, \\
		\frak{b} := (b_1, \cdots, b_{\ell}) \in \frak{t}_+^*(m-1)  \subset \frak{t}^*(m-1) \cong \R^\ell, 
	\end{cases}
\]
\item \textbf{(Case 2) } when $m = 2 \ell + 2$, let
\[
	\begin{cases}
		\frak{a} = (a_1, \cdots, a_{\ell+1})\in \frak{t}_+^*(m) \subset \frak{t}^*(m) \cong \R^{\ell+1},  \\
		\frak{b} = (b_1, \cdots, b_{\ell})\in \frak{t}_+^*(m-1) \subset \frak{t}^*(m-1) \cong \R^\ell.
	\end{cases}
\]
\end{itemize}
We denote
\begin{itemize}
\item by $\mcal{O}^{(m)}_\frak{a}$ the co-adjoint $\mathrm{SO}(m)$-orbit of the block diagonal matrix $I^{(m)}_\frak{a}$ in Section~\ref{ssec_soNCases} and 
\item by $\mcal{O}^{(m-1)}_\frak{b}$ the co-adjoint $\rmso(m-1)$-orbit of the block diagonal matrix $I^{(m-1)}_\frak{b}$ in Section~\ref{ssec_soNCases}. 
\end{itemize}
Let
\begin{equation}\label{equ_intermediatestagetotal}
\mcal{O}^{\vphantom{(m-1)}}_{\frak{a}, \frak{b}} := \left\{ A \in \mcal{O}_\fa^{(m)} \mid A^{(m-1)} \in \mcal{O}^{(m-1)}_\frak{b} \right\}
\end{equation}
and define the projection map
\begin{equation}\label{equ_projerhoab}
		\rho^{\vphantom{(m-1)}}_{\fa, \fb} \colon  \mcal{O}^{\vphantom{(m-1)}} _{\frak{a}, \frak{b}} \rightarrow   \mcal{O}_{\frak{b}}^{(m-1)} \quad \left(A^{\vphantom{(m-1)}} \mapsto A^{(m-1)}\right)
\end{equation}
where $A^{(m-1)}$ is the $(m-1)$-th order leading principal submatrix of $A$. 
Also, the inverse image of the block diagonal matrix $I^{(m-1)}_{\frak{b}}$ under the projection~\eqref{equ_projerhoab} is denoted by $\widetilde{\mcal{O}}_{\frak{a}, \frak{b}}$, namely, 
\begin{equation}\label{equ_wildtildemcaloab}
\widetilde{\mcal{O}}^{\vphantom{(m-1)}}_{\frak{a}, \frak{b}} := \rho_{\fa, \fb}^{-1}(I^{(m-1)}_{\frak{b}}) \subset \mcal{O}_{\fa, \fb} \subset \mcal{O}_{\fa}^{(m)}
\end{equation}

Note that $\mcal{O}_{\frak{a}, \frak{b}}$ is non-empty if and only if 
\begin{equation}\label{equ_givenfrakab}
\begin{cases}
a_1 \geq b_1 \geq \cdots \geq a_\ell \geq |b_\ell | &\quad \mbox{when $ m = 2 \ell + 1$}\\
a_1 \geq b_1 \geq \cdots \geq a_\ell \geq b_\ell \geq |a_{\ell+1}| &\quad \mbox{when $ m = 2 \ell + 2$.}
\end{cases}
\end{equation}
by the min-max principle, see Figure~\ref{Fig_Wblockswith}.
The main proposition in this subsection is then as follows.

\begin{proposition}\label{prop_projbundle}
The projection $\rho^{\vphantom{(m-1)}}_{\fa, \fb}$ in~\eqref{equ_projerhoab} is an $\widetilde{\mcal{O}}^{\vphantom{(m-1)}}_{\frak{a}, \frak{b}}$-bundle over $\mcal{O}_{\frak{b}}^{(m-1)}$.
\end{proposition}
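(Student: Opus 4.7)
My plan is to realize $\rho_{\fa,\fb}$ as an equivariant map for the $\rmso(m-1)$-action on $\mcal{O}_\fa^{(m)}$ induced by the block-diagonal embedding $\rmso(m-1) \hookrightarrow \rmso(m)$, $Q \mapsto \mathrm{diag}(Q,1)$, and then exhibit $\mcal{O}_{\fa,\fb}$ as an associated bundle over $\mcal{O}_\fb^{(m-1)} \cong \rmso(m-1)/H$, where $H$ is the isotropy subgroup at the base point $I_\fb^{(m-1)}$.

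First I would observe that conjugation by $\widetilde{Q} = \mathrm{diag}(Q,1) \in \rmso(m)$ sends $A \in \mcal{O}_\fa^{(m)}$ to $\widetilde{Q}^{\, T} A \widetilde{Q}$, whose $(m-1)$-th order leading principal submatrix is precisely $Q^T A^{(m-1)} Q$. Hence the submatrix map $A \mapsto A^{(m-1)}$ intertwines the induced $\rmso(m-1)$-action on $\mcal{O}_\fa^{(m)}$ with the co-adjoint $\rmso(m-1)$-action on $\frak{so}(m-1)^*$. In particular $\mcal{O}_{\fa,\fb}$ is $\rmso(m-1)$-invariant and $\rho_{\fa,\fb}$ is $\rmso(m-1)$-equivariant. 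Under the min-max conditions in~\eqref{equ_givenfrakab}, $\mcal{O}_{\fa,\fb}$ is nonempty (by Theorem~\ref{theorem_GCpolytopeineq}), and hence so is $\widetilde{\mcal{O}}_{\fa,\fb}$.

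Next I would invoke Corollary~\ref{proposition_multiplicity_free}, which asserts that $\mcal{O}_\fa^{(m)}$ is a multiplicity-free $\rmso(m-1)$-space; by definition, this means the induced $\rmso(m-1)$-action on the preimage of every co-adjoint $\rmso(m-1)$-orbit is transitive. Applied to $\mcal{O}_\fb^{(m-1)}$, this gives that $\mcal{O}_{\fa,\fb}$ is a single $\rmso(m-1)$-orbit. Thus $\mcal{O}_{\fa,\fb}$ is a smooth embedded submanifold of $\mcal{O}_\fa^{(m)}$, and $\rho_{\fa,\fb}$ is a smooth $\rmso(m-1)$-equivariant surjection between homogeneous spaces.

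Finally, let $H \subset \rmso(m-1)$ denote the stabilizer of $I_\fb^{(m-1)}$, so that $\mcal{O}_\fb^{(m-1)} = \rmso(m-1)/H$ as a principal $H$-bundle. Since $H$ preserves the fiber $\widetilde{\mcal{O}}_{\fa,\fb} = \rho_{\fa,\fb}^{-1}(I_\fb^{(m-1)})$, the map
\[
\rmso(m-1) \times \widetilde{\mcal{O}}_{\fa,\fb} \longrightarrow \mcal{O}_{\fa,\fb}, \qquad (Q, A) \mapsto \widetilde{Q}^{\, T} A\, \widetilde{Q},
\]
descends to a smooth $\rmso(m-1)$-equivariant map
\[
\rmso(m-1) \times_H \widetilde{\mcal{O}}_{\fa,\fb} \longrightarrow \mcal{O}_{\fa,\fb},
\]
which is bijective because every $A \in \mcal{O}_{\fa,\fb}$ can be conjugated by some $\widetilde{Q}$ so that $A^{(m-1)} = I_\fb^{(m-1)}$. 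Combined with transitivity of the $\rmso(m-1)$-action on $\mcal{O}_{\fa,\fb}$, this map is a diffeomorphism; composing with the associated-bundle projection $\rmso(m-1) \times_H \widetilde{\mcal{O}}_{\fa,\fb} \to \rmso(m-1)/H$ yields $\rho_{\fa,\fb}$. Local trivializations are then obtained from local sections of the principal $H$-bundle $\rmso(m-1) \to \mcal{O}_\fb^{(m-1)}$.

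The main conceptual hurdle is justifying that $\mcal{O}_{\fa,\fb}$ is a single $\rmso(m-1)$-orbit rather than only a finite union; I expect to handle this cleanly by appealing to Corollary~\ref{proposition_multiplicity_free} as stated (transitivity on full preimages), which is the key payoff of the multiplicity-freeness of the GC setup. The remaining work---smoothness of the orbit, smooth dependence of the associated-bundle atlas, and existence of local sections of $\rmso(m-1) \to \rmso(m-1)/H$---is standard for compact Lie group actions and does not require any computation specific to the $\rmso(n)$-type.
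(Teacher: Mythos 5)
Your proposal is correct and relies on the same two pillars as the paper's own proof: equivariance of $\rho_{\fa,\fb}$ under the block-diagonal $\rmso(m-1)$-action (the paper's Lemma~\ref{lemma_equivariant_map}) and transitivity of $\rmso(m-1)$ on $\mcal{O}_{\fa,\fb}$, which is exactly multiplicity-freeness applied to the inclusion $\mcal{O}_\fa^{(m)}\hookrightarrow\frak{so}(m)^*$ (the paper's Lemma~\ref{lemma_transitive}, which is proved from Theorem~\ref{theorem_multiplicityfree}; you cite the equivalent Corollary~\ref{proposition_multiplicity_free}). Where you diverge from the paper is the final step: the paper concludes by applying the equivariant rank theorem to show $\rho_{\fa,\fb}$ is a submersion and then invokes Ehresmann's fibration theorem to get local triviality, whereas you explicitly exhibit $\mcal{O}_{\fa,\fb}$ as the associated bundle $\rmso(m-1)\times_H\widetilde{\mcal{O}}_{\fa,\fb}$ over $\rmso(m-1)/H=\mcal{O}_\fb^{(m-1)}$. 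Your version is slightly more work but also delivers more: it identifies the structure group and exhibits $\rho_{\fa,\fb}$ as an associated-bundle projection, whereas Ehresmann only gives abstract local triviality. One point you state quickly but should flesh out a touch: to form the associated bundle you need $\widetilde{\mcal{O}}_{\fa,\fb}$ to be a smooth $H$-manifold; this follows because the stabilizer $K$ of any $A\in\widetilde{\mcal{O}}_{\fa,\fb}$ is contained in $H$ and $\widetilde{\mcal{O}}_{\fa,\fb}$ is a single $H$-orbit $\cong H/K$ (again by multiplicity-freeness), which is what the paper records separately as Corollary~\ref{cor_fiberhomegenous}. Likewise, "the map is a diffeomorphism" deserves a one-line justification (equivariance plus transitivity on both sides gives constant rank, and a constant-rank smooth bijection is a diffeomorphism), but this is standard. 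Overall the argument is sound and a clean alternative to the paper's.
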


To prove Proposition~\ref{prop_projbundle}, 
recall that the co-adjoint $\rmso(m)$-action on $\mcal{O}_{\fa}^{(m)} \subset \frak{so}(m)^*$ is Hamiltonian where a moment map is given by 
the inclusion $\mcal{O}_\fa^{(m)} \hookrightarrow \frak{so}(m)^*$. Then $\rmso(m-1)$, as the subgroup of $\rmso(m)$ of the form
		\begin{equation}\label{equation_embedding}
			 \rmso(m-1) \cong 
			\begin{pmatrix}\rmso(m-1) & 0\\[0.3em]
                  	0 & 1\\
		     \end{pmatrix} \subset \rmso(m), 
		\end{equation}
acts on $\mcal{O}_{\fa}^{(m)}$ in a Hamiltonian fashion where the projection
\begin{equation}\label{equation_rhoasom-1}		
\rho_{\fa} \colon \mcal{O}_{\fa}^{(m)} \to \frak{so}(m-1)^* \quad \quad \left(A \mapsto A^{(m-1)}\right)
\end{equation}
becomes a moment map. 
Since $\rho_{\fa}$ is $\rmso(m-1)$-equivariant, we see that 
$\mcal{O}^{\vphantom{(m-1)}} _{\frak{a}, \frak{b}} = \rho_{\fa}^{-1}\left(\mcal{O}_{\fb}^{(m-1)}\right)$ inherits the $\rmso(m-1)$-action. 

\begin{lemma}[cf. \cite{GS_Th}]\label{lemma_transitive} 
	Let $\fa$ and $\fb$ be given in \eqref{equ_givenfrakab}. Then $\rmso(m-1)$ acts transitively on $\mcal{O}_{\fa, \fb}$. 
\end{lemma}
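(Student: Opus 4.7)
My plan is to deduce Lemma~\ref{lemma_transitive} directly from the multiplicity-freeness already recorded as Corollary~\ref{proposition_multiplicity_free}. Recall from~\eqref{equation_embedding} and~\eqref{equation_rhoasom-1} that the subgroup $\mathrm{SO}(m-1)\hookrightarrow \mathrm{SO}(m)$ acts on the co-adjoint orbit $\mcal{O}_\fa^{(m)}$ in a Hamiltonian fashion with moment map the truncation $\rho_\fa \colon A \mapsto A^{(m-1)}$. By the definition~\eqref{equ_intermediatestagetotal}, the set $\mcal{O}_{\fa,\fb}$ is exactly $\rho_\fa^{-1}\!\left(\mcal{O}_\fb^{(m-1)}\right)$, so what must be shown is that this $\mathrm{SO}(m-1)$-invariant preimage of a single co-adjoint $\mathrm{SO}(m-1)$-orbit is itself a single $\mathrm{SO}(m-1)$-orbit.

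First I would verify non-emptiness. The interlacing condition~\eqref{equ_givenfrakab} is precisely the classical min-max inequality controlling the spectrum of the $(m-1)\times(m-1)$ leading principal submatrix of a skew-symmetric matrix with prescribed spectrum; equivalently, it is the restriction of~\eqref{equ_minmax_son} to the two rows $i+j\in\{m-1,m\}$ of the ladder diagram. Hence Theorem~\ref{theorem_GCpolytopeineq}, combined with the surjectivity onto the polytope noted in the subsequent remark, produces at least one $A\in \mcal{O}_\fa^{(m)}$ whose truncation $A^{(m-1)}$ lies in $\mcal{O}_\fb^{(m-1)}$, so $\mcal{O}_{\fa,\fb}\neq\emptyset$.

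Next I would invoke Corollary~\ref{proposition_multiplicity_free}: the Hamiltonian $\mathrm{SO}(m-1)$-manifold $\left(\mcal{O}_\fa^{(m)},\rho_\fa\right)$ is multiplicity free, which by the footnote definition means the $\mathrm{SO}(m-1)$-action on $\rho_\fa^{-1}(\mcal{O}_\xi)$ is transitive for every $\xi\in\frak{so}(m-1)^*$ in the image. Specializing to $\xi=I^{(m-1)}_\fb$ yields transitivity on $\mcal{O}_{\fa,\fb}$, which is the assertion of the lemma.

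The main concern is circularity, but the organization of the paper precludes it: Corollary~\ref{proposition_multiplicity_free} is proved in Section~\ref{secGelfandCetlinSystems} as a direct consequence of the complete integrability of the GC system (Proposition~\ref{proposition_cisgc}), whereas Lemma~\ref{lemma_transitive} is needed only afterwards in Section~\ref{secIteratedBundleStructureOfGelfandCetlinFibers}. If one prefers to avoid the multiplicity-free package, a hands-on alternative is to first reduce to $A_1^{(m-1)}=A_2^{(m-1)}=I^{(m-1)}_\fb$ via an $\mathrm{SO}(m-1)$-conjugation (extended block-diagonally), and then analyze the action of the stabilizer of $I^{(m-1)}_\fb$ in $\mathrm{SO}(m-1)$ on the last column vector $v$ of $A=\begin{pmatrix} I^{(m-1)}_\fb & v \\ -v^T & 0 \end{pmatrix}$, matching the characteristic polynomial of $A$ (and, in the even $m$ case, its Pfaffian) against $\fa$ to identify the orbit of $v$; this amounts to a slightly more involved but elementary linear-algebra computation that I expect to be the only real obstacle were the abstract route unavailable.
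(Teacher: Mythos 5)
Your proof is correct and follows essentially the same route as the paper: the paper's proof likewise derives transitivity of $\mathrm{SO}(m-1)$ on $\mcal{O}_{\fa,\fb} = \rho_\fa^{-1}(\mcal{O}_\fb^{(m-1)})$ by invoking multiplicity-freeness of the $\mathrm{SO}(m-1)$-action on $\mcal{O}_\fa^{(m)}$, which it obtains from Theorem~\ref{theorem_multiplicityfree} together with Proposition~\ref{proposition_cisgc}. Your appeal to Corollary~\ref{proposition_multiplicity_free} packages the identical ingredients, and your observations on non-emptiness and non-circularity are accurate but not strictly needed beyond what the paper already records.
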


\begin{proof}
	If the Hamiltonian $G$-manifold $(M,\omega,\mu)$ admits a completely integrable system consisting of collective Hamiltonians, then the $G$-action is {\em multiplicity free} by Theorem~\ref{theorem_multiplicityfree}. 
	That is, if $\mu \colon M \rightarrow \frak{g}^*$ is the moment map for the action, then the induced 
	$G$-action on $\mu^{-1}(G\cdot \xi)$ is transitive for any $\xi \in \frak{g}^*$, see \cite[(2.4)]{GS_Th}. 
	
	We apply the above statement to $(\mcal{O}_{\fa}^{(m)}, \omega_{\fa}, \rho_{\fa})$. 
	Since the GC system on 
	$(\mcal{O}_{\fa}^{(m)}, \omega_{\fa})$ consists of collective Hamiltonians with respect to the $\rmso(m-1)$-action by Proposition~\ref{proposition_cisgc}, the action should be multiplicity free.
	Thus the action on $\mcal{O}^{\vphantom{(m-1)}} _{\frak{a}, \frak{b}} = \rho_{\fa}^{-1}\left(\mcal{O}_{\fb}^{(m-1)}\right)$ is transitive for every $\fb$.
\end{proof}

\begin{lemma}\label{lemma_equivariant_map}
The projection $\rho^{\vphantom{(m-1)}}_{\fa, \fb}$ in~\eqref{equ_projerhoab} is an $\mathrm{SO}(m-1)$-equivariant map. 
\end{lemma}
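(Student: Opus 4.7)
The plan is to verify equivariance by a direct block matrix computation, using the explicit form of the embedding $\mathrm{SO}(m-1) \hookrightarrow \mathrm{SO}(m)$ from \eqref{equation_embedding}. First I would write any $A \in \mcal{O}_{\fa, \fb}$ in block form dictated by the decomposition $\R^m = \R^{m-1} \oplus \R$, namely
\[
A = \begin{pmatrix} A^{(m-1)} & v \\ -v^T & 0 \end{pmatrix}
\]
where $A^{(m-1)} \in \frak{so}(m-1)$ is skew-symmetric and $v \in \R^{m-1}$ (the $(m,m)$-entry is zero because $A$ is skew-symmetric).

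Next, for $Q \in \mathrm{SO}(m-1)$, let $\widetilde{Q} := \begin{pmatrix} Q & 0 \\ 0 & 1 \end{pmatrix} \in \mathrm{SO}(m)$ be its image under \eqref{equation_embedding}. The $\mathrm{SO}(m-1)$-action on $\mcal{O}_{\fa}^{(m)}$ (restricted from the co-adjoint $\mathrm{SO}(m)$-action, which under the identification $\frak{so}(m) \cong \frak{so}(m)^*$ of Section~\ref{ssec_soNCases} becomes conjugation) sends $A$ to $\widetilde{Q} A \widetilde{Q}^T$. A direct block multiplication gives
\[
\widetilde{Q} A \widetilde{Q}^T = \begin{pmatrix} Q A^{(m-1)} Q^T & Qv \\ -(Qv)^T & 0 \end{pmatrix},
\]
so the $(m-1)$-th order leading principal submatrix of $\widetilde{Q} A \widetilde{Q}^T$ is exactly $Q A^{(m-1)} Q^T$, which is the image of $A^{(m-1)}$ under the co-adjoint action of $Q$ on $\mcal{O}_{\fb}^{(m-1)}$. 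This yields
\[
\rho_{\fa, \fb}(\widetilde{Q} \cdot A \cdot \widetilde{Q}^T) = Q \cdot \rho_{\fa, \fb}(A) \cdot Q^T,
\]
which is the desired equivariance.

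There is essentially no obstacle here: the statement is a tautology once one unravels the definitions, because the embedded $\mathrm{SO}(m-1)$ acts by conjugation only on the upper-left $(m-1) \times (m-1)$ block and fixes the last coordinate. The one point worth noting is that $\rho_{\fa, \fb}$ coincides with the restriction of the moment map $\rho_{\fa}$ in \eqref{equation_rhoasom-1} of the induced $\mathrm{SO}(m-1)$-action, so equivariance also follows formally from the general fact that moment maps are equivariant; the explicit block computation above simply makes this transparent.
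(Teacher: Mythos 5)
Your proof is correct. The paper's own proof is exactly the one-line observation you make in your final paragraph: $\rho_{\fa,\fb}$ is the restriction of the moment map $\rho_{\fa}$ from \eqref{equation_rhoasom-1} for the induced $\mathrm{SO}(m-1)$-action, and moment maps are equivariant by definition; your explicit block-matrix computation simply unravels this abstraction, so the two arguments are essentially the same.
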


\begin{proof}
	Observe that $\rho_{\fa, \fb}$ is the restriction of $\rho_{\fa}$ in~\eqref{equation_rhoasom-1} onto $\mcal{O}^{\vphantom{(m-1)}} _{\frak{a}, \frak{b}}$.
	Since the moment map $\rho_{\fa}$ is $\rmso(m-1)$-equivariant, so is $\rho_{\fa, \fb}$.
\end{proof}

Now, we are ready to verify Proposition~\ref{prop_projbundle}. 

\begin{proof}[Proof of Proposition~\ref{prop_projbundle}]
By Lemma~\ref{lemma_transitive}, $\mathrm{SO}(m-1)$ acts transitively on $\mcal{O}_{\fa, \fb}$ so that $\mcal{O}_{\fa, \fb}$ is an orbit under the smooth $\mathrm{SO}(m-1)$-action in $\mcal{O}_{\frak{a}}^{(m)}$. 
Since $\mathrm{SO}(m-1)$ is compact, $\mcal{O}_{\fa, \fb}$ is an embedded smooth submanifold of $\mcal{O}_{\frak{a}}^{(m)}$. 
Hence, by Lemma~\ref{lemma_equivariant_map}, we have an $\rmso(m-1)$-equivariant map $\rho^{\vphantom{(m-1)}}_{\fa, \fb}$ between smooth manifolds. 
By the equivariant rank theorem (see \cite[Theorem 9.7]{Leesmooth} for instance), the transitivity of the $\mathrm{SO}(m-1)$-action and the surjectivity of $\rho^{\vphantom{(m-1)}}_{\fa, \fb}$ imply that $\rho^{\vphantom{(m-1)}}_{\fa, \fb}$ is submersive.
Finally, Ehresmann's fibration theorem yields that $\mcal{O}_{\fa, \fb}$ is a locally trivial fibration over $\mcal{O}^{(m-1)}_{\fb}$. 
Then Proposition~\ref{prop_projbundle} follows. 
\end{proof}

\begin{remark}
We remark that a similar argument was used by Lane \cite[Section 4.3]{Lane1}.
\end{remark}

\begin{corollary}\label{cor_fiberhomegenous}
Let $\scr{L}_{\frak{b}}$ be the stabilizer subgroup for the co-adjoint $\mathrm{SO}(m-1)$-action at $I^{(m-1)}_{\frak{b}}$. 
The space $\widetilde{\mcal{O}}^{\vphantom{(m-1)}}_{\frak{a}, \frak{b}}$ in~\eqref{equ_wildtildemcaloab} is a homogeneous $\scr{L}_{\frak{b}}$-space. 
\end{corollary}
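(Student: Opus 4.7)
The plan is to deduce this directly from the results already established in the section, principally Lemma~\ref{lemma_transitive} (transitivity of the $\mathrm{SO}(m-1)$-action on $\mcal{O}_{\fa, \fb}$) and Lemma~\ref{lemma_equivariant_map} ($\mathrm{SO}(m-1)$-equivariance of $\rho_{\fa, \fb}$). I would first note that since $\scr{L}_{\fb}$ is the stabilizer of $I^{(m-1)}_{\fb}$ under the co-adjoint $\mathrm{SO}(m-1)$-action and $\rho_{\fa, \fb}$ is equivariant, the subset $\widetilde{\mcal{O}}_{\fa, \fb} = \rho_{\fa, \fb}^{-1}(I^{(m-1)}_{\fb})$ is preserved by $\scr{L}_{\fb}$. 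This gives a well-defined $\scr{L}_{\fb}$-action, and it remains to verify transitivity.

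For transitivity, I would take two points $A_1, A_2 \in \widetilde{\mcal{O}}_{\fa, \fb}$. By Lemma~\ref{lemma_transitive}, there exists $g \in \mathrm{SO}(m-1)$ with $g \cdot A_1 = A_2$. Applying $\rho_{\fa, \fb}$ and using equivariance from Lemma~\ref{lemma_equivariant_map}, I get $g \cdot I^{(m-1)}_{\fb} = g \cdot \rho_{\fa, \fb}(A_1) = \rho_{\fa, \fb}(A_2) = I^{(m-1)}_{\fb}$, so $g \in \scr{L}_{\fb}$. This shows $\scr{L}_{\fb}$ acts transitively on $\widetilde{\mcal{O}}_{\fa, \fb}$, hence it is a homogeneous $\scr{L}_{\fb}$-space.

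There is essentially no obstacle here; the corollary is a formal consequence of the preceding bundle picture. If one wishes, one can add a brief aside identifying $\widetilde{\mcal{O}}_{\fa, \fb} \cong \scr{L}_{\fb}/\mathrm{Stab}_{\scr{L}_{\fb}}(A_0)$ for a chosen basepoint $A_0$ and noting that this stabilizer is simply $\mathrm{Stab}_{\mathrm{SO}(m-1)}(A_0) \cap \scr{L}_{\fb}$, but this is not needed for the bare statement. The only mild subtlety to watch is that one must invoke smoothness of $\widetilde{\mcal{O}}_{\fa, \fb}$ (as a fiber of the submersion $\rho_{\fa, \fb}$ established in Proposition~\ref{prop_projbundle}) to talk about it as a homogeneous space in the manifold sense, but this is immediate from the bundle structure already proven.
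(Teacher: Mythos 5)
Your proof is correct and follows exactly the route the paper takes: it cites Lemma~\ref{lemma_transitive} and Lemma~\ref{lemma_equivariant_map}, and your argument simply spells out the (short) deduction that the paper leaves implicit. Nothing to add.
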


\begin{proof}
It follows from Lemma~\ref{lemma_transitive} and~\ref{lemma_equivariant_map}.
\end{proof}

\subsection{Fibers at stages}\label{subsection_fibersatstages}

In this subsection, we describe the fiber of the bundle $\rho^{\vphantom{(m-1)}}_{\fa, \fb}$ in Proposition~\ref{prop_projbundle} and build up a connection between the fibers and the block combinatorics illustrated in Section~\ref{secTheTopologyOfGelfandCetlinFibers}. 
For a pair $(\frak{a}, \frak{b})$ in~\eqref{equ_givenfrakab}, consider the $W_m$-block in~\eqref{eq_wkunion}.
Setting 
\begin{equation}\label{equ_fafab||}
	|{\fa}| = (|{a}_1|, |{a}_2|, \cdots ) \quad \text{and} \quad |{\fb}| = (|{b}_1|, |{b}_2|, \cdots ),
\end{equation}
let $W_{\frak{a}, \frak{b}}$ be the $W_m$-shaped block with fillings by $|{\frak{a}}|$ and $|{\frak{b}}|$ as depicted in Figure~\ref{Fig_Wblockswith}. 
Note that, because of the pattern~\eqref{equ_givenfrakab}, all components of $|{\fa}|$ and $|{\fb}|$ are same as ${\fa}$ and ${\fb}$ except possibly for $a_{\ell + 1}$ when $m = 2 \ell + 2$ and for $b_{\ell}$ when $m = 2\ell + 1$.

\begin{figure}[h]
	\scalebox{0.6}{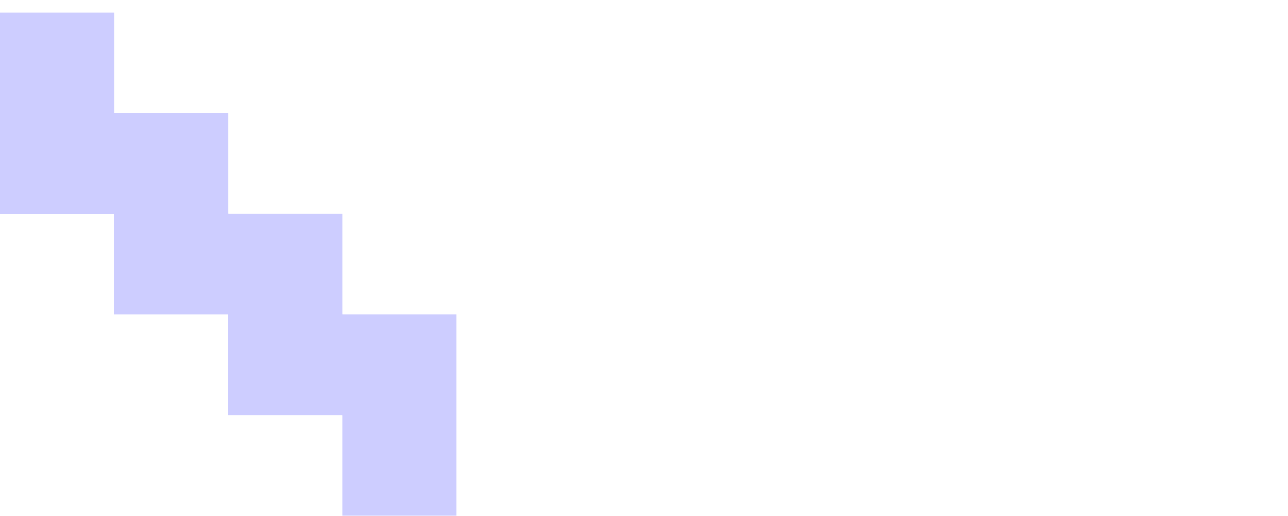}
	\caption{\label{Fig_Wblockswith} $W_{\frak{a}, \frak{b}}$-blocks.}	
\end{figure}

Recall the inequalities for the components of $(|\frak{a}|, |\frak{b}|)$ in~\eqref{equ_givenfrakab}.
We divide the block $W_m$ into several simple closed regions by cutting every shared edge of two unit boxes in $W_{\frak{a}, \frak{b}}$ labeled by either $a_j$ and $b_j$ with $|{a}_j| > |{b}_j|$ or ${b}_j$ and ${a}_{j+1}$ with $|{b}_j| > |{a}_{j+1}|$, see Figure~\ref{Fig_Wblockscut}. 
Regarding $W_{\frak{a}, \frak{b}}$ as the $W_m$-block, we then assign the topological space $S(\mcal{D})$ to each cut simple closed region $\mcal{D}$ of ${W}_m$ following the rule~\eqref{equ_fibersfromblocks}. 
Since $W_{\frak{a}, \frak{b}}$ is the $W_m$-block with fillings, a bottom point of $W_{\frak{a}, \frak{b}}$ makes sense. 
Perhaps, a clarification on contributions of $N$-blocks is necessary$\colon$
For an $N_\ell$-block $\mcal{D}$, we set 
$$
S(\mcal{D}) =
\begin{cases}
\mathbb{S}^{2\ell}  &\mbox{if $\mcal{D}$ contains zero components of $|{\frak{a}}|$ and $|{\frak{b}}|$.} \\
\textup{point} &\mbox{otherwise.}
\end{cases}
$$
As an example, consider $W_4$ in Figure~\ref{Fig_Fiberreading2} where $\frak{a} = (a,0)$ and $\frak{b} = (0)$. Since $a > 0$, $W_4$-block is divided into two blocks $\colon$ one unit block $\mcal{D}_1$ and one $N_1$-block $\mcal{D}_2$. $S(\mcal{D}_1)$ is a point and $S(\mcal{D}_2)$ is $\mathbb{S}^2$ since $\mcal{D}_2$ contains $(0,0)$.

We then put
\begin{equation}\label{equ_fibersatkthstageab}
S_{\frak{a}, \frak{b}} := \prod_\mcal{D} S (\mcal{D})
\end{equation}
where the product is taken over all divided simple closed regions of ${W}_{\frak{a}, \frak{b}}$. 

\begin{figure}[h]
	\scalebox{0.75}{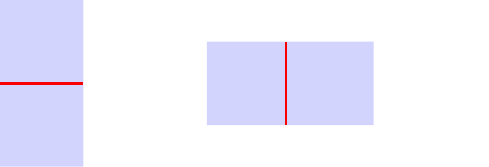}
	\vspace{-0.1cm}
	\caption{\label{Fig_Wblockscut} Cutting $W_{\frak{a}, \frak{b}}$-blocks.}	
\end{figure}

The following lemma characterizes the fibers at stages. The proof of Lemma~\ref{lemma_descriptionofsab} will be given in Section~\ref{secproofoflemma}.

\begin{lemma}\label{lemma_descriptionofsab}
The smooth manifold $\widetilde{\mcal{O}}_{\frak{a}, \frak{b}}$ in~\eqref{equ_wildtildemcaloab} is diffeomorphic to $\mcal{S}_{\frak{a}, \frak{b}}$ in \eqref{equ_fibersatkthstageab}.
\end{lemma}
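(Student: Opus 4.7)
The plan is to combine Corollary~\ref{cor_fiberhomegenous} with an explicit parametrization of $\widetilde{\mcal{O}}_{\fa,\fb}$ by the last column of the skew-symmetric matrix, and then match the resulting stabilizer orbit to the cut blocks of $W_{\fa,\fb}$. First, since the leading $(m-1)\times(m-1)$ principal block of every $A \in \widetilde{\mcal{O}}_{\fa,\fb}$ equals $I^{(m-1)}_\fb$ and the $(m,m)$-entry vanishes, $A$ is completely recorded by its last column $v \in \R^{m-1}$. The conjugation $\scr{L}_\fb$-action on $A$ restricts to the standard linear $\scr{L}_\fb$-action on $v$, so Corollary~\ref{cor_fiberhomegenous} identifies $\widetilde{\mcal{O}}_{\fa,\fb}$ with the $\scr{L}_\fb$-orbit of any admissible $v$.

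Next I will describe $\scr{L}_\fb$ and the decomposition of $v$ block by block. Grouping the entries of $\fb$ by common absolute value and separating the zeros from the nonzeros, a maximal run of $k$ entries of common positive value contributes a factor $\rmu(k)$ of $\scr{L}_\fb$ acting as the standard unitary representation on the corresponding $\R^{2k}\cong\C^k$-subspace, while all zero coordinates (including the unpaired one that arises when $m-1$ is odd) are permuted by a single factor $\rmso(d)$, where $d$ is the dimension of the zero eigenspace of $I^{(m-1)}_\fb$. Writing $v=(v^{(1)},\dots,v^{(s)})$ for the corresponding decomposition, the orbit of $v$ is the product of the orbits of the $v^{(j)}$'s, and each factor is either a point (when $v^{(j)}=0$) or a sphere of ambient codimension one: $\mathbb{S}^{2k-1}$ in the $\rmu(k)$ case and $\mathbb{S}^{d-1}$ in the $\rmso(d)$ case.

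The radii of these spheres are pinned down by the spectral constraint $\mathrm{spec}(A)=\mathrm{spec}(I^{(m)}_\fa)$. Expanding $\det(\lambda\,\mathrm{Id}_m-A)$ along the last row and column and using that the adjugate of $\lambda\,\mathrm{Id}_{m-1}-I^{(m-1)}_\fb$ is block-diagonal, with $2\times2$ blocks of determinant $\lambda^2+b_j^2$ and a $1\times1$ block when $m-1$ is odd, a direct calculation produces a polynomial identity in $\lambda$ whose coefficients linearly determine the group sums $\sum_{j\in G}\|v^{(j)}\|^2$ as Cauchy-type rational expressions in the $a_i^2$ and $b_j^2$. The interlacing~\eqref{equ_givenfrakab} guarantees nonnegativity of each such sum, and the sum vanishes exactly when the associated group of $b$'s meets some $a_i$ with the same absolute value. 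For even $m$ the sign of the scalar $v^{(0)}$ (should it exist) is additionally pinned down by the Pfaffian constraint $\mathrm{pf}(A)=a_1\cdots a_{\ell+1}$, so no extra disconnected $\mathbb{S}^0$-factor arises.

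Finally I translate this algebraic picture into the block combinatorics of $W_{\fa,\fb}$: each maximal run in the zigzag reading $(a_1,b_1,a_2,b_2,\dots)$ corresponds to one uncut region $\mcal{D}$ of $W_m$, and the shape of $\mcal{D}$ is precisely an $M_k$-block when the run consists of nonzero values of multiplicity $k$, and an $N_\ell$-block when the run includes zero entries. The orbit factors $\mathbb{S}^{2k-1}$ and $\mathbb{S}^{d-1}=\mathbb{S}^{2\ell}$ then match~\eqref{equ_fibersfromblocks}, and the geometric conditions ``$\mcal{D}$ contains a bottom point of $W_m$'' and ``$\mcal{D}$ lies below the base $\beta(\gamma_f)$'' translate to the strict inequalities that force the corresponding group sum to be strictly positive. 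The main obstacle will be exactly this bookkeeping: verifying case by case, as the parity of $m$ varies and as zero entries appear or disappear in $\fa$ and $\fb$, that the shape of each region, the parity of the sphere dimension, and the position of the group within the spectrum of $I^{(m)}_\fa$ all line up. Once these cases are dispatched, one obtains $\widetilde{\mcal{O}}_{\fa,\fb}\cong\prod_\mcal{D}S(\mcal{D})=\mcal{S}_{\fa,\fb}$, as claimed.
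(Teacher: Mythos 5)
Your proof follows essentially the same route as the paper's argument in Section~\ref{secproofoflemma}: project onto the last column, use Corollary~\ref{cor_fiberhomegenous} together with Lemma~\ref{lemmaOabls} to realize $\widetilde{\mcal{O}}_{\fa,\fb}$ as an $\scr{L}_\fb$-orbit in $\R^{m-1}$, pin down the group sums of that vector's components (hence the sphere radii and which vanish) from the characteristic polynomial and the Pfaffian via Lemmas~\ref{lemma_char} and~\ref{lemma_Pfaffian}, and translate the result into the cut blocks of $W_{\fa,\fb}$; the bookkeeping you defer is precisely the content of the case analysis in Lemmas~\ref{lemma_case0}--\ref{lemma_case_even}, with your criterion for a vanishing group sum correctly amounting to ``the constant string of $b$'s is abutted on one side by an $a_i$ of equal absolute value''. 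One small remark: your identification of the factor of $\scr{L}_\fb$ attached to a maximal run of $k$ equal nonzero $b$-values as $\mathrm{U}(k)$ is in fact the accurate description, whereas Lemma~\ref{lemma_levisubgroup} writes $\mathrm{SO}(2k)$; since both groups act transitively on the unit sphere $\mathbb{S}^{2k-1}$ with codimension-one isotropy, the discrepancy is immaterial for the quotients appearing in~\eqref{equation_prod_sphere} and hence for the conclusion.
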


Assuming Lemma~\ref{lemma_descriptionofsab}, we are ready to prove Proposition~\ref{proposition_mainiterated}.

\begin{proof}[Proof of Proposition~\ref{proposition_mainiterated}]
The point $\bf{u}$ determines the sequence $(\frak{a}^{(2)}, \cdots, \frak{a}^{(n)})$ of strings where $\nu(m) := \lfloor  \frac{m}{2} \rfloor$
\begin{equation}\label{equ_positionstrings}
\frak{a}^{(m)} := (u_{1,m-1}, \cdots, u_{\nu(m), m - \nu(m)}).
\end{equation}
The iterated bundle whose total space is the fiber $\left( \Phi_\lambda^{(n)} \right)^{-1}({\bf u})$ in \eqref{equ_iteratedbundlestr} is then given by the following pull-backed iterated bundle from Proposition \ref{prop_projbundle}$\colon$
\begin{equation}\label{figure_iterated_bundle}
	\xymatrix{
		 E_n = \overline{S_{n}}(f)  \ar[r] \ar[d]_{{q_{n}}}& \quad \cdots \quad \ar[r] \ar[d] & \iota^* \left( \mcal{O}_{\frak{a}^{(n)}, \frak{a}^{(n-1)}}\right)   \ar[r] \ar[d]^{\iota^* \rho_{\frak{a}^{(n)\vphantom{-1}}, \frak{a}^{(n-1)}}}  & \mcal{O}_{\frak{a}^{(n)}, \frak{a}^{(n-1)}} \ar@{^{(}->}[r]^{\iota} \ar[d]^{\rho_{\frak{a}^{(n)\vphantom{-1}}, \frak{a}^{(n-1)}}} & \mcal{O}^{(n)}_{\frak{a}^{(n)}}\\
		 E_{n-1} = \overline{S_{n-1}}(f) \ar[r] \ar[d]_{{q_{n-1}}} & \quad \cdots \quad \ar[r] \ar[d] & \mcal{O}_{\frak{a}^{(n-1)}, \frak{a}^{(n-2)}} \ar@{^{(}->}[r]^{\iota} \ar[d]^{\rho_{\frak{a}^{(n-1)}, \frak{a}^{(n-2)}}}   &  \mcal{O}_{\frak{a}^{(n-1)}}^{(n-1)}  & \\
		 \vdots  \ar[r] \ar[d] & \quad {\cdots} \quad   \ar@{^{(}->}[r] \ar[d] & \mcal{O}^{(n-2)}_{\frak{a}^{(n-2)}} &  &\\
		E_3 = \overline{S_{3}}(f) \ar@{^{(}->}[r]^{\iota} \ar[d]_{{q_3}} & \quad \cdots \quad &  & & \\
		E_2 = \overline{S_{2}}(f) = \{ \textup{pt} \} &   &  & &}
\end{equation}
where $\iota$'s are the canonical inclusions. 
By Lemma~\ref{lemma_descriptionofsab}, the fiber of $q_m$ is (diffeomorphic to) $\mcal{S}_{\frak{a}, \frak{b}}$.
Moreover, the dimension formula~\eqref{eq_dimformulaprop} is straightforward.
\end{proof}

\subsection{Isotropic property of Gelfand--Cetlin fibers}\label{subsectionisotropicpro}
One of the statements in Theorem~\ref{thm_fiber} claims that each GC fiber is isotropic in $(\mcal{O}_\lambda^{(n)}, \omega^{(n)}_\lambda)$. 
It will be verified in the subsection. 

Let $\omega_\fa^{(n)}$ be the Kirillov--Kostant--Souriau form on $\mcal{O}_\fa^{(n)}$.

\begin{lemma}\label{lemma_inherit}
	Let $p \in \mcal{O}_{\fa, \fb}$. For any vectors $\eta_1, \eta_2$ in $T_p \mcal{O}_{\fa, \fb}$, we have 
	\[
		\left(\omega_\fa^{(n)}\right)_p (\eta_1, \eta_2) = \left(\omega_\fb^{(n-1)}\right)_{\rho_{\fa, \fb}(p)} ( (\rho_{\fa, \fb})_* \eta_1, (\rho_{\fa, \fb})_* \eta_2 ).
	\]
\end{lemma}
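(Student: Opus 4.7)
The plan is to reduce the equality to a direct application of Lemma~\ref{lemma_preserve} by exploiting the transitivity of the $\mathrm{SO}(n-1)$-action on $\mcal{O}_{\fa, \fb}$. First I would observe that the co-adjoint $\mathrm{SO}(n-1)$-action on $(\mcal{O}_\fa^{(n)}, \omega_\fa^{(n)})$ is Hamiltonian with moment map $\rho_\fa$ from~\eqref{equation_rhoasom-1}, and that $\rho_{\fa, \fb}$ is simply the restriction of $\rho_\fa$ to the submanifold $\mcal{O}_{\fa, \fb}$.

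Next, by Lemma~\ref{lemma_transitive}, the group $\mathrm{SO}(n-1)$ acts transitively on $\mcal{O}_{\fa, \fb}$, so every tangent vector in $T_p \mcal{O}_{\fa, \fb}$ is a value of some fundamental vector field. In particular, for the given $\eta_1, \eta_2 \in T_p \mcal{O}_{\fa, \fb}$ I would choose $X_1, X_2 \in \mathfrak{so}(n-1)$ such that $\eta_i = \underline{X_i}_p$ for $i = 1, 2$. Applying Lemma~\ref{lemma_preserve} to the Hamiltonian $\mathrm{SO}(n-1)$-manifold $(\mcal{O}_\fa^{(n)}, \omega_\fa^{(n)}, \rho_\fa)$ then yields
\[
\left(\omega_\fa^{(n)}\right)_p(\underline{X_1}_p, \underline{X_2}_p) \;=\; \omega^{\mathrm{KKS}}_{\rho_\fa(p)}\!\bigl(d(\rho_\fa)_p(\underline{X_1}_p),\, d(\rho_\fa)_p(\underline{X_2}_p)\bigr),
\]
where $\omega^{\mathrm{KKS}}$ is the KKS form on $\mathfrak{so}(n-1)^*$ restricted to the co-adjoint $\mathrm{SO}(n-1)$-orbit through $\rho_\fa(p)$.

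Finally I would identify the right-hand side with the desired expression. Since $p \in \mcal{O}_{\fa, \fb}$, we have $\rho_\fa(p) = \rho_{\fa, \fb}(p) \in \mcal{O}_\fb^{(n-1)}$, so the pertinent co-adjoint orbit is $\mcal{O}_\fb^{(n-1)}$ and the KKS form on it is by definition $\omega_\fb^{(n-1)}$. Moreover, since $\rho_{\fa, \fb}$ is the restriction of $\rho_\fa$, the differentials agree on $T_p \mcal{O}_{\fa, \fb}$, i.e.\ $d(\rho_\fa)_p(\eta_i) = (\rho_{\fa, \fb})_* \eta_i$. Substituting these identifications gives exactly the stated equality.

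No serious obstacle is anticipated: the argument is a purely formal consequence of Lemma~\ref{lemma_preserve} together with transitivity (Lemma~\ref{lemma_transitive}). The only mild point requiring care is the choice of $X_i$'s: they need not be unique, but the identity in Lemma~\ref{lemma_preserve} depends only on the fundamental values $\underline{X_i}_p$, so any choice works and the equality descends unambiguously to $(\eta_1, \eta_2)$.
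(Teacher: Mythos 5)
Your proof is correct and takes essentially the same approach as the paper: express $\eta_i$ as fundamental vector fields of the $\mathrm{SO}(n-1)$-action using the transitivity from Lemma~\ref{lemma_transitive}, then apply Lemma~\ref{lemma_preserve} to the Hamiltonian $\mathrm{SO}(n-1)$-manifold $(\mcal{O}_\fa^{(n)},\omega_\fa^{(n)},\rho_\fa)$. You are slightly more careful than the paper in distinguishing the moment map $\rho_\fa$ from its restriction $\rho_{\fa,\fb}$, which is a point worth making explicit.
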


\begin{proof}
	By Lemma \ref{lemma_transitive}, each $\eta_i$ can be expressed by the fundamental vector field generated by the $\rmso(n-1)$-action so that we have 
	$\eta_1 = \underline{X}_p$, $\eta_2 = \underline{Y}_p$
	for some $X, Y \in \frak{so}(n-1)$. Since the moment map for the $\rmso(n-1)$-action on $\mcal{O}_{\fa}^{(n)}$ is nothing but $\rho_{\fa, \fb}$ (see \eqref{equation_rhoasom-1}), 
	we apply Lemma \ref{lemma_preserve} to $M = \mcal{O}_{\fa}^{(n)}$ with $\mu = \rho_{\fa, \fb}$ and $\omega_{\mu(p)}^{\mathrm{KKS}} = \omega_{\fb}$ and we then obtain a proof.
\end{proof}

\begin{proposition}\label{prop_isotropic}
	Any GC fiber $\left( \Phi_\lambda^{(n)} \right)^{-1}({\bf u})$ is an isotropic submanifold of $(\mcal{O}_\lambda^{(n)}, \omega_\lambda^{(n)})$.
\end{proposition}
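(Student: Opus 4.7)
The plan is to prove Proposition~\ref{prop_isotropic} by induction on $n$, using Lemma~\ref{lemma_inherit} to transfer the isotropy condition on $\mcal{O}_\lambda^{(n)}$ to one on the smaller co-adjoint orbit $\mcal{O}_{\fa^{(n-1)}}^{(n-1)}$. The base case $n\le 3$ is trivial: for $n=2$ the orbit $\mcal{O}_\lambda^{(2)}$ is a point, and for $n=3$ the orbit is two-dimensional while any GC fiber is at most one-dimensional, so it is automatically isotropic.

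For the inductive step, write ${\bf u}=(u_{i,j})\in\Delta_\lambda^{(n)}$ and set $\fa:=\lambda$, $\fb:=\fa^{(n-1)}=(u_{1,n-2},\dots,u_{\nu(n-1),n-1-\nu(n-1)})$ as in~\eqref{equ_positionstrings}. Let ${\bf u}'\in\Delta_\fb^{(n-1)}$ be obtained from ${\bf u}$ by keeping only the coordinates $u_{i,j}$ with $i+j\le n-1$. Every $A\in(\Phi_\lambda^{(n)})^{-1}({\bf u})$ lies in $\mcal{O}_{\fa,\fb}$, since its $(n-1)$-st leading principal submatrix has spectrum prescribed by $\fb$. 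Because taking principal submatrices commutes, $(A^{(n-1)})^{(m)}=A^{(m)}$ for every $m<n-1$, so all remaining components of $\Phi_\lambda^{(n)}$ factor through $\rho_{\fa,\fb}$ and match the components of $\Phi_\fb^{(n-1)}\circ\rho_{\fa,\fb}$. In particular,
\[
\rho_{\fa,\fb}\bigl((\Phi_\lambda^{(n)})^{-1}({\bf u})\bigr)\subset (\Phi_\fb^{(n-1)})^{-1}({\bf u}').
\]

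Now pick any $p\in(\Phi_\lambda^{(n)})^{-1}({\bf u})$ and any two tangent vectors $\eta_1,\eta_2\in T_p(\Phi_\lambda^{(n)})^{-1}({\bf u})$. They lie inside $T_p\mcal{O}_{\fa,\fb}$, so Lemma~\ref{lemma_inherit} yields
\[
(\omega_\lambda^{(n)})_p(\eta_1,\eta_2)=(\omega_\fb^{(n-1)})_{\rho_{\fa,\fb}(p)}\bigl((\rho_{\fa,\fb})_*\eta_1,(\rho_{\fa,\fb})_*\eta_2\bigr).
\]
By the containment above, the pushforwards $(\rho_{\fa,\fb})_*\eta_i$ are tangent to $(\Phi_\fb^{(n-1)})^{-1}({\bf u}')$, which is isotropic by the inductive hypothesis, so the right-hand side vanishes. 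The only subtlety is that the argument needs the fibers to be smooth in order to speak of $T_p$; this is supplied by Proposition~\ref{proposition_mainiterated}. Apart from this, the whole proof reduces to combining Lemma~\ref{lemma_inherit} with the compatibility of principal-submatrix restriction and the GC system, so there is no serious obstacle beyond setting up the induction cleanly.
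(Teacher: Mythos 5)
Your proof is correct and follows essentially the same route as the paper: the key step is the identity in Lemma~\ref{lemma_inherit}, applied to transfer the evaluation of $\omega_\lambda^{(n)}$ on the GC fiber to an evaluation of $\omega_\fb^{(n-1)}$ on a smaller GC fiber. The paper unrolls this into a single chain of equalities down to $\mcal{O}^{(2)}_{{\bf u}_2}=\{\mathrm{point}\}$, whereas you repackage the same descent as an induction on $n$; this is a cosmetic difference only.
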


\begin{proof}	
	For any point ${\bf u} = ({u}_{i,j})\in \Delta_\lambda^{(n)}$, we have a family of non-increasing sequences such that 
	\begin{equation}\label{equ_u_sequence}
		\{ {\bf u}_{m+1} \}_{m = 1}^{n-1}, \quad {\bf u}_{m+1} := ({u}_{1, m}, {u}_{2, m-1}. \cdots) \in \R^{\lfloor \frac{m}{2} \rfloor}.
	\end{equation}
	In particular ${\bf u}_2 = ({u}_{1,1})$ and ${\bf u}_n = \lambda$ where $\lambda$ is in~\eqref{equ_sequoflambbda}. Note that $\mcal{O}^{(2)}_{\bf{u}_2} = \{ \mathrm{point} \}$.
	
	For any point $p$ in the fiber of $\Phi_\lambda^{(n)}$ over ${\bf u} \in \Delta_\lambda^{(n)}$, take any tangent vectors $\eta_1, \eta_2$ to the fiber. 
	Setting $p_0 := p$ and $\eta_{i}^{(0)} := \eta_{i}^{\vphantom{(0)}}$ for $i = 1,2$ and applying Lemma~\ref{lemma_inherit} successively, we obtain
\begin{align*}
(\omega_\lambda^{(n)})_p (\eta_1, \eta_2) &= (\omega_{{\bf u}_{n}, {\bf u}_{n-1}}^{(n)})_p (\eta^{(0)}_1, \eta^{(0)}_2) = (\omega_{{\bf u}_{n-1}, {\bf u}_{n-2}}^{(n-1)})_{p_1} (\eta_1^{(1)}, \eta_2^{(1)}) = \cdots = (\omega_{{\bf u}_3, {\bf u}_2}^{(3)})_{p_{n-1}} (\eta_1^{(n-2)}, \eta_2^{(n-2)})
\end{align*}
where
$p_k = \rho_{{\bf u}_{n-k},  {\bf u}_{n-k-1}} (p_{k-1})$ and  $\eta_i^{(k)} := (\rho_{{\bf u}_{n-i},  {\bf u}_{n-i-1}})_*(\eta_i^{(k-1)}).$
Then
$$
(\omega_{{\bf u}_3, {\bf u}_2}^{(3)})_{p_{n-1}} (\eta_1^{(n-2)}, \eta_2^{(n-2)})=0
$$
because $\rho_{{\bf u}_3, {\bf u}_2} \colon \mcal{O}_{{\bf u}_3, {\bf u}_2} \rightarrow \mcal{O}_{{\bf u}_2}^{(2)} = \{\mathrm{point}\}.$
	Therefore, $\omega_\lambda^{(n)}$ vanishes on ${(\Phi_\lambda^{(n)})^{-1}({\bf u})}$.	
\end{proof}


We close this section by generalizing Theorem~\ref{thm_fiber} on the classical Gelfand--Cetlin systems to the Gelfand--Cetlin systems on multiplicity free Hamiltonian $\rmso(n)$-manifolds (in the setting of \cite[Theorem 26]{Lane2}). 
		
	\begin{proposition}\label{theorem_general}
		Let $(M,\omega)$ be a Hamiltonian $\rmso(n)$-action with a proper moment map $\mu \colon M \rightarrow \frak{so}(n)^*$. Suppose that $(M,\omega)$ admits a Gelfand--Cetlin system $\Psi = \Phi \circ \mu \colon M \rightarrow \R^N$ in the sense of Definition~\ref{definition_GC_Multiplicity_free}.
		Then every fiber of $\Psi$ is an isotropic submanifold of $(M,\omega)$. Moreover, each fiber $\Psi^{-1}({\bf u})$ is the  
		total space of an iterated bundle
		\begin{equation}\label{equ_iterated_general}
			\Psi^{-1}({\bf{{u}}}) = E_{n+1} \stackrel{q_{n+1}} \longrightarrow \Phi^{-1}({\bf{{u}}})  = E_{n} \stackrel{q_{n}} \longrightarrow \cdots  
			\stackrel{q_4} \longrightarrow E_3  \stackrel{q_3} \longrightarrow E_2 = \mathrm{point}
		\end{equation}
		such that the fiber of each $q_i$ is 
		\begin{itemize}
			\item a point or a product of spheres if $ 3 \leq i \leq n$,
			\item the fiber $\mu^{-1}(x)$ of $x$ under the moment map $\mu$  where $x \in \Phi^{-1}({\bf u})$ if $i = n+1$.
		\end{itemize}
	\end{proposition}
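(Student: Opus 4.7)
The plan is to reduce the general setting to the classical case of Theorem~\ref{thm_fiber} by slicing $\Psi^{-1}({\bf u})$ along the coadjoint orbit determined by ${\bf u}$, and then to lift the resulting iterated bundle by one extra stage using multiplicity freeness.

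First I would observe that the top-degree components of $\Phi$ compute the invariant $\Pi(\xi)\in\frak{t}^*_+$ of $\xi\in\frak{so}(n)^*$, so any ${\bf u}$ in the image of $\Psi$ pins down a unique $\lambda\in\frak{t}^*_+$ with $\Phi^{-1}({\bf u})\subseteq\mcal{O}_\lambda^{(n)}$, and the restriction $\Phi|_{\mcal{O}_\lambda^{(n)}}$ is exactly the classical Gelfand--Cetlin system on $\mcal{O}_\lambda^{(n)}$. Applying Theorem~\ref{thm_fiber} (equivalently Proposition~\ref{proposition_mainiterated} together with Proposition~\ref{prop_isotropic}) to $E_n := \Phi|_{\mcal{O}_\lambda^{(n)}}^{-1}({\bf u})$ delivers both the iterated bundle $E_n \to E_{n-1} \to \cdots \to E_2 = \{\mathrm{pt}\}$ with point or product-of-sphere fibers at each stage, and the isotropic property of $E_n$ in $(\mcal{O}_\lambda^{(n)},\omega_\lambda^{(n)})$.

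Next I would construct the top stage $q_{n+1}\colon \Psi^{-1}({\bf u})\to E_n$ by restricting $\mu$. By Theorem~\ref{theorem_multiplicityfree} the $\mathrm{SO}(n)$-action on $M$ is multiplicity free, so $\mu^{-1}(\mcal{O}_\lambda^{(n)})$ is a single $\mathrm{SO}(n)$-orbit, hence (using properness of $\mu$) an embedded submanifold of $M$. Repeating the argument in the proof of Proposition~\ref{prop_projbundle}, namely the equivariant rank theorem plus Ehresmann's fibration theorem applied to the $\mathrm{SO}(n)$-equivariant surjection $\mu\colon \mu^{-1}(\mcal{O}_\lambda^{(n)}) \to \mcal{O}_\lambda^{(n)}$ between transitive $\mathrm{SO}(n)$-spaces, gives a locally trivial fibration with fiber $\mu^{-1}(x)$. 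Pulling this fibration back along the smooth inclusion $E_n \hookrightarrow \mcal{O}_\lambda^{(n)}$ yields $q_{n+1}$ with fiber $\mu^{-1}(x)$, as required. For the isotropic property, I would then take $\eta_1,\eta_2\in T_p\Psi^{-1}({\bf u})$; since $\Psi^{-1}({\bf u})\subseteq \mu^{-1}(\mcal{O}_\lambda^{(n)})$ and the latter is a single $\mathrm{SO}(n)$-orbit, write $\eta_i=\underline{X_i}_p$ with $X_i\in\frak{so}(n)$. Lemma~\ref{lemma_preserve} gives
$$\omega_p(\eta_1,\eta_2)=\omega^{\mathrm{KKS}}_{\mu(p)}(d\mu_p\eta_1,d\mu_p\eta_2),$$
and the chain rule for $\Psi=\Phi\circ\mu$ forces $d\mu_p\eta_i\in T_{\mu(p)}E_n$, so isotropy of $E_n$ in $\mcal{O}_\lambda^{(n)}$ closes out $\omega_p(\eta_1,\eta_2)=0$.

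The main obstacle I anticipate concerns the regularity inputs at the top stage over non-top faces: one must know that $\mu^{-1}(\mcal{O}_\lambda^{(n)})$ is globally a single embedded smooth orbit and that $q_{n+1}$ is locally trivial over an $E_n$ that is itself the total space of a nontrivial iterated bundle rather than a homogeneous space. The first point is handled cleanly by multiplicity freeness together with properness of $\mu$; the second is resolved by the observation that $E_n$ is a smooth closed submanifold of $\mcal{O}_\lambda^{(n)}$ (Theorem~\ref{thm_fiber}), so pulling back the smooth fibration $\mu^{-1}(\mcal{O}_\lambda^{(n)}) \to \mcal{O}_\lambda^{(n)}$ under $E_n \hookrightarrow \mcal{O}_\lambda^{(n)}$ is automatic.
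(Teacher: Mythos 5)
Your proposal is correct and follows essentially the same path as the paper: reduce to the classical coadjoint-orbit case via $\Psi^{-1}({\bf u})=\mu^{-1}(\Phi^{-1}({\bf u}))$ and Theorem~\ref{thm_fiber}, then use multiplicity freeness and Lemma~\ref{lemma_preserve} for isotropy. The only difference is that you spell out why $q_{n+1}$ is a locally trivial fibration (equivariant rank theorem plus Ehresmann applied to the transitive $\mathrm{SO}(n)$-orbit $\mu^{-1}(\mcal{O}_\lambda^{(n)})$, then pull back along $E_n\hookrightarrow\mcal{O}_\lambda^{(n)}$), a point the paper's proof leaves implicit; this is a worthwhile clarification, not a divergence.
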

	
	\begin{remark}\label{remark_comparelane2}
		In Section~\ref{secTheTopologyOfGelfandCetlinFibers}, we have described the fibers of $q_i$'s for $i$ with $3 \leq i \leq n$. 
		In the classical GC systems in~\eqref{equ_GCsystemsolambdabd}, the moment map $\mu$ can be simply taken as the inclusion $\mcal{O}_\lambda^{(n)} \to \frak{g}^*$. 
		Moreover, the inclusion $\mu$ induces the map $q_{n+1} \colon E_{n+1} \to E_n$ at the last stage of~\eqref{equ_iterated_general} and hence the fiber of $q_{n+1}$ is a single point. 
	\end{remark}

	\begin{proof}[Proof of Proposition \ref{theorem_general}]
		For any fixed ${\bf u} = (u_{i,j}) \in \R^N$, we have $\Psi^{-1}({\bf u}) = \mu^{-1}(\Phi^{-1}({\bf u}))$. 
		Note that $\Phi^{-1}({\bf u})$ is a GC fiber in $\mcal{O}_\lambda^{(n)}$ where $\lambda = (u_{1,n}, \cdots, u_{\lfloor n / 2 \rfloor, n+1 - \lfloor n / 2 \rfloor})$.
		In particular, $\Phi^{-1}({\bf u}) \subset \mcal{O}_\lambda$ so that 
		\[
			\mu^{-1}(\Phi^{-1}({\bf u})) \subset \mu^{-1}(\mcal{O}_\lambda).
		\]
		Thus, the description of the fibers of~\eqref{equ_iterated_general} follows from Theorem \ref{thm_fiber}.
		
		Since the action is multiplicity free by Theorem~\ref{theorem_multiplicityfree}, the induced $\rmso(n)$-action on $\mu^{-1}(\mcal{O}_\lambda)$ is transitive so that any tangent vector 	
		at any point $x \in \mu^{-1}(\Phi^{-1}({\bf u}))$ can be represented as the fundamental vector field $\underline{X}_x$ for some $X \in \frak{so}(n)$. 
		Applying Lemma \ref{lemma_preserve}, for any $\underline{X}_x, \underline{Y}_x \in T_x \mu^{-1}(\Phi^{-1}({\bf u}))$, we get
		\[
			\omega_x(\underline{X}_x, \underline{Y}_x) = \omega_\lambda (d\mu_x(\underline{X}_x), d\mu_x(\underline{Y}_x)) = 0 
		\]
		since $d\mu_x(\underline{X}_x)$ and $d\mu_x(\underline{Y}_x)$ are tangent to $\Phi^{-1}({\bf u})$, which is isotropic in $(\mcal{O}_\lambda, \omega_\lambda)$
		by Proposition \ref{prop_isotropic}.
	\end{proof}

\section{Fibers of iterated bundles at stages}\label{sectionproofoflemma}

In this section, we will complete the proof of Theorem~\ref{thm_fiber}. 

\subsection{Description of fibers of iterated bundles at stages}\label{secproofoflemma}

For a pair $(\frak{a}, \frak{b})$ in~\eqref{equ_givenfrakab}, consider a family of matrices of the form $Z_{\frak{a}, \frak{b}}({\bf{x},\bf{y}}) \in \mcal{O}_\frak{a}^{(m)}$
defined as follows. 

\begin{itemize}
\item \textbf{(Case 1)} When $m = 2 \ell + 1$ (odd), 
\begin{equation}\label{equ_maxtrix_odd}
Z_{\frak{a}, \frak{b}}(\bf{{x}},\bf{{y}}) := 
\begin{pmatrix}
   0 & b_1 & 0 & 0 & \cdots & 0  & 0 & x_1 \\
   -b_1 & 0 & 0 & 0 & \cdots & 0 & 0 & y_1 \\
   0 & 0 & 0 & b_2 & \cdots & 0 & 0 & x_2 \\
   0 & 0 & -b_2 & 0 & \cdots & 0 & 0 & y_2 \\
   \vdots & \vdots & \vdots & \vdots & \ddots & \vdots & \vdots & \vdots \\
   0 & 0 & 0 & 0 & \cdots & 0 & b_{\ell} & x_{\ell} \\   
   0 & 0 & 0 & 0 & \cdots & -b_{\ell} & 0 & y_{\ell} \\   
   -x_1 & -y_1 & -x_2 & -y_2 & \cdots & -x_{\ell} & -y_{\ell} & 0 \\   
\end{pmatrix}
\end{equation}
where $({\bf{{x}}},{\bf{{y}}}) \in \R^\ell \times \R^\ell$. 
\item \textbf{(Case 2)} When $m = 2 \ell + 2$ (even), 
\begin{equation}\label{equ_maxtrix_even}
Z_{\frak{a}, \frak{b}}(\bf{{x}},\bf{{y}}) := 
\begin{pmatrix}
   0 & b_1 & 0 & 0 & \cdots & 0  & 0 & 0 & x_1 \\
   -b_1 & 0 & 0 & 0 & \cdots & 0 & 0 & 0 & y_1 \\
   0 & 0 & 0 & b_2 & \cdots & 0 & 0 & 0 & x_2 \\
   0 & 0 & -b_2 & 0 & \cdots & 0 & 0 & 0 & y_2 \\
   \vdots & \vdots & \vdots & \vdots & \ddots & \vdots & \vdots & \vdots & \vdots \\
   0 & 0 & 0 & 0 & \cdots & 0 & b_{\ell} & 0 & x_{\ell} \\   
   0 & 0 & 0 & 0 & \cdots & -b_{\ell} & 0 & 0 & y_{\ell} \\   
   0 & 0 & 0 & 0 & \cdots & 0 & 0 & 0 & x_{\ell+1} \\   
   -x_1 & -y_1 & -x_2 & -y_2 & \cdots & -x_{\ell} & -y_{\ell} & -x_{\ell + 1} & 0 \\   
\end{pmatrix}
\end{equation}
where $({\bf{{x}}},{\bf{{y}}}) \in \R^{\ell+1} \times \R^\ell$. 
\end{itemize}

By~\eqref{equ_wildtildemcaloab}, $\widetilde{\mcal{O}}^{\vphantom{(m-1)}}_{\frak{a}, \frak{b}}$ is the subset of $\mcal{O}_\frak{a}^{(m)}$ consisting of 
$(m \times m)$-matrices of the form $Z^{\vphantom{(m-1)}}_{\frak{a}, \frak{b}}(\bf{{x}},\bf{{y}})$. Since an element in $\mcal{O}_\frak{a}^{(m)}$ can be characterized by its eigenvalues and its Pfaffian, we have the following lemma.

\begin{lemma} Let $\mcal{S}_{m}$ be the set of $(m \times m)$ skew-symmetric matrices. 
\begin{itemize}
\item When $m = 2 \ell + 1$ (odd), 
$$
\widetilde{\mcal{O}}_{\frak{a}, \frak{b}} = 
\left\{ A \in \mcal{S}_{m} \mid A = Z_{\frak{a}, \frak{b}}({\bf{{x}}},{\bf{{y}}}),\,\,  \textup{spec}(A) \equiv \langle \pm a_1 \sqrt{-1}, \dots, \pm a_\ell \sqrt{-1}, 0 \rangle \right\}.
$$
\item When $m = 2 \ell + 2$ (even),
$$
\widetilde{\mcal{O}}_{\frak{a}, \frak{b}} = 
\begin{cases}
\left\{ A \in \mcal{S}_{m} \mid A = Z_{\frak{a}, \frak{b}}({\bf{{x}}},{\bf{{y}}}),\,\,  \textup{spec}(A) \equiv \langle \pm a_1 \sqrt{-1}, \dots, \pm a_{\ell + 1} \sqrt{-1} \rangle \right\} &\mbox{if $a_{\ell+1} = 0$} \\
\left\{ A \in \mcal{S}_{m} \mid A = Z_{\frak{a}, \frak{b}}({\bf{{x}}},{\bf{{y}}}),\,\, \textup{pf}(A) > 0, \,\, \textup{spec}(A) \equiv \langle \pm a_1 \sqrt{-1}, \dots, \pm a_{\ell + 1} \sqrt{-1} \rangle \right\} &\mbox{if $a_{\ell+1} > 0$}\\
\left\{ A \in \mcal{S}_{m} \mid A = Z_{\frak{a}, \frak{b}}({\bf{{x}}},{\bf{{y}}}),\,\, \textup{pf}(A) < 0, \,\, \textup{spec}(A) \equiv \langle \pm a_1 \sqrt{-1}, \dots, \pm a_{\ell + 1} \sqrt{-1} \rangle \right\} &\mbox{if $a_{\ell+1} < 0$}.
\end{cases}
$$
\end{itemize}
\end{lemma}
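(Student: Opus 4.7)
The plan is to unpack the definition $\widetilde{\mcal{O}}_{\fa,\fb} = \rho_{\fa,\fb}^{-1}(I^{(m-1)}_\fb)$ together with the characterizations of the co-adjoint $\mathrm{SO}(m)$-orbit $\mcal{O}_\fa^{(m)}$ recorded in~\eqref{equ_coadjointorbit2nu+1} and~\eqref{equ_coadjointorbit2nu}. Membership of a skew-symmetric matrix $A$ in $\widetilde{\mcal{O}}_{\fa,\fb}$ is equivalent to the conjunction of two conditions: (i) $A^{(m-1)} = I^{(m-1)}_\fb$, and (ii) $A \in \mcal{O}_\fa^{(m)}$, i.e., $A$ has the prescribed spectrum (and in the even case, the prescribed Pfaffian). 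The proof then amounts to translating condition (i) into the explicit matrix shape $Z_{\fa,\fb}({\bf x},{\bf y})$ and repackaging condition (ii) in its spectral/Pfaffian form.

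For the odd case $m = 2\ell + 1$, the matrix $I^{(m-1)}_\fb = I^{(2\ell)}_\fb$ is block diagonal with the $2\times 2$ blocks $B(b_1), \ldots, B(b_\ell)$. Imposing $A^{(m-1)} = I^{(2\ell)}_\fb$ completely determines the upper-left $(2\ell)\times(2\ell)$ principal submatrix of $A$; skew-symmetry then forces the $(m,m)$-entry to vanish, leaving $2\ell$ free parameters in the last column, which we label $x_1, y_1, \ldots, x_\ell, y_\ell$. The remaining last-row entries are determined by skew-symmetry, yielding exactly the form~\eqref{equ_maxtrix_odd}. Condition (ii) for odd $m$ is, by~\eqref{equ_coadjointorbit2nu+1}, precisely $\textup{spec}(A) \equiv \langle \pm a_1 \sqrt{-1}, \ldots, \pm a_\ell \sqrt{-1}, 0 \rangle$, and no Pfaffian condition is needed since $m$ is odd.

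For the even case $m = 2\ell + 2$, the matrix $I^{(m-1)}_\fb = I^{(2\ell+1)}_\fb$ consists of the blocks $B(b_1), \ldots, B(b_\ell)$ together with a single zero on the diagonal at position $(2\ell+1, 2\ell+1)$. The same reasoning forces $A$ to have the form~\eqref{equ_maxtrix_even} with $2\ell + 1$ free parameters $x_1, y_1, \ldots, x_\ell, y_\ell, x_{\ell+1}$. By~\eqref{equ_coadjointorbit2nu}, condition (ii) is then $\textup{spec}(A) \equiv \langle \pm a_1\sqrt{-1}, \ldots, \pm a_{\ell+1}\sqrt{-1}\rangle$ together with $\textup{pf}(A) = a_1 \cdots a_{\ell+1}$. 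Because $\fa$ lies in the positive Weyl chamber, $a_1 \geq \cdots \geq a_\ell \geq |a_{\ell+1}| \geq 0$, so the sign of the product $a_1 \cdots a_{\ell+1}$ is governed entirely by the sign of $a_{\ell+1}$. This gives the three-case description: $a_{\ell+1} = 0$ forces $\textup{pf}(A) = 0$ automatically, so only the spectrum survives; $a_{\ell+1} > 0$ forces $\textup{pf}(A) > 0$; and $a_{\ell+1} < 0$ forces $\textup{pf}(A) < 0$.

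The main subtlety, and really the only non-formal step, is the sign bookkeeping in the even case: the spectrum $\{\pm a_i \sqrt{-1}\}$ alone cannot distinguish between $\fa = (a_1, \ldots, a_\ell, a_{\ell+1})$ and $\fa' = (a_1, \ldots, a_\ell, -a_{\ell+1})$ when $a_{\ell+1} \neq 0$, which is why the Pfaffian sign must be invoked to pin down the correct orbit. Beyond this, both inclusions in the claimed equality reduce to reading off the matrix shape from the leading principal submatrix constraint and recognizing $\mcal{O}_\fa^{(m)}$ through its spectral/Pfaffian defining conditions.
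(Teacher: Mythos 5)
Your proposal is correct and follows essentially the same route as the paper, which states that the lemma ``follows from~\eqref{equ_coadjointorbit2nu+1} and~\eqref{equ_coadjointorbit2nu}''; you have simply made explicit the translation of the constraint $A^{(m-1)} = I^{(m-1)}_{\fb}$ into the matrix shape $Z_{\fa,\fb}(\mathbf{x},\mathbf{y})$ and the replacement of the Pfaffian value $\mathrm{pf}(A) = a_1\cdots a_{\ell+1}$ by the sign of the Pfaffian (which is equivalent once the spectrum is fixed, since $\mathrm{pf}(A)^2 = \det A$ is spectrum-determined). The sign bookkeeping in the even case is handled correctly: since $a_1 \ge \cdots \ge a_\ell \ge |a_{\ell+1}|$, the sign of $\prod a_i$ is indeed the sign of $a_{\ell+1}$, and when $a_{\ell+1}=0$ the Pfaffian condition is vacuous.
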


\begin{proof}
It follows from~\eqref{equ_coadjointorbit2nu+1} and~\eqref{equ_coadjointorbit2nu}. 
\end{proof}

For later purpose, we compute characteristic polynomials of~\eqref{equ_maxtrix_odd} and~\eqref{equ_maxtrix_even}.

\begin{lemma}\label{lemma_char} The characteristic polynomial of $Z_{\frak{a}, \frak{b}}({\bf x}, {\bf y})$ can be written as 
\[
	\det \left( \xi I -  Z_{\frak{a}, \frak{b}}({\bf x},{\bf y}) \right) =  
	\begin{cases} \vspace{0.3cm}
		\xi \cdot \left( \prod_{i=1}^\ell (\xi^2 + b_i^2) + \sum_{j=1}^{\ell} \left( \frac{x_j^2 + y_j^2}{\xi^2 + b_j^2} \cdot \prod_{i=1}^\ell (\xi^2 + b_i^2) \right) \right) &\text{
		if $m = 2 \ell + 1$,}\\
		(\xi^2 + x_{\ell + 1}^2) \cdot \prod_{i=1}^\ell (\xi^2 + b_i^2) + \sum_{j=1}^{\ell} \left( \xi^2 \cdot \frac{x_j^2 + y_j^2}{\xi^2 + b_j^2} \cdot \prod_{i=1}^\ell (\xi^2 + b_i^2) \right)
		&\text{
		if $m = 2 \ell + 2$}.
	\end{cases}
\]
\end{lemma}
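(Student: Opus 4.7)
The plan is to exploit the block structure of $Z_{\frak{a}, \frak{b}}({\bf x}, {\bf y})$ together with the Schur complement formula. In both cases the upper-left $(2\ell) \times (2\ell)$ block of $\xi I - Z_{\frak{a}, \frak{b}}({\bf x}, {\bf y})$ is the block-diagonal matrix
\[
D := \mathrm{diag}\left( \xi I_2 - B(b_1),\, \ldots,\, \xi I_2 - B(b_\ell) \right), \qquad \xi I_2 - B(b_j) = \begin{pmatrix} \xi & -b_j \\ b_j & \xi \end{pmatrix},
\]
which satisfies $\det(D) = \prod_{j=1}^{\ell} (\xi^2 + b_j^2)$, and whose inverse is block-diagonal with $(\xi I_2 - B(b_j))^{-1} = \frac{1}{\xi^2 + b_j^2} \bigl(\begin{smallmatrix} \xi & b_j \\ -b_j & \xi \end{smallmatrix}\bigr)$. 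I would first derive the formula assuming $D$ is invertible (i.e.\ for generic $\xi$); since both sides are polynomials in $\xi$, the identity then extends to all $\xi$ by continuity.

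For the odd case $m = 2\ell + 1$, partitioning off the last row and column produces
\[
\xi I - Z_{\frak{a}, \frak{b}}({\bf x}, {\bf y}) = \begin{pmatrix} D & -v \\ v^T & \xi \end{pmatrix}, \qquad v := (x_1, y_1, \ldots, x_\ell, y_\ell)^T,
\]
and the Schur complement formula yields $\det(\xi I - Z) = \det(D) \cdot (\xi + v^T D^{-1} v)$. A direct computation with the $2 \times 2$ inverses above gives $(x_j, y_j)(\xi I_2 - B(b_j))^{-1}(x_j, y_j)^T = \xi (x_j^2 + y_j^2)/(\xi^2 + b_j^2)$, and hence $v^T D^{-1} v = \xi \sum_{j=1}^{\ell} (x_j^2 + y_j^2)/(\xi^2 + b_j^2)$. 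Factoring out the common $\xi$ out front reproduces the claimed formula exactly.

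For the even case $m = 2\ell + 2$, I would instead partition the last two rows and columns together into a $2 \times 2$ block, writing
\[
\xi I - Z_{\frak{a}, \frak{b}}({\bf x}, {\bf y}) = \begin{pmatrix} D & V \\ -V^T & E \end{pmatrix}, \qquad V = \bigl( \mathbf{0} \mid -v \bigr), \quad E = \begin{pmatrix} \xi & -x_{\ell+1} \\ x_{\ell+1} & \xi \end{pmatrix}.
\]
Because the first column of $V$ is zero, $V^T D^{-1} V$ is supported only in its lower-right entry, equal to the quantity $v^T D^{-1} v$ computed in the odd case. The Schur complement formula then gives $\det(E + V^T D^{-1} V) = \xi^2 + x_{\ell+1}^2 + \xi \cdot v^T D^{-1} v$; multiplying by $\det(D)$ and distributing produces the stated even-case formula. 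The computation is entirely elementary; the only mildly delicate point is choosing the block partition so that the Pfaffian-sign-sensitive entry $x_{\ell+1}$ is absorbed into the $2 \times 2$ block $E$ rather than into the long vector $v$, which keeps the Schur complement scalar-free and isolates the dependence on $({\bf x}, {\bf y})$ in the additive term $v^T D^{-1} v$.
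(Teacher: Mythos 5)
Your proof is correct and complete. The paper states Lemma~\ref{lemma_char} without supplying a proof, so there is no argument in the text to compare against; your approach via the Schur complement (partitioning off the last one or two rows and columns, using the explicit $2\times 2$ inverses $(\xi I_2 - B(b_j))^{-1} = \frac{1}{\xi^2+b_j^2}\bigl(\begin{smallmatrix}\xi & b_j\\ -b_j & \xi\end{smallmatrix}\bigr)$, and extending from generic $\xi$ by polynomial continuity) is a clean and correct way to establish both the odd and even cases. The one small stylistic point is that you reuse the symbol $D$ both for the upper-left $2\ell\times 2\ell$ block and (implicitly) for the lower-right block in the general Schur formula; renaming one of them would remove any risk of confusion, but the computation itself is unambiguous and the cross-terms $b_j x_j y_j$ cancel exactly as you indicate, giving $v^T D^{-1} v = \xi\sum_j (x_j^2+y_j^2)/(\xi^2+b_j^2)$ and hence the two displayed formulas.
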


Consider the projection
\begin{equation}\label{equ_projectionoadr}
\Pi \colon \widetilde{\mcal{O}}_{\frak{a}, \frak{b}} \to 
\begin{cases}
\R^{\ell} \times \R^{\ell},  &\left(Z_{\frak{a}, \frak{b}}(\bf{{x}},\bf{{y}}) \mapsto (\bf{{x}},\bf{{y}})\right) \quad \mbox{ if $m = 2 \ell + 1$}, \\
\R^{\ell + 1} \times \R^{\ell},  &\left(Z_{\frak{a}, \frak{b}}(\bf{{x}},\bf{{y}}) \mapsto (\bf{{x}},\bf{{y}})\right) \quad \mbox{ if $m = 2 \ell + 2$}.
\end{cases}
\end{equation}
Since $\Pi$ is an embedding, it suffices to figure out the image of $\widetilde{\mcal{O}}_{\frak{a}, \frak{b}}$ under~\eqref{equ_projectionoadr} in order to describe the diffeomorphic type of $\widetilde{\mcal{O}}_{\frak{a}, \frak{b}}$. 
For the characterization, observe the following equivalences$\colon$
\begin{itemize}
\item $(m = 2 \ell + 1)$
\begin{equation}\label{equ_detpix}
\textup{spec}(Z_{\frak{a}, \frak{b}}({\bf{{x}}},{\bf{{y}}})) \equiv \langle \pm a_1 \sqrt{-1}, \dots, \pm a_{\ell} \sqrt{-1}, 0 \rangle \Leftrightarrow \det \left( \xi I -  Z_{\frak{a}, \frak{b}}({\bf x},{\bf y}) \right) = \xi \cdot \prod_{i=1}^\ell (\xi^2 + a_i^2)
\end{equation}
\item $(m = 2 \ell + 2)$
\begin{equation}\label{equ_detpix2}
\textup{spec}(Z_{\frak{a}, \frak{b}}({\bf{{x}}},{\bf{{y}}})) \equiv \langle \pm a_1 \sqrt{-1}, \dots, \pm a_{\ell+1} \sqrt{-1} \rangle \Leftrightarrow \det \left( \xi I -  Z_{\frak{a}, \frak{b}}({\bf x},{\bf y}) \right) = \prod_{i=1}^{\ell+1} (\xi^2 + a_i^2)
\end{equation}
\end{itemize}
In the case where $m = 2 \ell + 2$ and $a_{\ell + 1} \neq 0$, one additional condition on the Pfaffian should be imposed. A direct computation of the Pfaffian of $Z_{\frak{a}, \frak{b}}$ yields that
$\mathrm{pf}(Z_{\frak{a}, \frak{b}}({\bf{x}}, {\bf{y}})) = x_{\ell+1} \cdot \prod_{j=1}^\ell b_j.$
Because $a_{\ell + 1} \neq 0$ implies that $b_j > |a_{\ell + 1}| > 0$ for each $j$ with $j < \ell + 1$  by the min-max principle, we have
\begin{equation}\label{equ_detpix3}
\mathrm{pf}(Z_{\frak{a}, \frak{b}}({\bf{x}}, {\bf{y}})) > 0 \,\, (\mathrm{resp}. < 0) \Leftrightarrow x_{\ell + 1} > 0 \,\, (\mathrm{resp}. < 0).
\end{equation}
Indeed, $x_{\ell+1}$ is uniquely determined by $\frak{a}$ and $\frak{b}$ in this case. 

\begin{lemma}\label{lemma_Pfaffian}
When $m = 2 \ell + 2$, $x_{\ell+1}$ in~\eqref{equ_maxtrix_even} is determined as
\begin{equation}\label{equ_xell1}
x_{\ell+1} =  \frac{\prod_{j = 1}^{\ell + 1} a_i}{\prod_{j=1}^\ell b_j}
\end{equation}
provided that ${\Pi_{j=1}^\ell b_j} \neq 0$ 
\end{lemma}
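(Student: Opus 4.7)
The plan is to prove the lemma by combining two already-available expressions for the Pfaffian of $Z_{\frak{a}, \frak{b}}({\bf x}, {\bf y})$ and comparing them. Since $m = 2 \ell + 2$ is even, the Pfaffian is a well-defined polynomial invariant of a skew-symmetric matrix. The key point is that, because of~\eqref{equ_coadjointorbit2nu}, every matrix $A$ in the co-adjoint orbit $\mcal{O}_\fa^{(m)}$ has the same Pfaffian as the block-diagonal representative $I_\fa^{(m)}$; in particular, for $A = Z_{\frak{a}, \frak{b}}({\bf x}, {\bf y})$ we obtain
\[
\mathrm{pf}\bigl(Z_{\frak{a}, \frak{b}}({\bf x}, {\bf y})\bigr) \;=\; \mathrm{pf}\bigl(I^{(m)}_{\frak{a}}\bigr) \;=\; \prod_{i=1}^{\ell + 1} a_i.
\]

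On the other hand, the text immediately before the lemma already records the direct computation
\[
\mathrm{pf}\bigl(Z_{\frak{a}, \frak{b}}({\bf x}, {\bf y})\bigr) \;=\; x_{\ell+1} \cdot \prod_{j=1}^{\ell} b_j.
\]
For completeness, this identity can be obtained by Pfaffian row expansion along the $(2\ell + 1)$-st row of $Z_{\frak{a}, \frak{b}}({\bf x}, {\bf y})$ in~\eqref{equ_maxtrix_even}: that row has a single nonzero entry $x_{\ell+1}$ in the last column, so the expansion collapses to $x_{\ell+1}$ times the Pfaffian of the block-diagonal submatrix obtained by deleting rows and columns $2\ell + 1$ and $2 \ell + 2$, and the latter Pfaffian is $\prod_{j=1}^{\ell} b_j$ since the residual matrix is the direct sum of the $2 \times 2$ blocks $B(b_j)$.

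Equating the two expressions and using the hypothesis $\prod_{j=1}^{\ell} b_j \neq 0$ to divide, we immediately recover~\eqref{equ_xell1}. The only mild subtlety is a sign check in the Pfaffian expansion (to confirm that the coefficient of $x_{\ell+1}$ is $+\prod b_j$ rather than $-\prod b_j$), which can be settled by testing on the representative matrix $I^{(m)}_{\frak{a}}$ with $b_j = a_j$ for $j \le \ell$ and $x_{\ell+1} = a_{\ell+1}$, for which both sides are manifestly equal to $\prod_{i=1}^{\ell+1} a_i$. I expect no further obstacles; the entire argument is a comparison of two closed-form expressions for the same polynomial invariant.
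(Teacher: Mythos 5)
Your proof is correct and takes essentially the same approach as the paper: both compare the direct computation $\mathrm{pf}(Z_{\frak{a}, \frak{b}}({\bf x}, {\bf y})) = x_{\ell+1} \prod_{j=1}^{\ell} b_j$ with the orbit invariance $\mathrm{pf}(Z_{\frak{a}, \frak{b}}({\bf x}, {\bf y})) = \mathrm{pf}(I^{(m)}_{\frak{a}}) = \prod_{i=1}^{\ell+1} a_i$ (the paper phrases this via $\mathrm{pf}(Q^T A Q) = \det(Q)\,\mathrm{pf}(A)$ for $Q \in \mathrm{SO}(2\ell+2)$, you via~\eqref{equ_coadjointorbit2nu}) and then divide. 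Your extra Pfaffian row expansion and sign check merely fill in the "direct computation" that the paper states without proof just before the lemma.
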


\begin{proof}
By comparing 
$\mathrm{pf}(Z_{\frak{a}, \frak{b}}({\bf{x}}, {\bf{y}})) = x_{\ell+1} \cdot \prod_{j=1}^\ell b_j$ and
$\mathrm{pf}(I^{(m)}_{\frak{a}}) = {\prod_{j = 1}^{\ell + 1} a_i}$,
\eqref{equ_xell1} follows from $\mathrm{pf}(I^{(m)}_{\frak{a}}) = \mathrm{pf}(Q^T Z_{\frak{a}, \frak{b}}({\bf x}, {\bf y}) Q) = \det(Q) \cdot 
\mathrm{pf}(Z_{\frak{a}, \frak{b}}({\bf x}, {\bf y}))$ for some $Q \in \mathrm{SO}(2\ell + 2)$.
\end{proof}

Recall from Corollary~\ref{cor_fiberhomegenous} that $\widetilde{\mcal{O}}_{\frak{a}, \frak{b}}$ is a homogeneous $\scr{L}_{\frak{b}}$-space where $\scr{L}_{\frak{b}}$ is the stabilizer subgroup for the co-adjoint $\mathrm{SO}(m-1)$-action at $I^{(m-1)}_{\frak{b}}$. 
The stabilizer subgroup $\scr{L}_{\frak{b}}$ as a subgroup of $\mathrm{SO}(m-1)$ acts on $\R^{m-1}$ linearly.
It is straightforward to see that the embedding $\Pi$ in~\eqref{equ_projectionoadr} is $\scr{L}_{\frak{b}}$-equivariant. 
Let $\scr{S}_{\frak{a},\frak{b}}$ be the stabilizer subgroup of a point in the image $\Pi( \widetilde{\mcal{O}}_{\frak{a}, \frak{b}})$ for the action of $\scr{L}_{\frak{b}}$.
We then obtain the following description for $\widetilde{\mcal{O}}_{\frak{a}, \frak{b}}$.

\begin{lemma}\label{lemmaOabls}
The space $\widetilde{\mcal{O}}_{\frak{a}, \frak{b}}$ is diffeomorphic to $\scr{L}_{\frak{b}} / \scr{S}_{\frak{a},\frak{b}}$.
\end{lemma}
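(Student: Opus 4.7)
The plan is to combine Corollary~\ref{cor_fiberhomegenous} with the $\scr{L}_{\frak{b}}$-equivariance and injectivity of the embedding $\Pi$ in~\eqref{equ_projectionoadr}, after which the statement reduces to the standard orbit-stabilizer diffeomorphism for a smooth transitive compact Lie group action.

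First, I would pin down why $\scr{L}_{\frak{b}}$ acts transitively on $\widetilde{\mcal{O}}_{\frak{a},\frak{b}}$. Given two points $p, q \in \widetilde{\mcal{O}}_{\frak{a},\frak{b}} = \rho_{\frak{a},\frak{b}}^{-1}(I^{(m-1)}_{\frak{b}})$, Lemma~\ref{lemma_transitive} supplies some $g \in \mathrm{SO}(m-1)$ with $g \cdot p = q$, and the $\mathrm{SO}(m-1)$-equivariance of $\rho_{\frak{a},\frak{b}}$ (Lemma~\ref{lemma_equivariant_map}) then forces $g \cdot I^{(m-1)}_{\frak{b}} = I^{(m-1)}_{\frak{b}}$, i.e.\ $g \in \scr{L}_{\frak{b}}$. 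This is precisely the content of Corollary~\ref{cor_fiberhomegenous}, so I would just invoke it: $\widetilde{\mcal{O}}_{\frak{a},\frak{b}}$ is a homogeneous $\scr{L}_{\frak{b}}$-space and is an embedded submanifold of $\mcal{O}_{\frak{a}}^{(m)}$.

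Next, for a chosen basepoint $p_0 \in \widetilde{\mcal{O}}_{\frak{a},\frak{b}}$, the standard theorem on smooth transitive actions of compact Lie groups (see, e.g., \cite[Theorem 21.18]{Leesmooth}) yields a canonical diffeomorphism
\[
\scr{L}_{\frak{b}} / \mathrm{Stab}_{\scr{L}_{\frak{b}}}(p_0) \xrightarrow{\;\;\sim\;\;} \widetilde{\mcal{O}}_{\frak{a},\frak{b}}, \qquad g \, \mathrm{Stab}_{\scr{L}_{\frak{b}}}(p_0) \mapsto g \cdot p_0.
\]
It remains to identify $\mathrm{Stab}_{\scr{L}_{\frak{b}}}(p_0)$ with $\scr{S}_{\frak{a},\frak{b}}$. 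For this I would use that the projection $\Pi$ in~\eqref{equ_projectionoadr} is an embedding and $\scr{L}_{\frak{b}}$-equivariant with respect to the restriction of the natural linear $\mathrm{SO}(m-1)$-action on $\R^{m-1}$ (or $\R^{m-1} \times \R^\ell$ in the odd case). Equivariance gives $\mathrm{Stab}_{\scr{L}_{\frak{b}}}(p_0) \subseteq \mathrm{Stab}_{\scr{L}_{\frak{b}}}(\Pi(p_0)) = \scr{S}_{\frak{a},\frak{b}}$, and the injectivity of $\Pi$ gives the reverse inclusion, so the two stabilizers coincide.

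There is no substantial obstacle: Corollary~\ref{cor_fiberhomegenous} already does the heavy lifting, and the rest is bookkeeping with equivariant maps. The only point worth stating carefully is that $\scr{S}_{\frak{a},\frak{b}}$ is independent of which point in $\Pi(\widetilde{\mcal{O}}_{\frak{a},\frak{b}})$ we base it at (up to conjugation within $\scr{L}_{\frak{b}}$, which does not affect the diffeomorphism type of $\scr{L}_{\frak{b}}/\scr{S}_{\frak{a},\frak{b}}$), and this follows from the transitivity just established.
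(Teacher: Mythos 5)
Your proof is correct and matches the paper's intended argument: the paper leaves Lemma~\ref{lemmaOabls} unproved precisely because, after Corollary~\ref{cor_fiberhomegenous}, the $\scr{L}_{\frak{b}}$-equivariance of $\Pi$, and the definition of $\scr{S}_{\frak{a},\frak{b}}$ have all been set up, it follows immediately from the orbit--stabilizer diffeomorphism exactly as you describe, and your identification of $\mathrm{Stab}_{\scr{L}_{\frak{b}}}(p_0)$ with $\scr{S}_{\frak{a},\frak{b}}$ via equivariance and injectivity of $\Pi$ is the point the paper implicitly relies on. One small slip: the parenthetical ``or $\R^{m-1}\times\R^\ell$ in the odd case'' is wrong --- by \eqref{equ_projectionoadr} the target of $\Pi$ is $\R^\ell\times\R^\ell=\R^{m-1}$ when $m=2\ell+1$ and $\R^{\ell+1}\times\R^\ell=\R^{m-1}$ when $m=2\ell+2$, so the ambient space is $\R^{m-1}$ in both cases; this does not affect the argument.
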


Thus, by analyzing $\scr{L}_{\frak{b}}$ and $\scr{S}_{\frak{a},\frak{b}}$, we may describe the fiber $\widetilde{\mcal{O}}_{\frak{a}, \frak{b}}$. 
We begin by describing $\scr{L}_{\frak{b}}$. 

\begin{lemma}\label{lemma_levisubgroup}
Suppose that $\frak{b} = (b_1, \cdots, b_\ell)$ satisfies
$$
b_1 = \dots = b_{\ell_1}  >  b_{\ell_1 + 1} = \dots = b_{\ell_2} > b_{\ell_2 + 1} = \dots > b_{\ell_{r-1} + 1}  = \dots = 
b_{\ell_r - 1}  = 
|b_{\ell_r}| \geq 0
$$
where $\ell_r = \ell$. Set $\ell_0 = 0$. 
Then $\scr{L}_\frak{b}$ is diffeomorphic to the product $\prod_{j=1}^r \scr{L}_j$ where $\scr{L}_j$ is determined as follows.
\begin{itemize}
\item When $m = 2 \ell + 1$ (odd),  
$$
\scr{L}_j := \mathrm{SO}(2\ell_j -  2\ell_{j-1}) \quad \mbox{for $j = 1, \cdots, r.$}
$$  
\item When $m = 2 \ell + 2$ (even), 
\begin{align*}
&\scr{L}_j := \mathrm{SO}(2\ell_j -  2\ell_{j-1}) \,\, \quad \quad \quad \quad \quad  \quad \quad \quad \quad \quad \quad \quad \quad \quad \mbox{for $j = 1, \cdots, r-1$}, \\
&\scr{L}_{r} :=
\begin{cases}
\mathrm{SO}(2\ell_r -  2\ell_{r-1}) \times \mathrm{SO}(1) \simeq \mathrm{SO}(2\ell_r -  2\ell_{r-1}) &\mbox{if $b_{\ell_r} \neq 0$}, \\
\mathrm{SO}(2\ell_r -  2\ell_{r-1} + 1)  &\mbox{if $b_{\ell_r} =0$}.
\end{cases}
\end{align*}
\end{itemize}
\end{lemma}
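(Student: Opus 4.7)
The plan is to identify $\scr{L}_\fb$ with the centralizer of $I^{(m-1)}_\fb$ in $\mathrm{SO}(m-1)$ under conjugation, then exploit the eigenvalue structure of this block-diagonal skew-symmetric matrix to factor the centralizer into pieces corresponding to equal-value blocks. Via the $\mathrm{Ad}$-invariant pairing~\eqref{equation_metric}, the coadjoint action on $\frak{so}(m-1)^*$ is transported to conjugation on $\frak{so}(m-1)$, so $\scr{L}_\fb = \{ Q \in \mathrm{SO}(m-1) : Q\, I^{(m-1)}_\fb = I^{(m-1)}_\fb\, Q\}$, which reduces the computation to a question about commutants in the orthogonal group.

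First I would perform a block decomposition of $\R^{m-1}$. Group the paired coordinates according to the partition $\ell_0 < \ell_1 < \cdots < \ell_r = \ell$ dictated by the equalities among the $|b_i|$'s. This yields an orthogonal direct-sum decomposition $\R^{m-1} = V_1 \oplus \cdots \oplus V_r$ (to which the residual one-dimensional fixed direction, present when $m-1$ is odd, is attached to $V_r$ exactly when $b_{\ell_r}=0$). The restriction of $I^{(m-1)}_\fb$ to $V_j$ is $|b_{\ell_j}|$ times a block-diagonal complex structure on nonzero blocks and is the zero operator on any zero-eigenvalue block. Because the absolute eigenvalues are pairwise distinct across different blocks, every matrix commuting with $I^{(m-1)}_\fb$ must preserve this decomposition, giving a canonical inclusion $\scr{L}_\fb \hookrightarrow \prod_j \mathrm{O}(V_j)$.

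Next, I would analyze the commutant on each block separately. A zero-eigenvalue block contributes the full orthogonal group of that block, since any orthogonal automorphism commutes with the zero operator; on a nonzero-eigenvalue block of real dimension $2(\ell_j-\ell_{j-1})$, the commutant inside the orthogonal group consists exactly of those transformations compatible with the induced complex structure. Matching these block factors with the claimed $\scr{L}_j$, and then imposing the global determinant-one constraint inherited from $\mathrm{SO}(m-1)$, yields the asserted product decomposition $\scr{L}_\fb \cong \prod_{j=1}^r \scr{L}_j$.

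The main obstacle is the parity bookkeeping in the case $m=2\ell+2$, where the ambient dimension $m-1=2\ell+1$ is odd and therefore always carries a distinguished fixed direction (the last coordinate of $\R^{m-1}$). When $b_{\ell_r}\neq 0$ this extra direction sits by itself and contributes the trivial $\mathrm{SO}(1)$ factor noted in the statement, while when $b_{\ell_r}=0$ it fuses with the last block into a single zero-eigenvalue subspace of odd dimension $2(\ell_r-\ell_{r-1})+1$, enlarging the last factor accordingly. Verifying that the global $\det=1$ condition from $\mathrm{SO}(m-1)$ does not further cut down any $\scr{L}_j$---which hinges on the connectedness of each factor and the ability to absorb orientation sign-flips between blocks---is the other delicate point of the argument.
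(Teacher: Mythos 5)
Your block decomposition and the identification of the commutant on a zero-eigenvalue block as the full orthogonal group (then cut down to $\mathrm{SO}$ by the global determinant condition) are sound. The step that breaks down is the ``matching'' of the nonzero-block commutant with the stated $\scr{L}_j$. On a nonzero-eigenvalue summand $V_j \cong \R^{2k}$ (with $k = \ell_j - \ell_{j-1}$) on which $I_\fb^{(m-1)}$ acts as a nonzero scalar times a complex structure $J_j$, the orthogonal commutant is, as you say, the set of transformations compatible with $J_j$; this is the unitary group $\mathrm{U}(k)$ embedded in $\mathrm{SO}(2k)$. But $\mathrm{U}(k)$ is not diffeomorphic to $\mathrm{SO}(2k)$ for $k \geq 2$ since $\dim \mathrm{U}(k) = k^2$ while $\dim \mathrm{SO}(2k) = k(2k-1)$. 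A concrete instance: for $m - 1 = 4$ and $\fb = (b, b)$ with $b > 0$, the stabilizer of $I_\fb^{(4)} = bJ$ in $\mathrm{SO}(4)$ is $\mathrm{U}(2)$ (four-dimensional), not $\mathrm{SO}(4)$ (six-dimensional), consistent with the co-adjoint orbit $\mathrm{SO}(4)/\mathrm{U}(2) \cong S^2$ being two-dimensional. So the purported matching does not hold, and there is a genuine gap at that step.

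In fact, this reveals that the lemma as stated does not hold whenever some group of equal $|b_i|$'s with nonzero common value has length $\geq 2$; the correct assertion replaces $\mathrm{SO}(2\ell_j - 2\ell_{j-1})$ by $\mathrm{U}(\ell_j - \ell_{j-1})$ for each such nonzero block. This does not affect the paper's conclusions: the downstream lemmas only use the quotients $\scr{L}_\tau / \scr{S}_\tau$, and the corrected and stated groups yield the same quotients, namely $\mathrm{U}(k)/\mathrm{U}(k-1) \cong S^{2k-1} \cong \mathrm{SO}(2k)/\mathrm{SO}(2k-1)$ and $\mathrm{U}(k)/\mathrm{U}(k) \cong \{\mathrm{pt}\} \cong \mathrm{SO}(2k)/\mathrm{SO}(2k)$. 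Your concern about absorbing orientation sign-flips between blocks also dissolves under this correction: the nonzero factors $\mathrm{U}(k)$ are connected and contained in $\mathrm{SO}(2k)$, so the global $\det = 1$ condition acts only on the single $\mathrm{O}(d)$ factor coming from the zero eigenspace, cutting it to $\mathrm{SO}(d)$.
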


To analyze $\scr{S}_{\frak{a},\frak{b}}$, observe that the only groups that can occur as stabilizers for the action of $\scr{L}_\frak{b}$ on $\R^{m-1}$ are 
\begin{equation}\label{equ_stabilizergroup}
\scr{S}_{\frak{a},\frak{b}} \simeq \prod_{j=1}^r \scr{S}_j
\end{equation}
where $\scr{S}_j$ is diffeomorphic to either $\mathrm{SO}(k)$ or $\mathrm{SO}(k-1)$ provided $\scr{L}_j = \mathrm{SO}(k)$.
Notice that
\begin{itemize}
\item
$\scr{S}_j \simeq \mathrm{SO}(k)$ if all components of $({\bf{x}}, {\bf{y}})$ corresponding to $\scr{L}_j$ in the image $\Pi( \widetilde{\mcal{O}})$ vanish.
\item
$\scr{S}_j \simeq \mathrm{SO}(k-1)$ otherwise.
\end{itemize}

We first deal with the most generic case. 

\begin{lemma}\label{lemma_case0}
Suppose that all elements of $|\frak{{a}}| \cup |\frak{{b}}|$ in~\eqref{equ_givenfrakab} are pairwise distinct and non-zero. 
Let $({\bf x}, {\bf y})  = \Pi (Z_{\fa, \fb}({\bf x}, {\bf y})) \in \R^{m-1}$ and $\scr{S}_{\frak{a},\frak{b}}$ the stabilizer subgroup of $({\bf x}, {\bf y})$.
Then each factor $\scr{S}_j$ in~\eqref{equ_stabilizergroup} is isomorphic to $\mathrm{SO}(1)$, i.e., trivial.
In particular, $\widetilde{\mcal{O}}_{\frak{a}, \frak{b}}$ is diffeomorphic to $(\mathrm{SO}(2)/\mathrm{SO}(1))^\ell \simeq (\mathbb{S}^1)^\ell \simeq
\mcal{S}_{\frak{a}, \frak{b}}$ in ~\eqref{equ_fibersatkthstageab}.
\end{lemma}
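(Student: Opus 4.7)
The plan is to combine Lemma~\ref{lemmaOabls} with an explicit determination of the stabilizer subgroup $\scr{S}_{\fa,\fb}$, which reduces the statement to a sign computation extracted from the characteristic polynomial in Lemma~\ref{lemma_char}. Under the generic hypothesis, the strict chain $a_1 > b_1 > a_2 > b_2 > \cdots$ coming from~\eqref{equ_givenfrakab} forces each repeated block in Lemma~\ref{lemma_levisubgroup} to have length one, so $\scr{L}_\fb \simeq \mathrm{SO}(2)^\ell$; in the even case with $b_{\ell_r}\neq 0$ the factor $\mathrm{SO}(2\ell_r-2\ell_{r-1})\times\mathrm{SO}(1)$ similarly collapses to a single $\mathrm{SO}(2)$. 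The action of $\scr{L}_\fb$ on $\R^{m-1}$ then decomposes into $\ell$ independent rotations on the planes $\R^2_{(x_j,y_j)}$ and, in the even case, a trivial action on the $x_{\ell+1}$-axis.

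Since $\mathrm{SO}(2)$ acts freely on $\R^2\setminus\{(0,0)\}$, each factor of $\scr{S}_{\fa,\fb}$ will be trivial once $(x_j,y_j)\neq(0,0)$ for every $j=1,\ldots,\ell$. This nonvanishing is the main technical step. I would substitute $\xi^2 = -b_k^2$ into the polynomial identity obtained by equating Lemma~\ref{lemma_char} with the spectral condition~\eqref{equ_detpix} or~\eqref{equ_detpix2}. Every summand with $j\neq k$ carries a factor $(\xi^2+b_k^2)$ and vanishes, producing
\[
x_k^2+y_k^2 \;=\; \frac{\prod_i (a_i^2-b_k^2)}{\prod_{i\neq k}(b_i^2-b_k^2)} \quad (m=2\ell+1), \qquad x_k^2+y_k^2 \;=\; \frac{\prod_i (a_i^2-b_k^2)}{-\,b_k^2\,\prod_{i\neq k}(b_i^2-b_k^2)} \quad (m=2\ell+2).
\]
The strict interlacing pins down the sign of each factor; a parity count (the numbers of negative factors in numerator and denominator differ by an even number in each case, correctly accounting for the extra $-b_k^2$ in the even case) shows the right hand side is strictly positive. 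In the even case, $x_{\ell+1}\neq 0$ follows directly from Lemma~\ref{lemma_Pfaffian} together with $a_{\ell+1}\neq 0$ and $\prod_j b_j\neq 0$.

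With $\scr{S}_{\fa,\fb}$ trivial, Lemma~\ref{lemmaOabls} yields $\widetilde{\mcal{O}}_{\fa,\fb} \simeq \scr{L}_\fb \simeq \mathrm{SO}(2)^\ell \simeq (\mathbb{S}^1)^\ell$. For the combinatorial identification $\mcal{S}_{\fa,\fb}\simeq (\mathbb{S}^1)^\ell$, the strict inequalities $|a_j|>|b_j|$ and $|b_j|>|a_{j+1}|$ force every shared edge of $W_{\fa,\fb}$ to be cut, so $W_m$ splits into single unit squares. A direct enumeration of the bottom points of $W_m$ shows that exactly $\ell$ of these squares are $M_1$-blocks containing a bottom point of $W_m$, each contributing an $\mathbb{S}^1$ factor via~\eqref{equ_fibersfromblocks}, while the remaining squares contribute a single point; hence~\eqref{equ_fibersatkthstageab} gives $(\mathbb{S}^1)^\ell$. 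I expect the principal obstacle to be the sign bookkeeping in the characteristic polynomial step, which is precisely where the strict, non-degenerate interlacing hypothesis is essential.
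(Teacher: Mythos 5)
Your proof takes essentially the same approach as the paper's: both substitute $\xi^2 = -b_k^2$ into the identity between Lemma~\ref{lemma_char} and the spectral conditions~\eqref{equ_detpix}/\eqref{equ_detpix2} to force $x_k^2 + y_k^2 > 0$, then combine this with Lemma~\ref{lemma_levisubgroup} (giving $\scr{L}_{\fb}\simeq\mathrm{SO}(2)^\ell$ in the generic case) and Lemma~\ref{lemmaOabls} to conclude $\widetilde{\mcal{O}}_{\fa,\fb}\simeq (\mathbb{S}^1)^\ell$. Your write-up is somewhat more explicit than the paper's — you display the closed-form expression for $x_k^2+y_k^2$ and carry out the parity count on negative factors, whereas the paper merely asserts positivity after the substitution — but this is the same argument, not a different route.
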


\begin{proof}
By Lemma~\ref{lemma_levisubgroup}, $\scr{L}_j \simeq \mathrm{SO}(2)$ for $j = 1, \dots, r = \ell$. 
So, it remains to show that $\scr{S}_j = \mathrm{SO}(1)$ for $j = 1, \dots, \ell$. 
The characteristic polynomial $\det(\xi I - Z_{(\fa,\fb)}({\bf x}, {\bf y}))$ in Lemma \ref{lemma_char} is written as
\begin{equation}\label{equ_det_even}	
	D(\xi) := \begin{cases} \vspace{0.2cm}
		\xi \left( \prod_{i=1}^\ell (\xi^2 + b_i^2) + \sum_{j=1}^\ell \left( \frac{x_j^2 + y_j^2}{\xi^2 + b^2_j} \cdot \prod_{i=1}^\ell (\xi^2 + b^2_i) \right) \right) &\mbox{if $m = 2 \ell + 1$,}\\
		(\xi^2 + x^2_{\ell+1}) \cdot \prod_{i=1}^\ell (\xi^2 + b^2_i) + \sum_{j=1}^\ell \xi^2 \left( \frac{x_j^2+y_j^2}{\xi^2 + b^2_j} \cdot \prod_{i=1}^\ell (\xi^2 + b^2_i) \right) \quad &\mbox{if $m = 2 \ell + 2$}.
	\end{cases}
\end{equation}
Lemma~\ref{lemma_Pfaffian} determines $x_{\ell+1}$ when $m = 2 \ell + 2$ and $a_{\ell + 1} \neq 0$.
Since $Z_{\fa, \fb}({\bf{x}}, {\bf{y}}) \in \mcal{O}_\fa^{(m)}$,~\eqref{equ_detpix} and~\eqref{equ_detpix2} yield
\begin{equation}\label{equ_det_eventwo}
D(\xi) = 
\begin{cases}	
		\xi \cdot \prod_{i=1}^{\ell} (\xi^2 + a_i^2)   \quad &\mbox{if $m = 2 \ell + 1$}\\
		\prod_{i=1}^{\ell+1} (\xi^2 + a_i^2) \quad &\mbox{if $m = 2 \ell + 2$}.
\end{cases}
\end{equation}
By plugging $\xi^2 = -b^2_i$ for $i = 1, \cdots, \ell$ into~\eqref{equ_det_even} and~\eqref{equ_det_eventwo}, we obtain that $(x_i^2 + y_i^2)$ ($i = 1, \cdots, \ell$) is strictly positive. Therefore, at least one component of $(x_i, y_i)$ must not vanish, which yields $\scr{S}_i = \mathrm{SO}(1)$ for $i = 1, \dots, \ell$. Thus, $\widetilde{\mcal{O}}_{\frak{a}, \frak{b}} \simeq (\mathrm{SO}(2)/\mathrm{SO}(1))^\ell \simeq (\mathbb{S}^1)^\ell$.
Also, observe that $\mcal{S}_{\frak{a}, \frak{b}}$ arises from $\ell$ many unit blocks (containing bottom points of the $W$-block) and hence $\mcal{S}_{\frak{a}, \frak{b}} \simeq (\mathbb{S}^1)^\ell$.
\end{proof}

We turn to the remaining non-generic cases where the elements of $|{\fa}| \cup |{\fb}|$ are \emph{not} pairwise distinct. 
There exists a subsequence in $|{\fa}| \cup |{\fb}|$ whose elements are all equal. We call such a 
subsequence a {\em constant string} in $|{\fa}| \cup |{\fb}|$. 

In \textbf{(Case 1)}, there are eight possible types of constant strings in $\fa \cup \fb$ as follows.
\begin{equation}\label{equ_pattern_odd}
\begin{cases}
(1) \,\, a_s > b_s = a_{s+1} = \cdots = a_\ell = |b_\ell| = 0,  &(1') \,\, b_s > a_{s+1} = \cdots = a_\ell = |b_\ell| = 0, \\
(2) \,\, a_s > b_s = a_{s+1} = \cdots = a_\ell = |b_\ell| > 0,  &(2') \,\, b_s > a_{s+1} = \cdots = a_\ell = |b_\ell| > 0, \\
(3) \,\, a_s > b_s = a_{s+1} = \cdots = a_{s + t} > |b_{s+t}|,  &(3') \,\, b_s > a_{s+1} = \cdots = a_{s + t} > |b_{s+t}|, \\
(4) \,\, a_s > b_s = a_{s+1} = \cdots = b_{s+t} > a_{s + t + 1}, &(4') \,\, b_s > a_{s+1} = \cdots  = b_{s+t} > a_{s + t + 1}, 
\end{cases}
\end{equation}

\begin{lemma}\label{lemma_case_odd} Assume that $m = 2\ell + 1$ and 
let $(x_1, y_1, \cdots, x_\ell, y_\ell)  = \Pi (Z_{\fa, \fb}({\bf x}, {\bf y})) \in \R^{m-1}$ in \eqref{equ_maxtrix_odd}. 
Let $\scr{S}_{\frak{a},\frak{b}}$ be the stabilizer subgroup of $\Pi(Z_{\fa, \fb}({\bf x}, {\bf y}))$.
\begin{enumerate}
	\item Assume that $|{\fa}| \cup |{\fb}|$ contains a constant string of Type $(1)$, $(2)$, or $(4)$ in~\eqref{equ_pattern_odd}. 
	Suppose that the factor $\scr{L}_\tau$ of $\scr{L}_{\frak b}$ corresponding to $b_\bullet$'s in the constant string in Lemma~\ref{lemma_levisubgroup} is $\mathrm{SO}(k)$. 
	Then the factor $\scr{S}_\tau$ of $\scr{S}_{\frak a, \frak b}$ in~\eqref{equ_stabilizergroup} is $\mathrm{SO}(k-1)$ (up to an isomorphism). 	
	\item Assume that $|{\fa}| \cup |{\fb}|$ contains a constant string of the other types in~\eqref{equ_pattern_odd}. 
	Suppose that the factor $\scr{L}_\tau$ of $\scr{L}_{\frak b}$ corresponding to $b_\bullet$'s in the constant string in Lemma~\ref{lemma_levisubgroup} is $\mathrm{SO}(k)$. 
	Then the factor $\scr{S}_\tau$ of $\scr{S}_{\frak a, \frak b}$ in~\eqref{equ_stabilizergroup} is $\mathrm{SO}(k)$. 	
\end{enumerate}
In particular, $\scr{L}_\tau / \scr{S}_\tau$ is diffeomorphic to ${S}(\mcal{D})$ in~\eqref{equ_fibersatkthstageab} where $\mcal{D}$ is the block containing the corresponding constant string.
\end{lemma}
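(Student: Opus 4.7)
The plan is to apply Lemma~\ref{lemmaOabls} to replace the diffeomorphism-type question by a computation of the stabilizer $\scr{S}_{\fa,\fb} \subset \scr{L}_\fb$, and then to use the product decompositions in Lemma~\ref{lemma_levisubgroup} and~\eqref{equ_stabilizergroup} to localize to a single constant string. The factor $\scr{L}_\tau \simeq \mathrm{SO}(k)$ corresponding to a string of common absolute value $c$ acts by its standard representation on the coordinate subspace $\R^k \subset \R^{m-1}$ spanned by the pairs $(x_j,y_j)$ whose $b$-index $j$ runs over the string, and trivially on the complement. Since the linear $\mathrm{SO}(k)$-action on $\R^k$ has stabilizer $\mathrm{SO}(k)$ or $\mathrm{SO}(k-1)$ according as the vector vanishes or not, the crux will be to decide, for each of the eight types in~\eqref{equ_pattern_odd}, whether $\sum_{j \in \mathrm{str}}(x_j^2+y_j^2)$ is forced to vanish by the spectral constraint defining $\widetilde{\mcal{O}}_{\fa,\fb}$.

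For this I would use the polynomial identity from Lemma~\ref{lemma_char} (after cancelling the common factor $\xi$):
\begin{equation*}
\prod_{i=1}^{\ell}(\xi^2 + a_i^2) \;=\; \prod_{i=1}^{\ell}(\xi^2 + b_i^2) \;+\; \sum_{j=1}^{\ell} (x_j^2 + y_j^2) \prod_{i \neq j}(\xi^2 + b_i^2).
\end{equation*}
Let $p$ (resp.\ $q$) denote the number of $b$-indices (resp.\ $a$-indices) inside the string. A direct count from~\eqref{equ_pattern_odd} gives $p = q + 1$ for Types $(1), (2), (4)$ and $p \leq q$ for the remaining five types. The $(\xi^2 + c^2)$-adic valuations on the identity are $q$ on the LHS, $p$ on the first RHS term, $p-1$ on the in-string summand, and $p$ on the out-of-string summand. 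Dividing both sides by $(\xi^2+c^2)^{p-1}$ and evaluating at $\xi^2=-c^2$ kills every term except the in-string summands of the RHS, and kills the LHS as well unless $p = q + 1$, producing the master equation
\begin{equation*}
\Bigl(\sum_{j \in \mathrm{str}}(x_j^2 + y_j^2)\Bigr) \prod_{i \notin \mathrm{str}}(b_i^2 - c^2) \;=\; \begin{cases} \prod_{i \notin \mathrm{str}}(a_i^2 - c^2), & p = q + 1, \\ 0, & p \leq q. \end{cases}
\end{equation*}
Both products $\prod_{i \notin \mathrm{str}}(\,\cdot\, - c^2)$ are nonzero by maximality of the string in the absolute-value interlacing, so the sum is strictly positive in Types $(1), (2), (4)$ (giving $\scr{S}_\tau = \mathrm{SO}(k-1)$ and $\scr{L}_\tau/\scr{S}_\tau = \mathbb{S}^{k-1}$), while in all other types every $(x_j, y_j)$ in the string vanishes and $\scr{S}_\tau = \mathrm{SO}(k)$.

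It then remains to match $\scr{L}_\tau/\scr{S}_\tau$ with the block-combinatorial space $S(\mcal{D})$. In Types $(1), (2), (4)$, the block $\mcal{D}$ has $2p-1$ unit squares forming an $M_p$-block of $W_m$ that contains a bottom point (Types $(1), (2)$ abut the bottom $|b_\ell|$-box; Type $(4)$ is an interior $M_{t+1}$-block whose inner concave corner is a bottom point of $W_m$), so rule~\eqref{equ_fibersfromblocks} returns $S(\mcal{D}) = \mathbb{S}^{2p-1}$, matching $\mathrm{SO}(2p)/\mathrm{SO}(2p-1)$. In the other five types, $\mcal{D}$ is either an $M$-block disjoint from every bottom point or an $N$-block lying above the base $\beta(\gamma_f)$, so~\eqref{equ_fibersfromblocks} returns a point, matching the trivial quotient $\mathrm{SO}(k)/\mathrm{SO}(k)$.

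The hardest part will be this geometric matching: it requires careful bookkeeping of where each constant string sits in the staircase of $W_m$, handling edge effects when the string meets the top $a_1$-box or the diagonal $|b_\ell|$-box, and verifying that every Type $(4)$ interior block indeed abuts a bottom point of $W_m$. The algebraic core, by contrast, is a clean single application of the characteristic polynomial identity and does not need any case-splitting beyond the comparison of $p$ and $q$.
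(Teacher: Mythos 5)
Your proof is correct and follows the same strategy as the paper: reduce to the stabilizer computation via Lemma~\ref{lemmaOabls}, then exploit the characteristic polynomial identity from Lemma~\ref{lemma_char} by cancelling the common power of $(\xi^2+c^2)$ and evaluating at $\xi^2=-c^2$ to detect whether $\sum_{j\in\mathrm{str}}(x_j^2+y_j^2)$ is forced to vanish. The paper carries this out case by case (it writes out Types $(1)$ and $(1')$ and says the others are similar), whereas you package all eight types into the single master equation governed by comparing the number $p$ of $b$-indices to the number $q$ of $a$-indices in the constant string, which is a cleaner and more uniform way to present the same computation; the two observations you rely on (that $p - q \in \{-1,0,1\}$ with $p=q+1$ exactly for Types $(1),(2),(4)$, and that the $(\xi^2+c^2)$-adic valuations are $q$, $p$, $p-1$, $p$ respectively) are easily checked against~\eqref{equ_pattern_odd} and the interlacing $a_1\ge b_1\ge a_2\ge\cdots\ge a_\ell\ge|b_\ell|$. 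One small point worth making explicit, which you elide with "maximality of the string": strict positivity of the sum in Types $(1),(2),(4)$ does not require separately checking the sign of $\prod_{i\notin\mathrm{str}}(b_i^2-c^2)$, since the sum is a priori nonnegative and the master equation forces it to be nonzero once both side products are nonzero. On the geometric matching, your block count is right: a Type $(1),(2),(4)$ constant string occupies $2p-1$ staircase boxes of $W_m$ and is an $M_p$-block containing a bottom point, so $S(\mcal{D})=\mathbb{S}^{2p-1}=\mathrm{SO}(2p)/\mathrm{SO}(2p-1)$; note the paper's sketch writes $M_{\ell-s}$ for Type $(1)$, which appears to be an off-by-one slip for $M_{\ell-s+1}$.
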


\begin{proof}[Sketch of Proof]
In this case, by Lemma~\ref{lemma_char}, we have
	\begin{equation}\label{equ_det_odd_re}
		\displaystyle \prod_{i=1}^{\ell} (\xi^2 + a^2_i)	=  \displaystyle \prod_{i=1}^\ell (\xi^2 + b^2_i) + \sum_{j=1}^\ell \left( \frac{x_j^2 + y_j^2}{\xi^2 + b^2_j} \cdot \displaystyle \prod_{i=1}^\ell (\xi^2 + b^2_i) \right).
	\end{equation}
	
	We shall prove 	Lemma~\ref{lemma_case_odd} (1) for Type $({1})$ (where the other types (2) and (4) can be similarly proven).
Observe that (LHS) and (RHS) of \eqref{equ_det_odd_re} have a common factor $(\xi^2  + a^2_\ell)^{\ell - s} = (\xi^2 +b^2_s)^{\ell - s}$.
Dividing both sides by $(\xi^2 + b^2_s)^{\ell - s}$, we have
	\begin{equation}\label{equ_det_odd_re_1}
			\displaystyle \prod_{i=1}^{s} (\xi^2 + a^2_i) = \displaystyle \prod_{i=1}^{s} (\xi^2 + b^2_i) + \sum_{j=1}^\ell \left( \frac{x_j^2 + y_j^2}{\xi^2 + b^2_j} \cdot \displaystyle \prod_{i=1}^{s} (\xi^2 + b^2_i) \right). 
	\end{equation}
	Therefore, by plugging $\xi^2 = - b^2_s$, we have
	\[
		\prod_{i=1}^s (a^2_i - b_s^2) = \sum_{j=s}^\ell (x_j^2 +y_j^2 ) \cdot \prod_{i=1}^{s-1} (b^2_i - b_s^2) 
	\]
	where $a_i^2 - b_s^2 > 0$ for every $i (\leq s)$ and $b_i^2 - b_s^2 > 0$ for every $i (< s)$.
	Consequently, any element $(x_1, y_1, \cdots, x_\ell, y_\ell) \in \R^{m-1}$ satisfies $\sum_{j=s}^\ell (x_j^2 +y_j^2) > 0$. Thus, Lemma~\ref{lemma_case_odd} (1)  for the Type $({1})$ follows.
	Moreover, $\scr{L}_\tau / \scr{S}_\tau \simeq \mathbb{S}^{2\ell - 2s + 1} \simeq S(\mcal{D})$ where $\mcal{D}$ is the $M_{\ell - s}$-block.
	
For Type $({1}')$, (LHS) of~\eqref{equ_det_odd_re} has a common factor $(\xi^2 + a^2_{s+1})^{\ell - s} = (\xi^2 + b^2_{s+1})^{\ell - s}$, 
while (RHS) of \eqref{equ_det_odd_re} has a common factor $(\xi^2 + b^2_{s+1})^{\ell - s - 1}$.
Dividing both sides by $(\xi^2 + b^2_{s+1})^{\ell - s - 1}$ and plugging in $\xi^2 = - b^2_{s+1}$ into
	\begin{equation}\label{equ_det_odd_re_2}
			\displaystyle \prod_{i=1}^{s+1} (\xi^2 + a^2_i) = \displaystyle \prod_{i=1}^{s+1} ( \xi^2 + b^2_i) + \sum_{j=1}^\ell \left( \frac{x_j^2 + y_j^2}{\xi^2 + b^2_j} \cdot \displaystyle \prod_{i=1}^{s+1} (\xi^2 + b^2_i) \right), 
	\end{equation}
we derive
	\[
		0 = \sum_{j=s+1}^\ell (x_j^2 + y_j^2 ) \cdot \prod_{i=1}^{s} (b^2_i - b^2_{s+1}),  
	\]
	which implies that $\sum_{j=s+1}^\ell (x_j^2 + y_j^2) = 0$.	Moreover, $\scr{L}_\tau / \scr{S}_\tau \simeq \{ \textup{pt} \} \simeq S(\mcal{D})$ where $\mcal{D}$ is the block containing the constant string of Type $({1}')$.
We omit the other cases because the proof is similar.	
\end{proof}   	   

Similarly, in \textbf{(Case 2)}, there are eight possible types of constant strings in $\fa \cup \fb$ as follows.
\begin{equation}\label{equ_pattern_even}
\begin{cases}
(5) \,\, a_s > b_s = a_{s+1} = \cdots = |a_{\ell + 1}| = 0,  &(5') \,\, b_s > a_{s+1} = b_{s+1} = \cdots = |a_{\ell + 1}| = 0,\\
(6) \,\, a_s > b_s = a_{s+1} = \cdots = |a_{\ell + 1}| > 0,  &(6') \,\, b_s > a_{s+1} = b_{s+1} = \cdots = |a_{\ell + 1}| > 0, \\
(7) \,\, a_s > b_s = a_{s+1} = \cdots = a_{s + t} > b_{s+t}, &(7') \,\, b_s > a_{s+1} = \cdots = a_{s + t} > b_{s+t}, \\
(8) \,\, a_s > b_s = a_{s+1} = \cdots = b_{s+t} > |a_{s + t + 1}|,  &(8') \,\, b_s > a_{s+1} = \cdots = b_{s+t} > |a_{s + t + 1}| 
\end{cases}
\end{equation}

\begin{lemma}\label{lemma_case_even} Assume that $n = 2\ell + 2$ and 
let $(x_1, y_1, \cdots, x_\ell, y_\ell, x_{\ell+1})  = \Pi(Z_{\fa, \fb}({\bf x}, {\bf y})) \in \R^{m-1} \}$ in \eqref{equ_maxtrix_even}. 
Let $\scr{S}_{\frak{a},\frak{b}}$ be the stabilizer subgroup of $\Pi(Z_{\fa, \fb}({\bf x}, {\bf y}))$.
\begin{enumerate}
	\item Suppose that $|{\fa}| \cup |{\fb}|$ contains a constant string of Type $(5)$ in~\eqref{equ_pattern_even}. 
	The factor $\scr{L}_\tau$ of $\scr{L}_{\frak b}$ corresponding to $b_\bullet$'s in this constant string is $\mathrm{SO}(2\ell - 2s + 3)$. 
	Then the factor $\scr{S}_\tau$ of $\scr{S}_{\frak a, \frak b}$ in~\eqref{equ_stabilizergroup} is $\mathrm{SO}(2\ell - 2s +2)$  (up to an isomorphism). 	
	\item Suppose that $|{\fa}| \cup |{\fb}|$ contains a constant string of Type $(8)$ in~\eqref{equ_pattern_even}. 
	The factor $\scr{L}_\tau$ of $\scr{L}_{\frak b}$ corresponding to $b_\bullet$'s in this constant string is $\mathrm{SO}(2t + 2)$. 
	Then the factor $\scr{S}_\tau$ of $\scr{S}_{\frak a, \frak b}$ in~\eqref{equ_stabilizergroup} is $\mathrm{SO}(2t+1)$  (up to an isomorphism). 
	\item Assume that $\overline{\fa} \cup \overline{\fb}$ contains a constant string of the other types as in \eqref{equ_maxtrix_odd}.
	Suppose that the factor $\scr{L}_\tau$ of $\scr{L}_{\frak b}$ corresponding to $b_\bullet$'s in the constant string in Lemma~\ref{lemma_levisubgroup} is $\mathrm{SO}(k)$. 
	Then the factor $\scr{S}_\tau$ of $\scr{S}_{\frak a, \frak b}$ in~\eqref{equ_stabilizergroup} is $\mathrm{SO}(k)$. 		
\end{enumerate}
In particular, $\scr{L}_\tau / \scr{S}_\tau$ is diffeomorphic to ${S}(\mcal{D})$ in~\eqref{equ_fibersatkthstageab} where $\mcal{D}$ is the block containing the corresponding constant string.
\end{lemma}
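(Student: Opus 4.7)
The plan is to adapt the strategy of Lemma~\ref{lemma_case_odd} to the even-dimensional case $m=2\ell+2$, using the second formula in Lemma~\ref{lemma_char}. Since $Z_{\fa,\fb}({\bf x},{\bf y})\in\mcal{O}_\fa^{(m)}$ by assumption, the spectral condition~\eqref{equ_detpix2} rewrites as the polynomial identity
\[
\prod_{i=1}^{\ell+1}(\xi^2+a_i^2) \;=\; (\xi^2+x_{\ell+1}^2)\prod_{i=1}^{\ell}(\xi^2+b_i^2) + \sum_{j=1}^{\ell}\xi^2\,(x_j^2+y_j^2)\prod_{i\neq j}(\xi^2+b_i^2).
\]
When $a_{\ell+1}\neq 0$ the interlacing forces $\prod_j b_j>0$, and the Pfaffian condition~\eqref{equ_detpix3} together with Lemma~\ref{lemma_Pfaffian} fixes $x_{\ell+1}$ in terms of $(\fa,\fb)$, so $x_{\ell+1}$ contributes no free coordinate; when $a_{\ell+1}=0$ the tail of $\fb$ also vanishes, making $x_{\ell+1}$ genuinely free. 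By the direct-product decomposition of $\scr{L}_\fb$ in Lemma~\ref{lemma_levisubgroup} together with~\eqref{equ_stabilizergroup} and Lemma~\ref{lemmaOabls}, the task reduces to determining, one constant string at a time, which of the coordinates $(x_1,y_1,\ldots,x_\ell,y_\ell,x_{\ell+1})$ attached to that string must vanish and which must lie on a positive-radius sphere.

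I would first handle Type~(5), which is the only case producing an even-dimensional sphere. Here $b_s=\cdots=b_\ell=a_{s+1}=\cdots=a_{\ell+1}=0$ while $a_s>0$ and $b_{s-1}>0$. After dividing both sides of the identity by their common factor $\xi^{2(\ell-s+1)}$ and evaluating at $\xi=0$, one obtains
\[
\prod_{i=1}^{s}a_i^2 \;=\; \Bigl(x_{\ell+1}^2+\sum_{j=s}^{\ell}(x_j^2+y_j^2)\Bigr)\prod_{i=1}^{s-1}b_i^2,
\]
whose left side is strictly positive and whose right-hand factor $\prod_{i<s}b_i^2$ is strictly positive by interlacing. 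Therefore the $2\ell-2s+3$ coordinates $(x_s,y_s,\ldots,x_\ell,y_\ell,x_{\ell+1})$ are pinned to a sphere of fixed positive radius in $\R^{2\ell-2s+3}$. Since $\scr{L}_\tau=\mathrm{SO}(2\ell-2s+3)$ acts on these coordinates by the standard representation, the stabilizer is $\mathrm{SO}(2\ell-2s+2)$ and the orbit is $\mathbb{S}^{2\ell-2s+2}$, matching the $N$-block contribution in~\eqref{equ_fibersfromblocks}.

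For Type~(8), where $\beta:=b_s=a_{s+1}=\cdots=b_{s+t}>0$ and the $b$-indices in the string are $s,\ldots,s+t$, I would extract the common factor $(\xi^2+\beta^2)^{t}$ from both sides (using that $a_i>\beta$ for $i\leq s$, $|a_i|<\beta$ for $i\geq s+t+1$, $b_i>\beta$ for $i<s$, and $b_i<\beta$ for $s+t<i\leq\ell$, all by interlacing) and then evaluate at $\xi^2=-\beta^2$. The first term of the right side and the sum over $j\notin\{s,\ldots,s+t\}$ both vanish, and a sign-tracking argument---paralleling Type~$(4)$ of Lemma~\ref{lemma_case_odd}---forces $\sum_{j=s}^{s+t}(x_j^2+y_j^2)>0$; hence $\scr{L}_\tau=\mathrm{SO}(2t+2)$ has stabilizer $\mathrm{SO}(2t+1)$ and orbit $\mathbb{S}^{2t+1}$, matching the $M$-block. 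For all remaining types (6), (7), (5'), (6'), (7'), (8'), the same polynomial manipulation---but with an extra factor of $(\xi^2+\beta^2)$ or of $\xi^2$ appearing on the left rather than the right, dictated either by the $a$-first interlacing or by the extra vanishing of $a_{\ell+1}$---forces the analogous sum of squares to equal zero, exactly as in Type~$(1')$ of Lemma~\ref{lemma_case_odd}. All coordinates attached to the string then vanish, $\scr{S}_\tau=\scr{L}_\tau$, and the orbit is a point, again matching the block assignment.

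The main obstacle is Type~(5), in which the unpaired Pfaffian coordinate $x_{\ell+1}$ joins the $2(\ell-s+1)$ paired coordinates to produce an odd total $2\ell-2s+3$, and hence an \emph{even}-dimensional spherical orbit. The delicate points are to verify that the Pfaffian condition~\eqref{equ_detpix3} truly imposes no additional constraint on $x_{\ell+1}$ (which happens precisely because $\prod_j b_j=0$ when the tail of $\fb$ vanishes) and to match the resulting $\mathbb{S}^{2\ell-2s+2}$ with the $N$-block contribution assigned in Section~\ref{secTheTopologyOfGelfandCetlinFibers}---a phenomenon with no counterpart in the $\mathrm{U}(n)$ case and precisely the one anticipated in Remark~\ref{rmk_fibertypeseven}.
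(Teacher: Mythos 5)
Your proof is correct and takes essentially the same approach as the paper's sketch: rewrite the spectral condition via Lemma~\ref{lemma_char}, divide the characteristic-polynomial identity by the common power of $\xi$ or of $(\xi^2+\beta^2)$, evaluate at the appropriate value, and use the interlacing inequalities to control the signs of the remaining factors and thereby pin down whether $\sum (x_j^2+y_j^2)$ (plus $x_{\ell+1}^2$ for Type~$(5)$) is forced to be positive or zero. The only organizational difference is which cases are worked out explicitly: the paper treats Types~$(5)$ and $(6)$ and relegates the rest, whereas you treat the two sphere-producing types $(5)$ and $(8)$ and group the remaining point-producing types; both choices are reasonable. One small caution about your preamble: the blanket claim that ``$a_{\ell+1}=0$ forces the tail of $\fb$ to vanish'' is not true in general (one can have $a_{\ell+1}=0$ with $b_\ell>0$, and then Lemma~\ref{lemma_Pfaffian} still applies and simply forces $x_{\ell+1}=0$); it only holds within a constant string of Type~$(5)$, which is exactly where you actually use it, so the argument itself is unaffected.
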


\begin{proof}[Sketch of Proof]
	We only give the proof for type $(5)$ and $(6)$. (The other types can be similarly proven.)
In this case, 
	\begin{equation}\label{equ_det_even_re}
		\prod_{i=1}^{\ell+1} (\xi^2 + a^2_i)  = (\xi^2 + x_{\ell+1}^2) \cdot \prod_{i=1}^\ell (\xi^2 + b^2_i) + \sum_{j=1}^\ell \xi^2 \left( \frac{x^2_j + y_j^2}{\xi^2 + b^2_j} \cdot \prod_{i=1}^\ell (\xi^2 + b^2_i) \right) 
	\end{equation}
	In type $(5)$, notice that Lemma~\ref{lemma_Pfaffian} can \emph{not} be applied because $b_\ell = 0$. 
	Since (LHS) and (RHS) of~\eqref{equ_det_even_re} 
	have a common factor $\xi^{2\ell - 2s + 2}$. Dividing both sides of \eqref{equ_det_even_re} by $\xi^{2\ell - 2s + 2}$, we get 
	\begin{equation}\label{equ_det_even_re_re}
		\prod_{i=1}^{s} (\xi^2 + a^2_i) = (\xi^2 + x_{\ell+1}^2) \cdot \prod_{i=1}^{s-1} (\xi^2 + b^2_i) + \sum_{j=1}^\ell \xi^2 \left( \frac{x_j^2 + y_j^2}{\xi^2 + b^2_j} \cdot \prod_{i=1}^{s-1} (\xi^2 + b^2_i) \right). 
	\end{equation}	
	Letting $\xi = 0$, we obtain
	$$
		\sum_{j=s}^\ell (x_j^2 + y_j^2) + x_{\ell+1}^2 = \frac{\prod_{i=1}^s a^2_i}{\prod_{i=1}^{s-1} b^2_i} > 0
	$$
	as desired.
	It yields that $\scr{S}_\tau \simeq \mathrm{SO}(2\ell - 2s +2)$. 
	Therefore, $\scr{L}_\tau / \scr{S}_\tau \simeq \mathbb{S}^{2\ell - 2s + 2} \simeq S(\mcal{D})$ where $\mcal{D}$ is the $N_{\ell - s +1}$-block.

 	In Type $(6)$, applying Lemma~\ref{lemma_Pfaffian}, $x_{\ell+1}$ is determined uniquely. 
Observe that (LHS) and (RHS) of~\eqref{equ_det_even_re} have a common factor $(\xi^2 + a^2_{s+1})^{\ell - s} = (\xi^2 + b^2_{s})^{\ell - s}$. Plugging $\xi^2 = - a^2_{s+1} = - b^2_s$ into
	\begin{equation}\label{equ_det_even_re_re6}
		\prod_{i=1}^{s+1} (\xi^2 + a^2_i) = (\xi^2 + x_{\ell+1}^2) \cdot \prod_{i=1}^{s} (\xi^2 + b^2_i) + \sum_{j=1}^\ell \xi^2 \left( \frac{x_j^2 + y_j^2}{\xi^2 + b^2_j} \cdot \prod_{i=1}^{s} (\xi^2 + b^2_i) \right),
	\end{equation}	
	we deduce
	$$
		0 = b^2_s \cdot  \prod_{i=1}^{s-1} (b^2_i - b^2_{s}) \cdot \sum_{j=s}^\ell (x_j^2 + y_j^2)  
	$$
	and therefore $\sum_{j=s}^\ell (x_j^2 + y_j^2) = 0$. Thus, $\scr{S}_\tau \simeq \mathrm{SO}(2\ell - 2s +3)$.
	Thus, $\scr{L}_\tau / \scr{S}_\tau \simeq \{ \textup{pt} \} \simeq S(\mcal{D})$ where $\mcal{D}$ is the block containing the constant string Type $(6)$.
	Even though $\mcal{D}$ is an $N$-block, it is \emph{above} the base line and hence $S(\mcal{D})$ is a point. 
\end{proof}

Now we are ready to prove Lemma~\ref{lemma_descriptionofsab} for the case where $(\frak{a}, \frak{b})$ contains constant strings. 

\begin{proof}[Proof of Lemma~\ref{lemma_descriptionofsab}]
By Lemma~\ref{lemmaOabls}, $\widetilde{\mcal{O}}_{\frak{a}, \frak{b}}$ is diffeomorphic to $\scr{L}_{\frak{b}} / \scr{S}_{\frak{a},\frak{b}}$.  
Moreover, Lemma~\ref{lemma_case0},~\ref{lemma_case_odd},~\ref{lemma_case_even}, and~\eqref{equ_fibersatkthstageab} yield that 
\begin{equation}\label{equation_prod_sphere}
	\widetilde{\mcal{O}}_{\frak{a}, \frak{b}} \simeq \prod_{j=1}^r (\scr{L}_j / \scr{S}_j) \simeq \prod_\mcal{D} S (\mcal{D}) \simeq S_{\frak{a}, \frak{b}}.
\end{equation}
In sum, $\widetilde{\mcal{O}}_{\frak{a}, \frak{b}}$ is diffeomorphic to $S_{\frak{a}, \frak{b}}$. 
\end{proof}

\subsection{Diffeomorphism types of Gelfand--Cetlin fibers}\label{sec_gcfiberdiffeo}

To finish the proof of Theorem~\ref{thm_fiber}, it remains to prove the following statement. 

\begin{proposition}\label{proposition_twofibersarediffeo}
For any two points $\bf{u}$ and $\bf{u}^\prime$ in the relative interior of a single face $f$, the GC fibers over $\bf{u}$ and $\bf{u}^\prime$ are diffeomorphic.
\end{proposition}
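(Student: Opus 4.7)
The plan is to use the iterated bundle description from Proposition~\ref{proposition_mainiterated} and verify that the diffeomorphism type of the entire iterated structure depends only on the face $f$, not on the specific point chosen in its relative interior. Since the relative interior of $f$ is convex, I join ${\bf u}$ and ${\bf u}^\prime$ by the straight-line path ${\bf u}(t) := (1-t){\bf u} + t{\bf u}^\prime$ with $t \in [0,1]$, which stays entirely in the relative interior, and analyze the family of GC fibers over this path.

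The first key point is that all combinatorial data entering the iterated bundle is constant along the path. For each $t \in [0,1]$ and each $m \in \{2, \ldots, n\}$, let $\fa^{(m)}(t)$ be the string defined from ${\bf u}(t)$ as in~\eqref{equ_positionstrings}. Because ${\bf u}(t)$ stays in the relative interior of $f$, Theorem~\ref{thm_diacorrespo} implies that the set of equalities among the entries $u_{i,j}(t)$ (encoded by $\gamma_f$) together with the sign conditions on diagonal entries (encoded by $\delta_f$) are independent of $t$. Consequently, for every $m$, the partition of $|\fa^{(m)}(t)| \cup |\fa^{(m-1)}(t)|$ into constant strings and the distribution of zeros are the same for all $t$. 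By Lemma~\ref{lemma_levisubgroup} together with Lemmas~\ref{lemma_case0}, \ref{lemma_case_odd}, and \ref{lemma_case_even}, the subgroups $\scr{L}_{\fa^{(m-1)}(t)}$ and $\scr{S}_{\fa^{(m)}(t),\fa^{(m-1)}(t)}$ have fixed conjugacy types throughout, so both the base $\mcal{O}_{\fa^{(m-1)}(t)}^{(m-1)}$ and the stage fiber $\widetilde{\mcal{O}}_{\fa^{(m)}(t), \fa^{(m-1)}(t)}$ remain within fixed diffeomorphism types.

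To upgrade pointwise invariance to a diffeomorphism of total spaces, I proceed by induction on $m$ through the stages of~\eqref{figure_iterated_bundle}. At stage $m$, the family $\{\mcal{O}_{\fa^{(m)}(t), \fa^{(m-1)}(t)}\}_{t \in [0,1]}$ consists of $\mathrm{SO}(m-1)$-orbits inside $\mcal{O}^{(m)}_{\fa^{(m)}(t)}$ sharing a common isotropy type by the previous paragraph. By the slice theorem for compact Lie group actions, the union of these orbits over $t$ is a smooth manifold and its projection to $[0,1]$ is a proper submersion; Ehresmann's fibration theorem then yields a smooth trivialization producing an equivariant diffeomorphism between the fibers at $t=0$ and $t=1$. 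Because each map $\rho_{\fa^{(m)}(t), \fa^{(m-1)}(t)}$ is $\mathrm{SO}(m-1)$-equivariant by Lemma~\ref{lemma_equivariant_map}, this trivialization is compatible with the projection down to the previous stage. Assembling these compatible trivializations across all stages gives the desired diffeomorphism $(\Phi_\lambda^{(n)})^{-1}({\bf u}) \cong (\Phi_\lambda^{(n)})^{-1}({\bf u}^\prime)$.

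The main obstacle is justifying the smoothness and local triviality of the family of orbits as $t$ varies, together with the compatibility of the resulting trivializations across stages. The cleanest route is via the slice theorem: constancy of isotropy type implies the union of orbits is identifiable with a quotient of $\mathrm{SO}(m-1) \times [0,1]$ by the common stabilizer acting diagonally, which simultaneously supplies a smooth structure, an equivariant structure, and compatibility with the downstairs projections $\rho_{\fa^{(m)}, \fa^{(m-1)}}$ required for the inductive step.
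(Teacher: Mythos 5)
Your proposal follows the same broad strategy as the paper: interpolate along the segment $\fa_t := (1-t)\fa + t\fa'$, observe that because the path stays in the relative interior of $f$ all inequality patterns (hence stabilizer types) are constant, and deduce that the iterated bundle total spaces at $t=0$ and $t=1$ are diffeomorphic. Where the paper works explicitly and you work abstractly, two genuine gaps appear, both of which the paper goes to some length to close.

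First, the smoothness of the family $\bigcup_t \mcal{O}_{\fa^{(m)}(t), \fa^{(m-1)}(t)} \times \{t\}$ (and the identification with $\bigl(\mathrm{SO}(m-1)/H\bigr) \times [0,1]$ for a fixed $H$) requires more than constancy of the \emph{conjugacy class} of stabilizers: you need a smooth section of base points $A_t$ with \emph{literally the same} stabilizer in $\mathrm{SO}(m-1)$ for all $t$. The slice theorem alone does not produce such a section; it gives local models around a single orbit, and a point of the nearby orbit in the slice need not have stabilizer exactly $H$. The paper manufactures exactly the needed family explicitly in \eqref{equation_CCC}, choosing $A_t$ so that its $\Pi$-image is a vector of the form $(\sqrt{C_1(t)},0,\dots)$, which visibly has stabilizer independent of $t$; this is the content of Lemmas~\ref{lemma_fa} and~\ref{lemma_fb}. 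Without this construction, your "identifiable with a quotient of $\mathrm{SO}(m-1)\times[0,1]$" is an assertion, not a proof.

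Second, and more seriously, the claim that equivariance of $\rho_{\fa^{(m)}(t),\fa^{(m-1)}(t)}$ makes the stage-$m$ trivialization "compatible with the projection down to the previous stage" does not hold up. The iterated bundle is a pullback along the inclusions $\iota_{\fa^{(m-1)},\fa^{(m-2)}} \colon \mcal{O}_{\fa^{(m-1)},\fa^{(m-2)}} \hookrightarrow \mcal{O}^{(m-1)}_{\fa^{(m-1)}}$. A trivialization of the stage-$m$ family is $\mathrm{SO}(m-1)$-equivariant; the stage-$(m-1)$ family sits as an $\mathrm{SO}(m-2)$-equivariant submanifold of the base, and there is no a priori reason the two trivializations cohere along $\iota$. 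This is precisely why the paper introduces $\zeta_{\fa,\fb} = (\eta^m)^{-1}\circ\iota_{\fa',\fb'}\circ\eta^m_{m-1}$, proves it is \emph{isotopic} to $\iota_{\fa,\fb}$ (Lemma~\ref{lemma_isotopy}), and then passes through the intermediate iterated bundle $F_n$ (Lemma~\ref{lemma_doesnotaffectop}) before concluding $E_n\cong F_n\cong E_n'$. You can avoid the isotopy by instead building the \emph{entire} family $\bigcup_t E_n(t)\times\{t\}$ as a smooth fiber bundle over $[0,1]$ and applying Ehresmann once at the top level, but that again requires the smooth section of base points from the first point, plus a stage-by-stage verification that the pullbacks fit into smooth families; "assembling compatible trivializations" of individual stages, as your text describes, is not by itself sound.

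Both gaps are fillable, and your intuition about where the difficulty lies is correct. But the filling is essentially the content of the paper's Lemmas~\ref{lemma_fa}--\ref{lemma_doesnotaffectop}, so at the level of detail given the proposal does not yet constitute a proof.
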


The positions of $\bf{u}$ and $\bf{u}^\prime$ determine two sequences 
$(\fa^{(2)}, \dots, \fa^{(n)})$ and $(\fa^{\prime \, (2)}, \dots, \fa^{\prime \, (n)})$
of strings as in~\eqref{equ_positionstrings}. 
For notational simplicity, we put $\frak{a}^{\vphantom{\prime}} := \frak{a}^{(m)}$ and $\frak{a}^\prime := \frak{a}^{\prime \, (m)}$ ($2 \leq m \leq n$). 
We can define an $\rmso(m)$-equivariant diffeomorphism $\eta^{(m)} \colon \mcal{O}_{\fa^{\vphantom{\prime}}}^{(m)} \to \mcal{O}_{\fa^\prime}^{(m)}$ such that $I_{\fa^{\vphantom{\prime}}}^{(m)} \mapsto I_{\fa^\prime}^{(m)}$.
It is well-defined since $\mathrm{SO}(m)$ acts transitively on both $\mcal{O}^{(m)}_{\frak{a}^{\vphantom{\prime}}}$ and $\mcal{O}^{(m)}_{\frak{a}^\prime}$ (by conjugation)
and the stabilizer of $I_{\fa}^{(m)} \in \mcal{O}_{\fa}^{(m)}$ is same as that of $I_{\fa'}^{(m)} \in \mcal{O}_{\fa'}^{(m)}$. 
More generally we have the following.

\begin{lemma}\label{lemma_fa}
	Let $\fa_{t} := (1-t)\fa + t \fa^\prime$ for $t \in [0,1]$, which parametrizes the line segment from $\fa$ to $\fa^\prime$.
	Then there exists a parameter family of $\rmso(m)$-equivariant diffeomorphisms
	\begin{equation}\label{equation_eta}
		\eta^{m}_{t} \colon \mcal{O}_{\fa_{\vphantom{t}}}^{(m)} \rightarrow \mcal{O}_{\fa_{t}}^{(m)}
	\end{equation}
	such that $\eta^{m}_{t}(I_{\fa}^{(m)}) = I_{\fa_{t}}^{(m)}$.
\end{lemma}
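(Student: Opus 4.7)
The plan is to exhibit each orbit $\mcal{O}_{\fa_t}^{(m)}$ as the homogeneous space $\mathrm{SO}(m)/H_m$ for a single closed subgroup $H_m \subseteq \mathrm{SO}(m)$ that is independent of $t \in [0,1]$, and then to transport the base point across the family. First I would verify that the isotropy subgroup
\[
H_{\fa_t}^{(m)} := \{ Q \in \mathrm{SO}(m) \mid Q^{T} \cdot I_{\fa_t}^{(m)} \cdot Q = I_{\fa_t}^{(m)} \}
\]
does not depend on $t$. This is the crux of the argument. Because conjugation preserves the block structure of $I_{\fa_t}^{(m)}$, the isotropy is determined by three pieces of data: (i) the multiset of equalities among the entries of $|\fa_t|$ in the sense of \eqref{equ_fafab||}, (ii) which entries of $\fa_t$ vanish, and (iii) when $m = 2\ell+2$, the sign of the last component of $\fa_t$. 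Each of these corresponds to the saturation of a specific inequality from Theorem \ref{theorem_GCpolytopeineq}, equivalently to an edge/region datum of the pair $(\gamma_f, \delta_f)$ attached to $f$ under Theorem \ref{thm_diacorrespo}. Since $\mathbf{u}$ and $\mathbf{u}'$ lie in the relative interior of the same face $f$, so does their convex combination, and the $m$-th string restricts to $\fa_t \in \frak{t}_+^*(m)$ by convexity of the positive Weyl chamber; hence the three combinatorial invariants are constant in $t$, so $H_{\fa_t}^{(m)} = H_m$ for a single fixed $H_m$.

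Granting this, I would define
\[
\eta^m_t \colon \mcal{O}_\fa^{(m)} \to \mcal{O}_{\fa_t}^{(m)}, \qquad Q^{T} \cdot I_\fa^{(m)} \cdot Q \longmapsto Q^{T} \cdot I_{\fa_t}^{(m)} \cdot Q.
\]
The map is well-defined because any ambiguity $Q_2 Q_1^{-1} \in H_m$ in representing a point of $\mcal{O}_\fa^{(m)}$ also stabilizes $I_{\fa_t}^{(m)}$; it is $\mathrm{SO}(m)$-equivariant by construction and sends $I_\fa^{(m)}$ to $I_{\fa_t}^{(m)}$. Its inverse is constructed analogously from the same $H_m$, making it a diffeomorphism. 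Smoothness of the family $t \mapsto \eta^m_t$ is immediate from the identification $\mcal{O}_{\fa_t}^{(m)} \cong \mathrm{SO}(m)/H_m$, since under this identification $\eta^m_t$ is literally the identity on cosets while the embedding into $\mcal{S}_m$ varies smoothly via $t \mapsto I_{\fa_t}^{(m)}$.

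The main obstacle will be the invariance-of-stabilizers step. Translating the face-structure language of Section \ref{secTheFaceStructureOfGelfandCetlinPolytopes} into the isotropy combinatorics for $I_{\fa_t}^{(m)}$ in $\mathrm{SO}(m)$ requires one to inspect each type of edge and coastline configuration in $(\gamma_f, \delta_f)$ and match it with the corresponding equality in Theorem \ref{theorem_GCpolytopeineq}; this is bookkeeping but goes through cleanly because $(\gamma_f, \delta_f)$ is fixed once the face $f$ is. Once constancy of $H_m$ is established, the remainder of the proof is a standard homogeneous-space construction.
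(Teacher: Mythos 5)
Your approach is the same as the paper's: observe that the segment $\fa_t$ stays in the relative interior of the face $f$ (by convexity of the face), deduce that the stabilizer of $I_{\fa_t}^{(m)}$ in $\mathrm{SO}(m)$ is independent of $t$, and transport the base point across the fixed homogeneous model $\mathrm{SO}(m)/H_m$. Your explicit formula for $\eta^m_t$ and the smoothness-in-$t$ argument via the homogeneous-space identification spell out exactly what the paper's terse proof leaves to the reader.

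The one step that is not quite right is your assertion that all three invariants (i), (ii), (iii) are constant along the segment. When $m = 2\ell+2$ and $a_\ell > |a_{\ell+1}|$ holds strictly on $f$ (so none of the inequalities tying the diagonal variable $u_{\ell+1,\ell+1}$ to its neighbors is saturated), the face imposes neither a sign nor a vanishing constraint on $a_{\ell+1}=u_{\ell+1,\ell+1}$, and $a_{\ell+1}(t)$ can cross zero and change sign along the interpolation — so both (ii) and (iii) fail as stated. The argument survives because in precisely this regime $B(a_{\ell+1})$ occupies a $2\times 2$ block of its own, and its contribution to the $\mathrm{SO}(m)$-stabilizer is $\mathrm{SO}(2)$ for every value of $a_{\ell+1}$ including $0$: for even $m$ the determinant-one condition already rules out a reflection on that residual block, so the isotropy is insensitive to (ii) and (iii). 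Conversely, (ii) and (iii) genuinely change the stabilizer only when $|a_{\ell+1}|$ ties with $a_\ell$, and exactly then the tie is forced by saturated inequalities $u_{\ell,\ell+1} = \pm u_{\ell+1,\ell+1}$, which fix the sign (and the vanishing pattern) across all of $f$. So $H_{\fa_t}^{(m)} = H_m$ does hold, but the proof needs this dichotomy — either the invariants are constant, or the isotropy is insensitive to the ones that vary — rather than unconditional constancy of (ii) and (iii).
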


\begin{proof}
	Since the linear interpolation $\{\fa_{t}\}$ of the points $\bf{u}^{\vphantom{\prime}}$ and $\bf{u}^\prime$ lies in the relative interior of the same face $f$, the inequality pattern does \emph{not} change as $t \in [0,1]$ changes. 
	In particular, the stabilizer of $I_{\fa_t}^{(m)}$ coincides with that of $I_{\fa}^{(m)}$. 
	It finishes the proof.
\end{proof}

For a fixed $m \geq 3$, let $(\frak{a}, \frak{b}) = (\fa^{(m)}, \fa^{(m-1)})$ and $(\frak{a}^\prime, \frak{b}^\prime) = (\fa^{\prime\, (m)}, \fa^{\prime\, (m-1)})$. 
We construct an $\rmso(m-1)$-equivariant diffeomorphism $\eta_{m-1}^{m} \colon \mcal{O}_{\fa^{\vphantom{\prime}}, \fb^{\vphantom{\prime}}} \rightarrow \mcal{O}_{\fa^\prime, \fb^\prime}$
as follows. By our choice of ${\bf u}$ and ${\bf u}'$, the pairs of strings $(\fa, \fb)$ and $(\fa', \fb')$  have the same inequality pattern. 
Also note that $\rmso(m-1)$, as a subgroup of $\rmso(m)$ 
(with respect to the embedding \eqref{equation_embedding}), acts on both $\mcal{O}_{\fa, \fb}$ and $\mcal{O}_{\fa', \fb'}$. 
The actions are transitive by Lemma \ref{lemma_transitive}. 
So, if there are elements $A \in \mcal{O}_{\fa^{\vphantom{\prime}}, \fb^{\vphantom{\prime}}}$ and $A^\prime \in \mcal{O}_{\fa^\prime, \fb^\prime}$ having the same stabilizer in $\rmso(m-1)$, then we obtain a unique $\rmso(m-1)$-equivariant 
diffeomorphism from $\mcal{O}_{\fa^{\vphantom{\prime}}, \fb^{\vphantom{\prime}}}$ to $\mcal{O}_{\fa^\prime, \fb^\prime}$ sending $A$ to $A^\prime$. 

To find such elements $A$ and $A'$, recall that the fiber  $\widetilde{\mcal{O}}_{\fa, \fb}$ of $\rho_{\fa, \fb} \colon \mcal{O}_{\fa, \fb} \to \mcal{O}_{\fb}^{(m-1)}$ at $I_{\fb}^{(m-1)}$ is the subset of 
$\mcal{O}_{\fa, \fb}$ consisting of elements of the form \eqref{equ_maxtrix_odd} or \eqref{equ_maxtrix_even} and 
an $\scr{L}_{\fb}$-homogeneous space  by Corollary \ref{cor_fiberhomegenous}. Here, $\scr{L}_{\fb} \subset \rmso(m-1)$ 
is the stabilizer of $I_{\fb}^{(m-1)}$ and is isomorphic to the product of $\rmso$-subgroups $\scr{L}_1, \cdots, \scr{L}_r$ of $\rmso(m-1)$ by Lemma \ref{lemma_levisubgroup}. 
Furthermore, Lemma \ref{lemma_levisubgroup}, \ref{lemma_case0}, \ref{lemma_case_odd}, and \ref{lemma_case_even}
imply that $\widetilde{\mcal{O}}_{\fa, \fb}$ is $\scr{L}_{\fb}$-equivariantly diffeomorphic to $\Pi(\widetilde{\mcal{O}}_{\fa', \fb'})$, which is a product of spheres of the following form$\colon$
\begin{equation}\label{equation_C}
	\begin{cases}
	\left\{ {\bf{z}} \in \R^{m-1} \mid \sum_{i=1}^{2\ell_1} z_i^2 = C_1, \cdots, \sum_{i=2\ell_{r-1}+1}^{m-2} z_i^2 = C_r, z_{m-1} = C_{r+1}  \right\} &\mbox{if $m = 2\ell + 2, b_\ell \neq 0$} \\
	\left\{ {\bf{z}} \in \R^{m-1} \mid \sum_{i=1}^{2\ell_1} z_i^2 = C_1, \cdots, \sum_{i=2\ell_{r-1}+1}^{m-1} z_i^2 = C_r   \right\} &\mbox{otherwise.}
	\end{cases}
\end{equation}
where $\Pi$ is the embedding of $\widetilde{\mcal{O}}_{\fa, \fb}$ into $\R^{m-1}$
given in \eqref{equ_projectionoadr} for some real numbers $C_1, \cdots, C_r \in \R^r_{\geq 0}$, $C_{r+1} \in \R_{\neq 0}$ uniquely determined by $(\fa, \fb)$.
Each $\scr{L}_i$-factor of $\scr{L}_{\fb}$ acts on $\Pi(\widetilde{\mcal{O}}_{\fa, \fb})$
as the standard linear action on the $i$-th sphere. 
Similarly for $(\fa^\prime, \fb^\prime)$, $\widetilde{\mcal{O}}_{\fa^\prime, \fb^\prime}$ is diffeomorphic to a product of spheres with radii $C_\bullet^\prime$ as in~\eqref{equation_C}. 
Note that  
$C_i = 0$ if and only if $C_i^\prime = 0$ as $(\fa^{\vphantom{\prime}}, \fb^{\vphantom{\prime}})$ and $(\fa^\prime, \fb^\prime)$ have the same inequality pattern (and hence $\scr{L}_{\fb} = \scr{L}_{\fb^\prime}$).

Let $A$ be the element of $\widetilde{\mcal{O}}_{\fa, \fb}$ whose image under $\Pi$ is 
\begin{equation}\label{equation_CCC}
{\bf z} := 
\begin{cases}
(\underbrace{\sqrt{C_1},0,\cdots,0}_{2\ell_1}, \underbrace{\sqrt{C_2},0,\cdots,0}_{2\ell_2-2\ell_1}, \cdots, \underbrace{\sqrt{C_r}, 0, \cdots 0}_{m-2 - 2\ell_{r-1}}, C_{r+1}) &\mbox{if $m = 2\ell + 2, b_\ell \neq 0$} \\
(\underbrace{\sqrt{C_1},0,\cdots,0}_{2\ell_1}, \underbrace{\sqrt{C_2},0,\cdots,0}_{2\ell_2-2\ell_1}, \cdots, \underbrace{\sqrt{C_r}, 0, \cdots 0}_{m-2 - 2\ell_{r-1}}) &\mbox{otherwise.}
\end{cases}
\end{equation}
Similarly, one can choose $A' \in \widetilde{\mcal{O}}_{\fa', \fb'}$. 
Clearly $A$ and $A'$ have the same stabilizer in $\scr{L}_{\fb^{\vphantom{\prime}}} = \scr{L}_{\fb'}$ (and hence in $\rmso(m-1)$). 
Define $\eta_{m-1}^{m}$ by letting $\eta_{m-1}^{m}(A) = A'$.
Moreover, $\eta^{m}_{m-1}$ can be generalized as follows. 

\begin{lemma}\label{lemma_fb}
	Let $\fa^{\vphantom{\prime}}_t := (1-t)\fa^{\vphantom{\prime}} + t \fa'$ and $\fb^{\vphantom{\prime}}_t := (1-t)\fb^{\vphantom{\prime}} + t \fb'$ for $t \in [0,1]$ so that $(\fa_0, \fb_0) = (\fa, \fb)$ and $(\fa_1, \fb_1) = (\fa', \fb')$. 
	Then there exists a parameter family of $\rmso(m-1)$-equivariant 
	diffeomorphisms
	\begin{equation}\label{equation_eta_pair}
		\eta^{m}_{m-1, t} \colon \mcal{O}_{\fa, \fb} \rightarrow \mcal{O}_{\fa_t, \fb_t}, \quad \quad \eta^{m}_{m-1,t}(A) = A_t	 
	\end{equation}
	where $A_t \in \widetilde{\mcal{O}}_{\fa_t, \fb_t}$ is taken as in \eqref{equation_CCC}.
\end{lemma}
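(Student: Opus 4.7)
The plan is to extend the construction of $\eta^{m}_{m-1}$ from the endpoints $t = 0, 1$ to every $t \in [0,1]$ by observing that the combinatorial data stays constant along the segment. Because $(1-t){\bf u} + t {\bf u}'$ lies in the relative interior of the same face $f$ of $\Delta_\lambda^{(n)}$, the inequality pattern in~\eqref{equ_givenfrakab} for $(\fa_t, \fb_t)$ is independent of $t$. Hence, by Lemma~\ref{lemma_levisubgroup}, the stabilizer $\scr{L}_{\fb_t} \subset \rmso(m-1)$ of $I_{\fb_t}^{(m-1)}$ coincides with $\scr{L}_{\fb}$ as a subgroup of $\rmso(m-1)$ for every $t$, and by Lemma~\ref{lemma_case0},~\ref{lemma_case_odd}, and~\ref{lemma_case_even}, the stabilizer $\scr{S}_{\fa_t, \fb_t}$ of the distinguished point $A_t$ inside $\scr{L}_{\fb_t}$ is also the same subgroup for every $t$.

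Next, I would define $\eta^{m}_{m-1, t}$ by first declaring $\eta^{m}_{m-1, t}(A) := A_t$, where $A_t \in \widetilde{\mcal{O}}_{\fa_t, \fb_t}$ is the element whose image under the embedding $\Pi$ in~\eqref{equ_projectionoadr} is the vector~\eqref{equation_CCC} with the radii $C_\bullet(t)$ uniquely determined by $(\fa_t, \fb_t)$. These radii depend smoothly on $t$ (they are obtained by plugging $\xi^2 = -b_i^2(t)$ into the characteristic polynomial identity from Lemma~\ref{lemma_char}, yielding rational expressions in the components of $(\fa_t, \fb_t)$), and their vanishing pattern is $t$-independent by the preservation of the inequality pattern. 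By the transitivity of the $\rmso(m-1)$-action on $\mcal{O}_{\fa, \fb}$ established in Lemma~\ref{lemma_transitive}, every point of $\mcal{O}_{\fa, \fb}$ is of the form $g \cdot A$ for some $g \in \rmso(m-1)$, and I extend equivariantly by
\[
	\eta^{m}_{m-1, t}(g \cdot A) := g \cdot A_t.
\]
Well-definedness is immediate from the common stabilizer $\scr{S}_{\fa, \fb} = \scr{S}_{\fa_t, \fb_t}$, and $\rmso(m-1)$-equivariance is built-in.

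Finally, I would verify that each $\eta^{m}_{m-1, t}$ is a diffeomorphism and that the family is smooth in $t$. Since both source and target are orbits of $\rmso(m-1)$ through points with the same isotropy, each $\eta^{m}_{m-1, t}$ factors through the canonical diffeomorphism $\rmso(m-1)/\scr{S}_{\fa, \fb} \to \rmso(m-1)/\scr{S}_{\fa_t, \fb_t}$ composed with the two orbit maps, so it is a diffeomorphism onto $\mcal{O}_{\fa_t, \fb_t}$. Smooth joint dependence of $(t, g \cdot A) \mapsto g \cdot A_t$ reduces, via the orbit map, to smoothness of $t \mapsto A_t$ in the ambient space $\mcal{S}_m$, which is clear from the explicit form of $A_t$. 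The main conceptual obstacle would be to justify why the nonvanishing of the relevant $C_i(t)$ (needed to guarantee that $A_t \in \widetilde{\mcal{O}}_{\fa_t, \fb_t}$ and that the stabilizer does not jump) is uniform in $t$; this is handled by the observation that all vanishings among the $C_i$'s are forced purely by the face combinatorics of $(\gamma_f, \delta_f)$, which by hypothesis is fixed along the segment.
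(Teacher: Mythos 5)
Your proposal is correct and follows essentially the same route as the paper: the paper's one-line proof invokes exactly the constancy of the inequality pattern along the segment, and the surrounding construction it relies on (choosing $A_t \in \widetilde{\mcal{O}}_{\fa_t,\fb_t}$ via the vector in \eqref{equation_CCC} so that the $\rmso(m-1)$-stabilizers agree, then extending equivariantly by transitivity from Lemma~\ref{lemma_transitive}) is precisely what you have fleshed out. Your additional remarks on the smoothness of $t \mapsto A_t$ and the $t$-uniform (non)vanishing of the $C_i$'s are accurate details that the paper leaves implicit.
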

 
 \begin{proof}
 	The proof follows from the fact that $(\fa_t, \fb_t)$'s have the same inequality pattern for every $t \in [0,1]$.
 \end{proof}
 
Setting  $\rho_m := \rho_{\frak{a}, \frak{b}}$ and $\rho^\prime_m := \rho_{\frak{a}^\prime, \frak{b}^\prime}$ in~\eqref{equ_projerhoab}, we have the following commutative diagram$\colon$ 
\begin{equation}\label{equation_commutativityaa}
	\xymatrix{
		  {\mcal{O}}_{\frak{a}^{\vphantom{\prime}}, \frak{b}^{\vphantom{\prime}}} \ar[d]_{\rho_m^{\vphantom{\prime}}} \ar[r]^{\eta^{m}_{m-1}}
                               & \mcal{O}_{\frak{a}^{{\prime}}, \frak{b}^{{\prime}}} \ar[d]^{\rho_{m}^{{\prime}}}  \\
 \mcal{O}_{\frak{b}^{\vphantom{\prime}}}^{(m-1)} \ar[r]_{\eta^{m-1}} & \mcal{O}_{\frak{b}^{{\prime}}}^{(m-1)}}
\end{equation}  
Also, we consider the following diagram:
\begin{equation}\label{equation_commutativityaa2}
	\xymatrix{
		  {\mcal{O}}_{\frak{a}, \frak{b}} \ar[d]_{\eta^{m}_{m-1}}^\simeq \ar[r]^{\zeta_{\fa, \fb}}
                               &  \mcal{O}^{(m)}_{\frak{a}} \ar[d]^{\eta^{m}}_\simeq  \\
 {\mcal{O}}_{\frak{a}^\prime, \frak{b}^\prime} \ar@{^{(}->}[r]_{\iota_{\fa', \fb'}}  & \mcal{O}_{\frak{a}^\prime}^{(m)}}
\end{equation} 
where $\iota_{\fa', \fb'}$ is the natural inclusion and $\zeta_{\fa, \fb} := (\eta^{m})^{-1} \circ \iota_{\fa', \fb'} \circ \eta^{m}_{m-1}$. Note that $\zeta_{\fa, \fb}$ is $\rmso(m-1)$-equivariant.

\begin{lemma}\label{lemma_isotopy}
	The map $\zeta_{\fa, \fb}$ is isotopic to the natural inclusion $\iota_{\fa, \fb} \colon \mcal{O}_{\frak{a}, \frak{b}} \to \mcal{O}^{(m)}_{\frak{a}}$.
\end{lemma}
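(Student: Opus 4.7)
The plan is to construct an explicit isotopy between $\iota_{\fa, \fb}$ and $\zeta_{\fa, \fb}$ by interpolating along the straight line segment from $(\fa, \fb)$ to $(\fa', \fb')$ in the face $f$. Concretely, for each $t \in [0,1]$, set $\fa_t := (1-t)\fa + t \fa'$ and $\fb_t := (1-t)\fb + t \fb'$, which remain in the relative interior of $f$ so that the inequality pattern and the stabilizers $\scr{L}_{\fb_t} = \scr{L}_{\fb}$ are unchanged. Invoke the parameter families $\eta^{m}_{t} \colon \mcal{O}^{(m)}_{\fa} \to \mcal{O}^{(m)}_{\fa_t}$ from Lemma~\ref{lemma_fa} and $\eta^{m}_{m-1, t} \colon \mcal{O}_{\fa, \fb} \to \mcal{O}_{\fa_t, \fb_t}$ from Lemma~\ref{lemma_fb}, and define
\[
\zeta_t := (\eta^{m}_{t})^{-1} \circ \iota_{\fa_t, \fb_t} \circ \eta^{m}_{m-1, t} \colon \mcal{O}_{\fa, \fb} \to \mcal{O}^{(m)}_{\fa}
\]
where $\iota_{\fa_t, \fb_t} \colon \mcal{O}_{\fa_t, \fb_t} \hookrightarrow \mcal{O}^{(m)}_{\fa_t}$ is the natural inclusion. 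By construction, $\zeta_0 = \iota_{\fa, \fb}$ (taking $\eta^{m}_{0}$ and $\eta^{m}_{m-1, 0}$ to be the identity maps) and $\zeta_1 = \zeta_{\fa, \fb}$.

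Next I would verify two things: (i) $\zeta_t$ depends smoothly on $t$, and (ii) each $\zeta_t$ is a smooth embedding of $\mcal{O}_{\fa, \fb}$ into $\mcal{O}^{(m)}_{\fa}$. For (i), both $\eta^{m}_{t}$ and $\eta^{m}_{m-1, t}$ are $\mathrm{SO}(m)$- and $\mathrm{SO}(m-1)$-equivariant respectively and are defined by sending the basepoints $I^{(m)}_{\fa} \mapsto I^{(m)}_{\fa_t}$ and $A \mapsto A_t$ with $A_t$ the element of $\widetilde{\mcal{O}}_{\fa_t, \fb_t}$ picked out by~\eqref{equation_CCC}; since the stabilizers remain constant in $t$ and the basepoints vary smoothly, the equivariant maps $\eta^{m}_{t}$ and $\eta^{m}_{m-1, t}$ can be assembled into smooth families (explicitly, choose smooth sections of the quotient maps $\mathrm{SO}(m) \to \mcal{O}^{(m)}_{\fa}$ and $\mathrm{SO}(m-1) \to \mcal{O}_{\fa, \fb}$ and transport along them). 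For (ii), $\iota_{\fa_t, \fb_t}$ is an embedding for every $t$ as $\mcal{O}_{\fa_t, \fb_t}$ is a submanifold of $\mcal{O}^{(m)}_{\fa_t}$ by the argument in the proof of Proposition~\ref{prop_projbundle}, and composing with the diffeomorphisms $(\eta^{m}_{t})^{-1}$ and $\eta^{m}_{m-1, t}$ preserves the embedding property.

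The main obstacle I anticipate is step (i): establishing the smoothness of $\eta^{m}_{t}$ and $\eta^{m}_{m-1, t}$ as functions of $t$, and in particular ensuring that the basepoint choice can be made coherently. The cleanest remedy is to exploit the equivariant rank theorem: the maps $G/\mathrm{Stab}(I^{(m)}_{\fa_t}) \cong \mcal{O}^{(m)}_{\fa_t}$ are diffeomorphic via the same stabilizer for all $t$, so $\eta^{m}_{t}$ can be identified with the identity of $G/H$ composed with a smoothly varying basepoint translation in $\mathfrak{g}^*$. The same reasoning applies to $\eta^{m}_{m-1, t}$ using Corollary~\ref{cor_fiberhomegenous} and the $\scr{L}_{\fb}$-equivariant identification of $\widetilde{\mcal{O}}_{\fa_t, \fb_t}$ with the product of spheres in~\eqref{equation_C}, whose radii $C_\bullet$ depend smoothly on $(\fa_t, \fb_t)$. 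Once (i) and (ii) are in hand, $\{ \zeta_t \}_{t \in [0,1]}$ is the desired isotopy from $\iota_{\fa, \fb}$ to $\zeta_{\fa, \fb}$.
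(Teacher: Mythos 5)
Your proposal is correct and takes essentially the same approach as the paper: both define $\zeta_t := (\eta^{m}_{t})^{-1} \circ \iota_{\fa_t, \fb_t} \circ \eta^{m}_{m-1, t}$ using the families from Lemmas~\ref{lemma_fa} and~\ref{lemma_fb} along the linear interpolation $(\fa_t,\fb_t)$, and observe that $\zeta_0 = \iota_{\fa,\fb}$ and $\zeta_1 = \zeta_{\fa,\fb}$. The paper states this more tersely, while you additionally flag and address the smoothness-in-$t$ of the equivariant diffeomorphisms, which is a reasonable point the paper leaves implicit.
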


\begin{proof}
	Notice that 
	\[
		\zeta_{\fa, \fb} = (\eta^{m})^{-1} \circ \iota_{\fa', \fb'} \circ \eta^{m}_{m-1} = (\eta^{m}_1)^{-1} \circ \iota_{\fa_1, \fb_1} \circ \eta^{m}_{m-1,1}.
	\]
	where $\fa_t, \fb_t, \eta^{m}_t, \eta^{m}_{m-1,t}$ are given in Lemma \ref{lemma_fa} and \ref{lemma_fb} for $t \in [0,1]$. Then the family of maps $\{ \zeta_{\fa, \fb}^t \mid t \in [0,1] \}$ where
	\[
		\zeta_{\fa, \fb}^t := (\eta^m_t)^{-1} \circ \iota_{\fa_t, \fb_t} \circ \eta^m_{m-1,t} : \mcal{O}_{\fa, \fb} ~(\to \mcal{O}_{\fa_t, \fb_t} \to \mcal{O}^{(m)}_{\fa_t} \to) ~\mcal{O}^{(m)}_{\fa}
	\]
	defines an isotopy from $\iota_{\fa, \fb}$ to $\zeta_{\fa, \fb}$ since $\zeta_{\fa, \fb}^0 = \iota_{\fa, \fb}$.
\end{proof}

Recall that the iterated bundle $E_n \to E_{n-1} \to \dots \to E_3 \to E_2$ in the diagram~\eqref{figure_iterated_bundle} is constructed as the pull-backed bundles via the inclusions 
$\iota_{\fa, \fb} \colon {\mcal{O}}_{\frak{a}, \frak{b}} \to \mcal{O}_{\frak{a}}^{(m)}$'s. 
If we replace  $\iota_{\fa,\fb}$'s in~\eqref{figure_iterated_bundle} with $\zeta_{\fa, \fb}$'s in~\eqref{equation_commutativityaa2}, then we obtain a new iterated bundle
\begin{equation}\label{equ_iteratedbundletwo}
F_n \to F_{n-1} \to \dots \to F_3 \to F_2 = E_2.
\end{equation}
Then the following lemma follows from Lemma~\ref{lemma_isotopy}.

\begin{lemma}\label{lemma_doesnotaffectop}
Two total spaces $E_n$ in~\eqref{figure_iterated_bundle} and $F_n$ in~\eqref{equ_iteratedbundletwo} are diffeomorphic.
\end{lemma}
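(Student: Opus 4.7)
The plan is to interpolate smoothly between $E_n$ and $F_n$ using the isotopy from Lemma~\ref{lemma_isotopy} at each stage, and then apply Ehresmann's fibration theorem. The proof of Lemma~\ref{lemma_isotopy} produces, for each $m$ with $3 \leq m \leq n$, a smooth isotopy
\[
\zeta^t_{\fa^{(m)}, \fa^{(m-1)}}\colon \mcal{O}_{\fa^{(m)}, \fa^{(m-1)}} \to \mcal{O}^{(m)}_{\fa^{(m)}}, \qquad t \in [0,1],
\]
of smooth embeddings from $\iota_{\fa^{(m)}, \fa^{(m-1)}}$ (at $t=0$) to $\zeta_{\fa^{(m)}, \fa^{(m-1)}}$ (at $t=1$), all landing in the fixed target $\mcal{O}^{(m)}_{\fa^{(m)}}$.

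I would first construct a smooth one-parameter family $\{G^t_m\}_{t \in [0,1]}$ of iterated bundles by induction on $m$, using $\zeta^t$ in place of $\iota$ at every intermediate stage. Setting $G^t_2 := \{\textup{pt}\}$, I define $G^t_m$ as the pullback of the fibration $\rho_{\fa^{(m)}, \fa^{(m-1)}}\colon \mcal{O}_{\fa^{(m)}, \fa^{(m-1)}} \to \mcal{O}^{(m-1)}_{\fa^{(m-1)}}$ along the composition $G^t_{m-1} \to \mcal{O}^{(m-1)}_{\fa^{(m-1)}}$, whose last arrow is $\zeta^t_{\fa^{(m-1)}, \fa^{(m-2)}}$ applied to the top map of the stage-$(m{-}1)$ bundle. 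By construction, $G^0_m = E_m$ and $G^1_m = F_m$.

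Next, I would assemble $\widetilde{G}_m := \bigsqcup_{t \in [0,1]} \{t\} \times G^t_m$ into a smooth manifold with boundary. The submersivity of each $\rho_{\fa^{(m)}, \fa^{(m-1)}}$ (Proposition~\ref{prop_projbundle}) combined with the smooth dependence of $\zeta^t$ on $t$ ensures that the iterated pullback varies smoothly and transversely, so that the projection $\widetilde{G}_m \to [0,1]$ is a smooth submersion. Each fiber $G^t_m$ is compact by Proposition~\ref{proposition_mainiterated} (being an iterated bundle whose stages are points or products of spheres), so the projection is proper. Ehresmann's fibration theorem then trivializes it as $\widetilde{G}_m \cong [0,1] \times G^0_m$, which gives the desired diffeomorphism $E_n = G^0_n \cong G^1_n = F_n$.

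The main obstacle I expect is carefully tracking the inductive setup: the map $G^t_{m-1} \to \mcal{O}^{(m-1)}_{\fa^{(m-1)}}$ used at stage $m$ itself depends on $t$ through the $\zeta^t$'s at earlier stages, so joint smoothness in $t$ and transversality of the iterated pullback must be verified at every level. Once this bookkeeping is in place, the conclusion is a standard application of the homotopy invariance of smooth pullback bundles along an isotopy of embeddings into a fixed smooth fibration.
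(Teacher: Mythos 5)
Your argument is correct and supplies precisely the standard reasoning the paper tacitly invokes: the paper states Lemma~\ref{lemma_doesnotaffectop} without proof, remarking only that it ``follows from Lemma~\ref{lemma_isotopy}.'' You have correctly identified this as homotopy invariance of pullback bundles, assembled into a one-parameter family and trivialized via Ehresmann's theorem; the bookkeeping concern you flag resolves cleanly because each $\rho_{\fa^{(m)},\fa^{(m-1)}}$ is a submersion (Proposition~\ref{prop_projbundle}), so transversality of every iterated pullback is automatic and the projection $\widetilde{G}_m \to [0,1]$ is a proper submersion by compactness of all the fibers.
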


Now, we are ready to prove Proposition~\ref{proposition_twofibersarediffeo}.

\begin{proof}[Proof of Proposition~\ref{proposition_twofibersarediffeo}]
We start from two squared diagrams~\eqref{equation_commutativityaa} and~\eqref{equation_commutativityaa2} as in~\eqref{equation_commutativity33}.
Proceeding inductively, we obtain two parallel diagrams consisting of sequences $(\rho^\prime_\bullet, \iota)$ and $(\rho_\bullet, \zeta)$ of bundles and embeddings. 
\begin{equation}\label{equation_commutativity33}
	\xymatrix{
		 {} & \mcal{O}_{\frak{a}, \frak{b}} \ar[dd]^{\rho_m} \ar[rr]^{\zeta_{\fa, \fb}} & & \mcal{O}_{\frak{a}}^{(m)}  \\
 {\mcal{O}}_{\frak{a}^\prime, \frak{b}^\prime} \ar[dd]_{\rho^\prime_m} \ar[ru]^{\eta^m_{m-1}}  \ar@{^{(}->}[rr]_{\iota_{\fa', \fb'}} & & \mcal{O}_{\frak{a}^\prime}^{(m)} \ar[ru]_{\eta^m} & \\
{} &  {\mcal{O}_{\frak{b}}^{(m-1)}}  & & \\
 {\mcal{O}_{\frak{b}^\prime}^{(m-1)}} \ar[ru]_{\eta^{m-1}} & & & 
 }
\end{equation} 
By pulling back the bundles to the left, we obtain two iterated bundles
$$
\begin{cases}
F_n \to F_{n-1} \to \dots \to F_3 \to F_2, \\
E_n^\prime \to E_{n-1}^\prime \to \dots \to E_3^\prime \to E_2^\prime
\end{cases}
$$ 
which are obtained from $(\rho^\prime_\bullet, \iota)$ and $(\rho_\bullet, \zeta)$ respectively. 
Moreover, the diffeomorphisms $\eta^m_{m-1}$'s and $\eta^m$'s induce a bundle isomorphism between them. 
In particular, $F_n$ and $E_n^\prime$ are diffeomorphic.
Notice that $E_n^\prime$ is diffeomorphic to $(\Phi^{(n)}_\lambda)^{-1}({\bf{u}}^\prime)$.
By Lemma~\ref{lemma_doesnotaffectop}, $F_n$ is diffeomorphic to $E_n \simeq (\Phi^{(n)}_\lambda)^{-1}({\bf{u}})$. In sum,
$$
(\Phi_\lambda^{(n)})^{-1} ({\bf{u}^\prime}) \simeq E^\prime_n \simeq F_n \simeq E_n \simeq (\Phi_\lambda^{(n)})^{-1} ({\bf{u}}).
$$
This completes the proof of Proposition~\ref{proposition_twofibersarediffeo}. 
\end{proof}

Finally, we are ready to complete the proof of Theorem~\ref{thm_fiber}.

\begin{proof}[Proof of Theorem~\ref{thm_fiber}]
Proposition~\ref{proposition_mainiterated},~\ref{prop_isotropic}, and~\ref{proposition_twofibersarediffeo} yield Theorem~\ref{thm_fiber}
\end{proof}

\vspace{0.1cm}
\section{Classification of Lagrangian faces}
\label{secClassificationOfLagrangianFaces}

Every GC fiber is an isotropic submanifold of $\mcal{O}_\lambda^{(n)}$ by Theorem~\ref{thm_fiber} so that
a GC fiber is Lagrangian if and only if its dimension equals the half of the real dimension of $\mcal{O}_\lambda^{(n)}$.
In this section, we enhance the combinatorial process in Section~\ref{secTheTopologyOfGelfandCetlinFibers} to derive a simple way for computing the dimension of the fiber. As a byproduct, we classify the positions of Lagrangian fibers of the GC systems of the $\mathrm{SO}(n)$-orbits.

For this purpose, we need new types of blocks.

\begin{definition} Let $k \in \N$ be a positive integer.
\begin{enumerate}
\item (Definition 5.16. in \cite{CKO}) For a lattice point $(a_0,b_0) \in \Z^2$, the $L_k$\emph{-block} at $(a_0, b_0)$, denoted by $L_k(a_0,b_0)$, is defined by 
\begin{equation*}\label{eq_lkunion}
L_k(a_0, b_0) := \bigcup_{(a_0, b_0)+ (a,b)} \blacksquare^{(a,b)}
\end{equation*}
where the union is taken over all $(a,b) \in \mathbb{N}^2$ satisfying either
$(a,b) = (i, 0)$ or $(a,b) = (0, i)$
for $i  =0,\cdots, k-1$. We simply call $L_k(a_0,b_0)$ for some $k \in \N$ and $(a_0, b_0) \in \Z^2$ an $L$-block.

\item For a lattice point $(a_0,b_0) \in \Z^2$, the $I_k$\emph{-block} at $(a_0, b_0)$, denoted by $I_k(a_0, b_0)$, is defined by
\begin{equation*}\label{eq_lkunion}
	I_k(a_0, b_0) := \bigcup_{(a_0, b_0) + (a,b)} \blacksquare^{(a,b)}
\end{equation*}
where the union is taken over all $(a,b) \in \mathbb{N}^2$ such that $(a,b) = (0, i)$ for $i = 0, \cdots, k-1$. We simply call $I_k(a_0,b_0)$ for some $k \in \N$ and $(a_0, b_0) \in \Z^2$ an $I$-block.
\end{enumerate}
\end{definition}


\begin{figure}[h]
	\vspace{0.1cm}
	\scalebox{0.8}{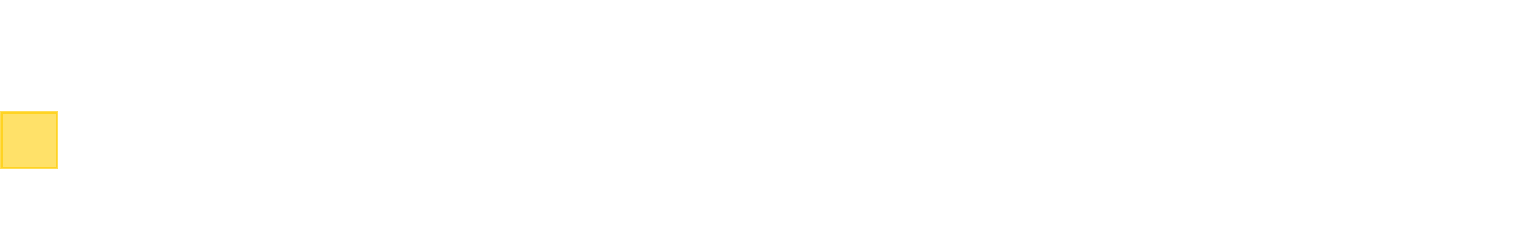}
		\vspace{-0.2cm}
	\caption{\label{Fig_LIblocks} $L$-blocks and $I$-blocks.}	
\end{figure}
\vspace{-0.1cm}


\begin{definition}\label{def_LIfillable}
We say that a face $(\gamma, \delta(\gamma))$ is \emph{fillable by} $L$-blocks and $I$-blocks if the ladder diagram $\Gamma_\lambda^{(n)}$ can be covered by $L$-blocks and $I$-blocks 
obeying the following rules:
\begin{enumerate}
\item the interiors of any two blocks must not intersect each other. 
\item the interiors of the $L$-blocks must not intersect the isogram $\gamma$, 
\item the rightmost and the top edge of each $L$-block must be contained in the isogram $\gamma$,  and 
\item the top edge of each $I$-block must be contained in the base of $\gamma$.
\end{enumerate} 
We call a face {\em Lagrangian} if it is fillable by $L$-blocks and $I$-blocks. 
\end{definition}

\begin{remark}
The dimension of a GC fiber only depends on $\gamma$, \emph{not} on the coastline $\delta$ by Proposition~\ref{proposition_twofibersarediffeo}.
\end{remark}

\begin{proposition}\label{prop_Lagrangianfacetests}
Let $(\gamma, \delta(\gamma))$ be the face of $\Gamma_\lambda^{(n)}$ corresponding to a face $f$. Then the following are equivalent.
\begin{enumerate}
\item The face $(\gamma, \delta(\gamma))$ is Lagrangian.
\item For a relative interior point ${\bf{u}}$ of $f$, the fiber $\left(\Phi_\lambda^{(n)}\right)^{-1}({\bf{u}})$ is a Lagrangian submanifold of $\mcal{O}^{(n)}_\lambda$.
\item For any relative interior point ${\bf{u}}$ of $f$, the fiber $\left(\Phi_\lambda^{(n)} \right)^{-1}({\bf{u}})$ is a Lagrangian submanifold of $\mcal{O}^{(n)}_\lambda$.
\end{enumerate}
\end{proposition}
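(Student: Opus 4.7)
I will argue the three equivalences in stages. The equivalence $(2) \Leftrightarrow (3)$ will follow immediately from Theorem~\ref{thm_fiber}: any two fibers over points in the relative interior of a single face $f$ are diffeomorphic isotropic submanifolds, hence simultaneously Lagrangian or not.

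For $(1) \Leftrightarrow (2)$, fix ${\bf u}$ in the relative interior of $f$. Since $\left(\Phi_\lambda^{(n)}\right)^{-1}({\bf u})$ is isotropic by Theorem~\ref{thm_fiber}, it is Lagrangian iff $\dim \left(\Phi_\lambda^{(n)}\right)^{-1}({\bf u}) = \tfrac{1}{2} \dim_\R \mcal{O}_\lambda^{(n)}$; by Theorem~\ref{theorem_GCpolytopeineq} and the correspondence between the coordinates $u_{i,j}$ and the unit boxes $\square^{(i,j)}$ of the ladder diagram, this half-dimension equals $N := |\Gamma_\lambda^{(n)}|$, the total number of unit boxes. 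I plan to combine this with the dimension formula~\eqref{eq_dimformulaprop} of Proposition~\ref{proposition_mainiterated},
\[
\dim \left(\Phi_\lambda^{(n)}\right)^{-1}({\bf u}) = \sum_{k=3}^{n} \sum_{\mcal{D}} \dim S_k(\mcal{D}),
\]
where the inner sum runs over cut regions $\mcal{D}$ of $W_k$ along $\gamma_f$. Direct inspection of~\eqref{equ_fibersfromblocks} shows that $\dim S_k(\mcal{D})$ equals the unit-box count $|\mcal{D}|$ precisely when $\mcal{D}$ is either an $M_\ell$-block containing a bottom point of $W_k$ or an $N_\ell$-block lying below $\beta(\gamma_f)$, and vanishes otherwise; call such cut regions \emph{saturating}. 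The problem then reduces to the purely combinatorial claim that the total box count of saturating cut regions, summed over $k$, equals $N$ if and only if $(\gamma_f, \delta(\gamma_f))$ is fillable by $L$- and $I$-blocks.

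To prove this combinatorial equivalence, the plan is to construct an explicit dictionary between fillings of $(\gamma_f, \delta(\gamma_f))$ by $L$- and $I$-blocks (in the sense of Definition~\ref{def_LIfillable}) and collections of saturating cut regions whose unit boxes total $N$. Each $L_\ell$-block in a filling, placed above $\beta(\gamma_f)$ with its top and right edges on $\gamma_f$, will be matched with the saturating $M_\ell$-block of $2\ell-1$ unit boxes appearing in the unique $W_k$ whose bottom point corresponds to the bottom-left corner of that $L_\ell$-block; the two ``L''-shapes differ by the reflection prescribed by $\gamma_f$. Each $I_k$-block below the base, whose top edge lies on $\beta(\gamma_f)$, will be matched with the saturating $N$-block components (totalling $k$ boxes) that together cover its unit boxes across the relevant $W$-blocks. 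Once this dictionary is in place, the forward direction $(1) \Rightarrow (2)$ is a straightforward contribution-count, while the reverse direction $(2) \Rightarrow (1)$ will be established by induction on the combinatorial size of $\gamma_f$, peeling off at each stage the outermost saturating cut region near a terminal vertex of $\Gamma_\lambda^{(n)}$ and reading off the corresponding outer $L$- or $I$-block, then invoking the inductive hypothesis on the smaller diagram.

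The main obstacle will be establishing this dictionary rigorously near the diagonal, where the forbidden pattern of Definition~\ref{def_combinatorialobjects}(2) on $\gamma_f$ and the choice of coastline $\delta(\gamma_f)$ jointly determine how $\beta(\gamma_f)$ interacts with the diagonal boxes, and thereby whether each diagonal piece is ``above the base'' (matched with an $M$-block) or ``below the base'' (matched with an $N$-block). The combinatorial bookkeeping should closely parallel the $\mathrm{U}(n)$-analysis carried out in \cite[\S5]{CKO}, but extra care will be required to accommodate the even-dimensional sphere factors arising from $N$-blocks (in particular, the role of $I$-blocks), a feature absent in the $\mathrm{U}(n)$ setting.
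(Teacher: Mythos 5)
Your plan is essentially the paper's proof. The equivalence $(2)\Leftrightarrow(3)$ comes from Proposition~\ref{proposition_twofibersarediffeo}, the reduction of $(1)\Leftrightarrow(2)$ to a dimension count comes from Propositions~\ref{prop_isotropic} and~\ref{proposition_mainiterated}, and the arithmetic you isolate — that $\dim S_k(\mcal{D})$ equals the unit-box count $|\mcal{D}|$ for saturating $M$- or $N$-block cut regions and is zero otherwise — is exactly what the paper's identity~\eqref{equ_dimension} encodes. The dictionary you propose ($L$-blocks $\leftrightarrow$ $M$-blocks containing a bottom point, $I$-blocks $\leftrightarrow$ $M$- or $N$-blocks below the base, both preserving area) is what the paper establishes by constraint-pushing on the $u_{i,j}$ inequalities; your observation that $L$-blocks sit above $\beta(\gamma_f)$ and $I$-blocks below is also what the paper reads off from conditions (3) and (4) of Definition~\ref{def_LIfillable}.

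The one structural difference is your plan for $(2)\Rightarrow(1)$. The paper avoids any induction: it takes the \emph{maximal} filling by $L$- and $I$-blocks (unique once one insists that $I$-blocks be stacked along the diagonal below the base, and $L$-blocks are forced by the isogram), shows $\dim(\Phi_\lambda^{(n)})^{-1}({\bf u})$ equals the total area covered by \emph{this} filling, and then observes the fiber is Lagrangian iff that maximal filling exhausts $\Gamma_\lambda^{(n)}$, which is literally the definition of fillable. Once your dictionary is in place this argument gives both implications at once, so the induction you sketch (peeling off an outermost saturating region near a terminal vertex) is extra work; it is not wrong, but you would have to resolve which saturating region to peel when several terminal vertices compete, and you would be re-deriving the uniqueness of the maximal filling that makes the direct argument go through. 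If you want to stay closest to the source, replace the induction with the maximal-filling observation.
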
 

\begin{remark}
The authors with Oh characterize all faces which have Lagrangian fibers at their interior for the co-adjoint $\mathrm{U}(n)$-orbits by filling the diagram by $L$-blocks, see \cite[Corollary 5.23]{CKO}. Unlike the $\mathrm{U}(n)$-case, one should pay attention to the base of the isogram $\gamma$ in a face. Even though the GC patterns for two points in the polytope are same, whether a component of the GC system becomes zero or not \emph{does} make a distinction on the dimensions of GC fibers, see Example~\ref{example_Lagrangianfaceclass}. 
\end{remark}

\begin{proof}[Proof of Proposition~\ref{prop_Lagrangianfacetests}]
By Proposition~\ref{proposition_twofibersarediffeo}, the fiber over ${\bf{u}}_1$ is diffeomorphic to that over ${\bf{u}}_2$ if two points are contained in the relative interior of $f$ so that the second and the third statement are equivalent.
It suffices to prove the equivalence of the first and the second. 

Let us cover an isogram $\gamma$ by $L$-blocks and $I$-blocks
\begin{equation}\label{equ_coveringbloIL}
	\{L_{k_i}(a_i, b_i) ~|~ i=1,\cdots,r \} \, \cup \, \{ I_{\ell_i}(c_i, d_i) ~|~ j=1,\cdots,s \},
\end{equation}
satisfying the conditions $(1),(2),(3)$ and $(4)$ in Definition \ref{def_LIfillable}. Note that $I$-blocks (resp. $L$-blocks) \emph{cannot} be located at the above (resp. below) base of $\gamma$ because of $(4)$ (resp. $(3)$). There does not exist any ambiguity putting $L$-blocks. To obtain the maximum filling, we put the $I$-blocks so that the bottom edge of each $I$-block must lie on the bottom of a diagonal box. 

Observe that
\begin{itemize}
	\item The block $L_{k_i}(a_i, b_i)$ has area $2k_i - 1$, 
	\item The assumption that one can put $L_{k_i}(a_i, b_i)$-block into $\gamma$ yields that 
	\[
		  {u}_{a_i, b_i + k_i } > {u}_{a_i , b_i + k_i - 1}  = \cdots = {u}_{a_i, b_i} =  \cdots = {u}_{a_i + k_i - 1, b_i}  > \max( {u}_{a_i + k_i, b_i }, 0 ).
	\]	
	Because of the min-max principle, there exists a unique $M_{k_i}$-block satisfying 
		\begin{enumerate}
			\item its leftmost unit box is located at $(a_i, b_i + k_i  - 1)$
			\item it does \emph{not} intersect the isogram $\gamma$ in its interior, see Figure \ref{Fig_NMIL}. 	
		\end{enumerate}
	\item Each $L_{k_i}(a_i, b_i)$ corresponds to a subsequence of type $(2), (4),$ or $(8)$ in \eqref{equ_pattern_odd} and \eqref{equ_pattern_even}. Therefore, by Lemma \ref{lemma_descriptionofsab}, each $L_{k_i}(a_i, b_i)$ detects $\mathbb{S}^{2k_i - 1}$-factor in~\eqref{equ_fibersfromblocks} at the $(a_i + b_i + k_i)$-th stage.
\end{itemize}

Observe that
\begin{itemize}
	\item The block $I_{\ell_i}(c_i, d_i)$ has area $\ell_i$, 
	\item The assumption that one can put $I_{\ell_i}(c_i, d_i)$-block into $\gamma$ yields that 
	\[
		  {u}_{c_i, d_i + \ell_i } > {u}_{c_i , d_i + \ell_i - 1}  = \cdots = {u}_{c_i, d_i} = 0.
	\]	
	Because of the min-max principle, there exists a unique 
		$$
			\begin{cases}
				M_{\ell_i}\textup{- block} \quad \mbox{if $\ell_i$ is odd} \\
				N_{\ell_i}\textup{- block} \quad \mbox{if $\ell_i$ is even}
			\end{cases}	
		$$
	satisfying 
		\begin{enumerate}
			\item its leftmost unit box is located at $(c_i, d_i + \ell_i  - 1)$
			\item it does \emph{not} intersect the isogram $\gamma$ in its interior, see Figure \ref{Fig_NMIL}. 	
		\end{enumerate}
	\item Each $I_{\ell_i}(c_i, d_i)$ corresponds to the subsequence of type $(1)$ or $(5)$ in \eqref{equ_pattern_odd} and \eqref{equ_pattern_even}. Again by Lemma \ref{lemma_descriptionofsab}, each 
$I_{\ell_i}(c_i, d_i)$ detects $\mathbb{S}^{\ell_i}$-factor in~\eqref{equ_fibersfromblocks} at the $(c_i + d_i + \ell_i )$-th stage. 
\end{itemize}

\begin{figure}[h]
	\scalebox{0.9}{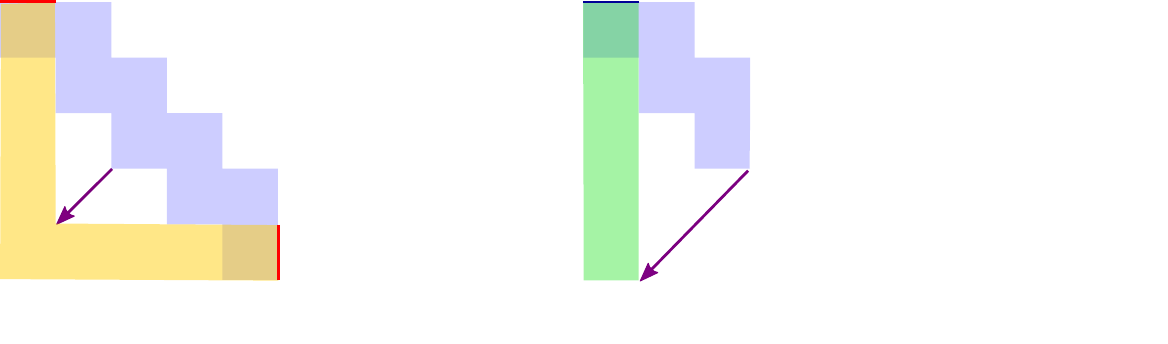}
	\caption{\label{Fig_NMIL} The correspondence between $M$-and $N$-blocks and $I$-and $L$-blocks.}	
\end{figure}
\vspace{-0.1cm}

Summing up, we conclude that 
\begin{equation}\label{equ_dimension}
	\dim \left(\Phi_\lambda^{(n)} \right)^{-1}({\bf{u}}) = \sum_{k=3}^{n} \dim S_k(\gamma) = 	
	\sum_{i=1}^r \text{area $\left(L_{k_i}(a_i, b_i) \right)$} + \sum_{i=1}^s \text{area $\left(I_{\ell_i}(c_i, d_i)\right)$}. 
\end{equation}

Now we go back to the proof of Proposition \ref{prop_Lagrangianfacetests}.
Since $(\gamma,\delta)$ is fillable by $L$ and $I$-blocks if and only if 
\[
	\frac{1}{2} \dim_\R \mcal{O}_\lambda^{(n)} = \text{area $\left(\Gamma_\lambda^{(n)}\right)$} = \sum_{i=1}^r \text{area $\left(L_{k_i}(a_i, b_i) \right)$} + \sum_{i=1}^s \text{area $\left(I_{\ell_i}(c_i, d_i)\right)$} \underbrace{=}_{\eqref{equ_dimension}} 
	\dim \left(\Phi_\lambda^{(n)}\right)^{-1}({\bf{u}}).
\]
It completes the proof.
\end{proof}

As a by-product, we obtain the dimension formula for the fiber from~\eqref{equ_dimension}.

\begin{corollary}\label{cor_fillable}
Let $f$ be the face containing a given point $\bf{{u}}$ in its relative interior. Fill the corresponding isogram $\gamma_f$ by $L$-blocks and $I$-blocks as many as possible obeying the conditions in Definition~\ref{def_LIfillable}.
Then the dimension of the fiber over $\bf{{u}}$ is exactly the area of the regions filled by $L$-blocks and $I$-blocks. 
\end{corollary}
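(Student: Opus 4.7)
The proposal is to extract the dimension formula directly from the combinatorial bookkeeping that already appears in the proof of Proposition~\ref{prop_Lagrangianfacetests}. Starting point: by Proposition~\ref{proposition_mainiterated}, the fiber dimension decomposes as
\[
\dim \left(\Phi_\lambda^{(n)}\right)^{-1}({\bf u}) \;=\; \sum_{k=3}^{n} \dim S_k(f),
\]
where each $S_k(f)$ is the product, over closed regions $\mcal D$ cut from $W_k$ by the isogram $\gamma_f$, of the spherical factors $S_k(\mcal D)$ prescribed in~\eqref{equ_fibersfromblocks}. The contributing regions are precisely the $M_\ell$-blocks touching a bottom point of $W_k$ and the $N_\ell$-blocks lying below the base $\beta(\gamma_f)$; the former give spheres of odd dimension $2\ell-1$, the latter give spheres of even dimension $2\ell$, while all other cut pieces contribute a point.

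The key step is to set up a bijection between the maximal filling of $\gamma_f$ by $L$- and $I$-blocks and the list of contributing $M$- and $N$-blocks across all $W_k$. First I would observe that each $L_k(a,b)$-block fitted into $\gamma_f$ (with its rightmost and top edges on $\gamma_f$) forces a constant string of type $(2), (4)$, or $(8)$ in~\eqref{equ_pattern_odd}--\eqref{equ_pattern_even} at the pair $(\fa^{(m)},\fa^{(m-1)})$ with $m=a+b+k$; by the min-max principle this constant string lives in a unique $M_k$-block of $W_m$ sitting on the far side of $\gamma_f$ and rooted at a bottom point. Lemma~\ref{lemma_descriptionofsab} then identifies the corresponding spherical factor as $\mathbb S^{2k-1}$, whose dimension $2k-1$ equals the area of $L_k$. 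Symmetrically, each $I_\ell(c,d)$-block sitting with its top edge on the base $\beta(\gamma_f)$ corresponds to a constant string of type $(1)$ or $(5)$ (the base condition forces the string to terminate at a zero component), producing a unique $M_\ell$- or $N_\ell$-block in $W_{c+d+\ell}$ below the base whose associated sphere $\mathbb S^\ell$ has dimension $\ell=$ area of $I_\ell$. Conversely, any contributing $M$- or $N$-block in the iterated-bundle picture arises from exactly one $L$- or $I$-block in the filling, because a non-trivial spherical stratum at level $k$ forces the appropriate string-type and hence pins down a unique placement of an $L$- or $I$-block whose rightmost/top edges lie on $\gamma_f$ (and, for $I$-blocks, whose top edge lies on the base).

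Once that bijection is in hand, the proof concludes by summing areas:
\[
\dim \left(\Phi_\lambda^{(n)}\right)^{-1}({\bf u}) \;=\; \sum_{k=3}^{n} \dim S_k(f) \;=\; \sum_{i=1}^{r} \mathrm{area}\bigl(L_{k_i}(a_i,b_i)\bigr) \;+\; \sum_{j=1}^{s} \mathrm{area}\bigl(I_{\ell_j}(c_j,d_j)\bigr),
\]
which is exactly~\eqref{equ_dimension}. The right-hand side equals the area of the region of $\Gamma_\lambda^{(n)}$ covered by the $L$- and $I$-blocks in any maximal filling, proving the corollary. I would also remark that maximality of the filling is automatic: any unit square not covered by some $L$- or $I$-block under the rules of Definition~\ref{def_LIfillable} corresponds to a point fiber at the relevant stage, so no area is ever lost in the count.

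The main obstacle I anticipate is making the correspondence in the second paragraph rigorous at the boundaries between blocks, in particular at diagonal boxes $\square^{(j,j)}$ where the isogram's forbidden pattern (Figure~\ref{Fig_Cond_Isogram}) interacts with the choice of coastline and with the transition between $L$- and $I$-fillings across the base. Verifying that every constant string of types $(1),(2),(4),(5),(8)$ is accounted for exactly once, while types $(3),(3'),(6),(6'),(7),(7'),(4'),(8')$ contribute no area (consistent with their point-stabilizer quotients from Lemmas~\ref{lemma_case_odd} and~\ref{lemma_case_even}), is the bookkeeping that requires care; but all of the underlying spherical identifications are already established, so the argument is essentially a matching-up exercise modelled on the analogous $\mathrm{U}(n)$ computation in~\cite{CKO}.
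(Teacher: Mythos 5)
Your argument matches the paper's derivation almost line for line: both extract the dimension count from Proposition~\ref{proposition_mainiterated}, then match each $L_{k}$-block to the unique $M_{k}$-block (constant strings of type $(2),(4),(8)$, giving $\mathbb{S}^{2k-1}$) and each $I_{\ell}$-block to the unique $M$- or $N$-block below the base (constant strings of type $(1)$ or $(5)$, giving $\mathbb{S}^{\ell}$), and sum the areas to recover~\eqref{equ_dimension}. This is exactly how the paper establishes the formula in the proof of Proposition~\ref{prop_Lagrangianfacetests}, from which the corollary follows immediately, so your proof is correct and essentially the same.
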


\begin{example}\label{example_Lagrangianfaceclass}
Let  $n = 5$ and $\lambda = (3, 0)$. The orbit $\mcal{O}^{(5)}_\lambda$ is the orthogonal Grassmannian $\mathrm{OG}(1, \C^5)$. 
Then the GC polytope $\Delta_\lambda^{(5)}$ is defined by the intersection of the four half-planes$\colon$ 
\[
3 - u_{1,3} \geq 0, \, u_{1,3} - u_{1,2} \geq 0, \, u_{1,2} - u_{1,1} \geq 0, \, u_{1,2} + u_{1,1} \geq 0, 
\]
which is a simplex as in Figure~\ref{Fig_OG15}. In this example, there are a zero-dimensional Lagrangian face, one two-dimensional Lagrangian face, and one three-dimensional Lagrangian face which are listed in Figure~\ref{Fig_LagOG15}. 

Consider two points $(0,0,0)$ and $(1,1,1)$ satisfying the same GC patterns. ($0 = 0 = |0|$ and $1 = 1 = |1|$) However, the dimensions of two fibers $(\Phi_\lambda^{(5)})^{-1}(0,0,0) \simeq \mathbb{S}^3$ and $(\Phi_\lambda^{(5)})^{-1}(1,1,1) \simeq \mathbb{S}^1$ are different. As depicted in the first and last graphs in Figure~\ref{Fig_LagOG15}, one may put an $I_3$-block for the first case but cannot put an $I_3$-block for the second case.

\begin{figure}[h]
	\scalebox{0.7}{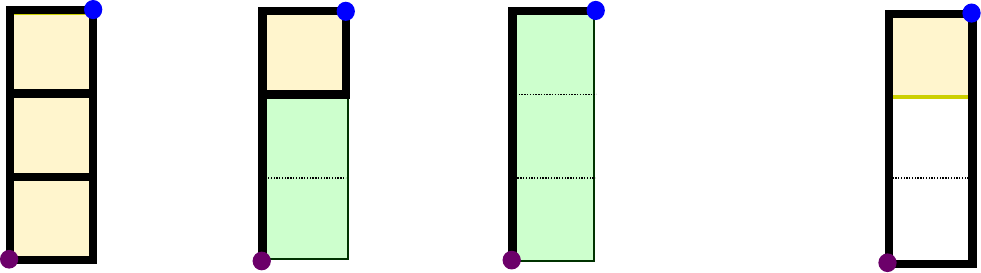}
	\vspace{-0.1cm}
	\caption{\label{Fig_LagOG15} The Lagrangian faces and a non-Lagrangian face of $\Delta_\lambda^{(5)}$}	
\end{figure}
\end{example}
\vspace{-0.2cm}

\begin{example}
Let $n = 6$ and $\lambda = (3, 3, 3)$. Then $\mcal{O}^{(6)}_\lambda$ is the orthogonal Grassmannian $\mathrm{OG}(3, \C^6)$. By classifying the faces fillable by $L$-blocks and $I$-blocks, one can classify all Lagrangian faces. In this case, it has six faces such that any fiber located at their interior is Lagrangian as depicted in Figure~\ref{Fig_LagOG36}. 

\begin{figure}[h]
	\scalebox{0.7}{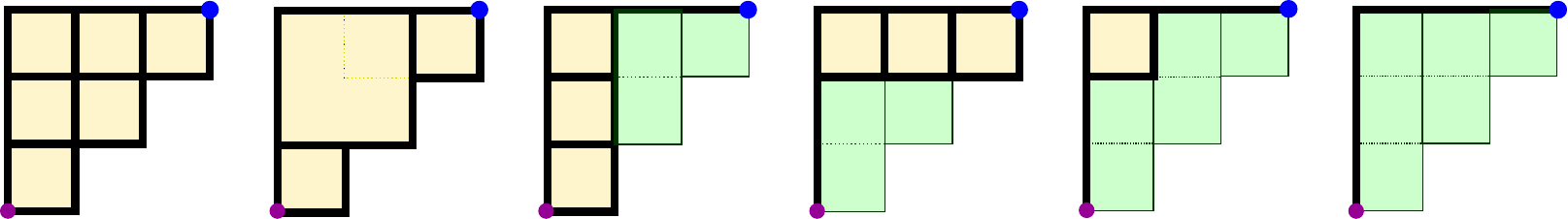}
		\vspace{-0.1cm}
	\caption{\label{Fig_LagOG36} The Lagrangians faces of $\Delta_\lambda^{(6)}$}	
\end{figure}
\end{example}
\vspace{-0.1cm}

\begin{remark}
With the dimension formula in~Corollary~\ref{cor_fillable} in hand, the dimensions of fibers can be quickly calculated. Because of the presence of $I_2$-block, a non-toric fiber can appear at a point in a face of codimension $2$. 
In the type of $\mathrm{U}(n)$, every fiber at the relative interior of any strata of codimension $\leq 2$ is an isotropic torus. Meanwhile, for the case of the $\mathrm{SO}(n)$-orbits, it is only guaranteed that every fiber at the relative interior of any strata of codimension $\leq 1$ is an isotropic torus.

Combining Theorem~\ref{thm_diacorrespo} and Corollary~\ref{cor_fillable}, one can also calculate the dimension of the inverse image of a face. 
In Example~\ref{example_Lagrangianfaceclass}, it contains the edge $\{ (u_{1,1}, u_{1,2}, u_{1,3}) ~|~ u_{1,1} = u_{2,2} = 0, 0 \leq u_{1,3} \leq 3 \}$ 
such that the inverse image of its interior is of codimension two. By applying the formula for the Maslov index in \cite{CK_mono}, one can see that there exists a holomorphic disc of Maslov index two intersecting the inverse image over the edge. 
This feature implies that the Floer theoretical disc potential \cite{FOOO} might contain more than the terms corresponding to the facets. It contrasts with the case of partial flag manifolds of type $A$ where the Floer theoretical disc potential consists of the discs corresponding to the facets, see Nishinou--Nohara--Ueda \cite{NNU1}. 
\end{remark}


\begin{thebibliography}{BCFKvS00}

\bibitem[ACK18]{ACK}
B.~H. An, Y.~Cho, and J.~S. Kim, \emph{On the {$f$}-vectors of
  {G}elfand-{C}etlin polytopes}, European J. Combin. \textbf{67} (2018),
  61--77.

\bibitem[ALL18]{ALL}
A.~Alekseev, J.~Lane, and Y.~Li, \emph{The {$U(n)$} {G}elfand-{Z}eitlin system
  as a tropical limit of {G}inzburg-{W}einstein diffeomorphisms}, Philos.
  Trans. Roy. Soc. A \textbf{376} (2018), no.~2131, 20170428, 20.

\bibitem[AM07]{AM}
A.~Alekseev and E.~Meinrenken, \emph{Ginzburg-{W}einstein via
  {G}elfand-{Z}eitlin}, J. Differential Geom. \textbf{76} (2007), no.~1, 1--34.

\bibitem[Aud]{Au}
M.~Audin, \emph{The topology of torus actions on symplectic manifolds},
  Progress in Mathematics, vol.~93.

\bibitem[BCFKvS00]{BCKV}
V.~V. Batyrev, I.~Ciocan-Fontanine, B.~Kim, and D.~van Straten, \emph{Mirror
  symmetry and toric degenerations of partial flag manifolds}, Acta Math.
  \textbf{184} (2000), no.~1, 1--39.

\bibitem[BMMT18]{BMMT}
A~Bolsinov, V.~S. Matveev, E.~Miranda, and S.~Tabachnikov, \emph{Open problems,
  questions and challenges in finite-dimensional integrable systems}, Philos.
  Trans. Roy. Soc. A \textbf{376} (2018), no.~2131, 20170430, 40.

\bibitem[BMZ18]{BMZ}
D.~Bouloc, E.~Miranda, and N.~T. Zung, \emph{Singular fibres of the
  {G}elfand-{C}etlin system on {$\mathfrak{ u}(n)^*$}}, Philos. Trans. Roy.
  Soc. A \textbf{376} (2018), no.~2131, 20170423, 28.

\bibitem[Bou18]{Bou}
D.~Bouloc, \emph{Singular fibers of the bending flows on the moduli space of
  3{D} polygons}, J. Symplectic Geom. \textbf{16} (2018), no.~3, 585--629.

\bibitem[Cal02]{Cal}
P.~Caldero, \emph{Toric degenerations of {S}chubert varieties}, Transform.
  Groups \textbf{7} (2002), no.~1, 51--60.

\bibitem[CK]{CK_mono}
Y.~Cho and Y.~Kim, \emph{Monotone {L}agrangians in flag varieties}, preprint,
  \href{https://arxiv.org/abs/1801.07554}{arXiv:1801.07554}.

\bibitem[CKO]{CKO}
Y.~Cho, Y.~Kim, and Y.-G. Oh, \emph{Lagrangian fibers of {G}elfand-{C}etlin
  systems}, preprint,
  \href{https://arxiv.org/abs/1704.07213}{arXiv:1704.07213}.

\bibitem[DLM04]{DMc}
J.~A. De~Loera and T.~B. McAllister, \emph{Vertices of {G}elfand-{T}setlin
  polytopes}, Discrete Comput. Geom. \textbf{32} (2004), no.~4, 459--470.

\bibitem[FOOO09]{FOOO}
K.~Fukaya, Y.-G. Oh, H.~Ohta, and K.~Ono, \emph{Lagrangian intersection {F}loer
  theory: anomaly and obstruction. {P}art {I}, {P}art {II}}, AMS/IP Studies in
  Advanced Mathematics, vol.~46, American Mathematical Society, Providence, RI;
  International Press, Somerville, MA, 2009.

\bibitem[FR]{FR}
H.~Flaschka and T.~Ratiu, \emph{A convexity theorem for {P}oisson actions of
  compact {L}ie groups}, IHES-preprint,
  \href{https://cds.cern.ch/record/284466?ln=en}{https://cds.cern.ch/record/284466?ln=en}.

\bibitem[GC50]{GC}
I.~M. Gel'fand and M.~L. Cetlin, \emph{Finite-dimensional representations of
  the group of unimodular matrices}, Doklady Akad. Nauk SSSR (N.S.) \textbf{71}
  (1950), 825--828.

\bibitem[GL96]{GL}
N.~Gonciulea and V.~Lakshmibai, \emph{Degenerations of flag and {S}chubert
  varieties to toric varieties}, Transform. Groups \textbf{1} (1996), no.~3,
  215--248.

\bibitem[GS83a]{GS_GC}
V.~Guillemin and S.~Sternberg, \emph{The {G}el'fand-{C}etlin system and
  quantization of the complex flag manifolds}, J. Funct. Anal. \textbf{52}
  (1983), no.~1, 106--128.

\bibitem[GS83b]{GS_Th}
\bysame, \emph{On collective complete integrability according to the method of
  {T}himm}, Ergodic Theory Dynam. Systems \textbf{3} (1983), no.~2, 219--230.

\bibitem[Kir04]{Ki}
A.~A. Kirillov, \emph{Lectures on the orbit method}, Graduate Studies in
  Mathematics, vol.~64, American Mathematical Society, Providence, RI, 2004.

\bibitem[KM05]{KM}
M.~Kogan and E.~Miller, \emph{Toric degeneration of {S}chubert varieties and
  {G}elfand-{T}setlin polytopes}, Adv. Math. \textbf{193} (2005), no.~1, 1--17.

\bibitem[Lan18a]{Lane1}
J.~Lane, \emph{Convexity and {T}himm's trick}, Transform. Groups \textbf{23}
  (2018), no.~4, 963--987.

\bibitem[Lan18b]{Lane2}
\bysame, \emph{The {G}eometric {S}tructure of {S}ymplectic {C}ontraction,},
  Int. Math. Res. Not. \textbf{23} (2018).

\bibitem[Lee13]{Leesmooth}
J.~M. Lee, \emph{Introduction to smooth manifolds}, second ed., Graduate Texts
  in Mathematics, vol. 218, Springer, New York, 2013.

\bibitem[Lit98]{Lit}
P.~Littelmann, \emph{Cones, crystals, and patterns}, Transform. Groups
  \textbf{3} (1998), no.~2, 145--179.

\bibitem[NNU10]{NNU1}
T.~Nishinou, Y.~Nohara, and K.~Ueda, \emph{Toric degenerations of
  {G}elfand-{C}etlin systems and potential functions}, Adv. Math. \textbf{224}
  (2010), no.~2, 648--706.

\bibitem[Pab]{Pab2}
M.~Pabiniak, \emph{Lower bounds for {G}romov width in the special orthogonal
  coadjoint orbits.}, preprint,
  \href{https://arxiv.org/abs/1201.0240}{arXiv:1201.0240}.

\bibitem[Pab14]{Pab}
\bysame, \emph{Gromov width of non-regular coadjoint orbits of {$U(n)$},
  {$SO(2n)$} and {$SO(2n+1)$}}, Math. Res. Lett. \textbf{21} (2014), no.~1,
  187--205.

\bibitem[Thi81]{Thimm}
A.~Thimm, \emph{Integrable geodesic flows on homogeneous spaces}, Ergodic
  Theory Dynamical Systems \textbf{1} (1981), no.~4, 495--517 (1982).

\bibitem[\v{Z}73]{Zelo}
D.~P. \v{Z}elobenko, \emph{Compact {L}ie groups and their representations},
  American Mathematical Society, Providence, R.I., 1973, Translated from the
  Russian by Israel Program for Scientific Translations, Translations of
  Mathematical Monographs, Vol. 40.

\end{thebibliography}
\end{document}